\numberwithin{equation}{section}
 \newtheorem{prop}{Proposition}[section]
\newtheorem{theorem}[prop]{Theorem}
\newtheorem{cor}[prop]{Corollary}
\newtheorem{lemma}[prop]{Lemma}
\theoremstyle{definition}
\newtheorem{Def}[prop]{Definition}
\newtheorem{rem}[prop]{Remark}
\def\Bbb{\mathbb}
\def\d{\partial}
\def\dist{\operatorname{dist}}
\def\R{\Bbb R}
\def\1{\mathds{1}}
\def\T{\Bbb T}
\def\N{\Bbb N}
\def\Dx{\Delta_x}
\def\Nx{\nabla_x}
\def\Dt{\partial_t}
\def\({\left(}
\def\){\right)}
\def\eb{\varepsilon}
\def\k{\kappa}
\def\Cal{\mathcal}
\def\a{\alpha}
\def\E{\mathcal{E}}
\begin{document}
\title[Quintic  wave equations]{Infinite energy solutions for weakly damped quintic
 wave equations in $\R^3$}
\author[Mei, Savostianov, Sun, and Zelik] {Xinyu Mei${}^1$, Anton Savostianov${}^2$,
 Chunyou Sun${}^1$, and  Sergey Zelik${}^{1,3}$}

\begin{abstract}
The paper gives a comprehensive study of infinite-energy solutions and their long-time behavior
 for semi-linear weakly damped wave equations in $\R^3$ with quintic nonlinearities. This study includes
  global well-posedness of the so-called Shatah-Struwe solutions, their dissipativity, the existence
  of a locally compact global attractors (in the uniformly local phase spaces) and their extra regularity.
\end{abstract}

\subjclass[2000]{35B40, 35B45, 35L70}

\keywords{damped wave equation, fractional damping, global attractor, unbounded domain, Strichartz estimates}
\thanks{This work is partially supported by  the RSF grant   19-71-30004  as well as  the EPSRC
grant EP/P024920/1 and NSFC grants No. 11471148, 11522109, 11871169.}

\address{${}^1$ School of Mathematics and Statistics, Lanzhou University,
Lanzhou \newline 730000,
P.R. China}
\email{meixy13@lzu.edu.cn}
\email{sunchy@lzu.edu.cn}

\address{${}^2$ Uppsala University, Uppsala, Department of Mathematics, Uppsala, 75106, Sweden.}
\email{anton.savostianov@math.uu.se
}
\address{${}^3$ University of Surrey, Department of Mathematics, Guildford, GU2 7XH, United Kingdom.}
\email{s.zelik@surrey.ac.uk}

\maketitle
\tableofcontents
\section{Introduction}
We study the following weakly damped wave equation:
\begin{equation}\label{eq.qdw}
\Dt^2 u+\gamma\Dt u+(1-\Dx)u+f(u)=g(t),\ \ \
\{u,\Dt u\}\big|_{t=0}=\{u_0,u_0'\}
\end{equation}
in a whole space $\R^3$.
Here $u(t,x)$ is the unknown function, $\Dx$ is the  Laplacian with respect to variable $x$,
 $\gamma$ is a  positive constant, $f:\R\to\R$ is a given non-linearity which is assumed to
 be of \emph{quintic} growth ($f(u)\sim u^5$) and to satisfy some natural conditions
 (stated in \eqref{4.f}) and $g$ belonging to the space $L^1_{loc}(\R_+,L^2_{loc}(\R^3))$ or its closed
  subspace $L^1_b(\R_+,L^2_b(\R^3))$, see Section \ref{s.sp} for definitions of key functional spaces.
\par
Dispersive or/and dissipative semilinear wave equations of the form \eqref{eq.qdw}  model various
 oscillatory processes
in many areas of  modern mathematical physics including  electrodynamics,
quantum mechanics, nonlinear elasticity,  etc. and are of a big permanent interest,
see \cite{lions,BV,Te,CV,straus,tao,sogge} and references therein.
\par
It is believed that the analytic properties and the dynamics as $t\to\infty$ of
solutions for damped wave equations \eqref{eq.qdw} strongly depend on the growth rate
of the non-linearity $f(u)$ as $u\to\infty$. Indeed, in the most studied case of cubic and
sub-cubic growth rate, the control of the energy norm which follows from the basic energy identity
is sufficient to get the well-posedness of the problem in a natural energy space,
dissipativity and further regularity of solutions as well as to develop the corresponding attractors
theory in both autonomous and non-autonomous cases as well as in bounded and unbounded domains,
see \cite{HR,BV,CV,hale,Lad,lions,MirZel,Te,ZCPAA2004} and references therein.
\par
We recall that the standard energy identity
\begin{multline}\label{0.energy}
 E(\xi_u(t))- E(\xi_u(\tau))=-\gamma\int_\tau^t\|\Dt u(s)\|^2_{L^2}\,ds+\\+
\int_\tau^t(\Dt u(s),g(s))\,ds, \ \ \xi_u(t):=\{u(t),\Dt u(t)\}
\end{multline}
can be formally obtained by multiplying equation \eqref{eq.qdw} by $\Dt u$
and integrating over $t$ and $x$. Here
$$
E(\xi_u):=\frac12\(\|\Dt u\|^2_{L^2}+\|\Nx u\|^2_{L^2}+\|u\|^2_{L^2}+2(F(u),1)\),
$$
$F(u):=\int_0^u f(z)\,dz$ and $(u,v):=\int_{\R^3}u(x)v(x)\,dx$.
This identity motivates the natural choice of the energy phase space and the class
of energy solutions (as the solutions for which the energy functional is finite) and also
 gives the control of the energy norm of the solution. Namely, if the non-linearity has a
 sub-quintic or quintic growth rate, due to the Sobolev embedding theorem
 $H^1\subset L^6$, the energy space is given by $\E:=H^1(\R^3)\times L^2(\R^3)$ and
 in the supercritical case $f(u)\sim u|u|^q$ with $q>4$, we need to take
  $\E:=(H^1(\R^3)\cap L^{q+2}(\R^3))\times L^2(\R^3)$
 in order to guarantee the finiteness of the energy functional.
\par
The case of super-cubic but sub-quintic growth rate ($2<q<4$) is a bit more complicated
 since the well-posedness of energy solutions is still an open problem here
  (at least in the case of bounded domains). However, this problem can be overcome using slightly
  more regular solutions than the energy ones for which, say, the mixed $L^4(0,T;L^{12}(\R^3))$
  space-time norm is finite for every $T>0$. These are  the so-called Shatah-Struwe
   (or Strichartz) solutions for which the well-posedness is known. The existence of such
   solutions is strongly based on the Strichartz estimates for the {\it linear} wave equation
     which are now available
    not only for the whole space $\R^3$ or the torus $\T^3$, but also for
    bounded domains with Dirichlet
    or Neumann boundary conditions, see \cite{Chem,Sogge2009,plan1,plan2,straus,St,tao}.
    Moreover, crucial for the attractor theory is the following energy-to-Strichartz estimate
    for such solutions
\begin{equation}\label{0.es}
\|u\|_{L^4(t,t+1;L^{12})}\le Q(\|\xi_u(t)\|_{\E})+Q(\|g\|_{L^1(t,t+1;L^2)}),
\end{equation}
where $Q$ is monotone increasing function which is independent of $t$ and the solution $u$.
 In the sub-quintic case this estimate is a straightforward corollary of the linear
 Strichartz estimate and perturbation arguments. Energy-to-Strichartz estimate \eqref{0.es}
 allows us to deduce the control and establish the dissipativity of $u$ in the Strichartz norm
 based on the standard energy estimate. Since the control of this norm is enough for the uniqueness,
 the obtained control gives the well-posedness, dissipativity and the existence of
 global/uniform attractors in the way which is similar to the clasical cubic case,
  see \cite{feireisl},\cite{kap4} and \cite{KSZ} for the case of $\R^3$, $\T^3$ and a
  bounded domain endowed with the Dirichlet boundary conditions respectively
   (see also \cite{SADE} for the case of damped wave equations with fractional damping).
\par
In contrast to this, very few is known about the solutions of \eqref{eq.qdw}
in the supercritical (superquintic) growth rate of the non-linearity $f$.
In this case the situation is somehow close to 3D Navier-Stokes problem, namely, we have the global
 existence of weak energy solutions for which the uniqueness is not known
  and the local existence of more regular solutions for which we do not know the
  global existence. It is expected that smooth solutions may blow up in finite time even in
  the defocusing case, but to the best of our knowledge there are no such examples. In this case
   the existing attractor theory  is related to multilavued semigroups or/and the so-called
   trajectory dynamical  systems and trajectory attractors,
   see \cite{CV,CV1,MirZel,ZelDCDS} (see also references therein).
\par
We now turn to the most interesting  borderline case of critical quintic non-linearity $f$ which is the
 main object of our study in this paper. In this case, the energy-to-Strichartz estimate \eqref{0.es} does
 not follow any more from the Strichartz estimate for
 the linear equation (at least in a straightforward way), so the proof of global existence for Shatah-Struwe
 solutions is usually based on the so-called non-concentration arguments and Pohozhaev-Morawetz
 equality, see \cite{Chem,Grill,kap1,kap2,kap3,SS1,SS,sogge,tao} (see also \cite{plan1,plan2}
for the case of bounded domains with Dirichlet or Neumann boundary conditions). This approach allows us
 to construct a Shatah-Struwe solution
 $u$ such that the $L^4(0,T;L^{12})$-norm is finite for all $T$, but does not allow to get any
  control of this norm through
the energy norm or to verify that the Strichartz norm does not grow as $T\to\infty$.
This is clearly not sufficient
 for the attractors. Indeed, without the uniform control of the Strichartz norm as $T\to\infty$,
 this extra regularity
 may a priori be lost in the limit and the attractor may contain the solutions which are less regular
than the Shatah-Struwe ones (for which we do not have the uniqueness theorem). Thus,  the uniform control
 of the Strichartz norm is crucial for the attractor theory.
\par
This problem has been overcome in \cite{KSZ} where the asymptotic regularity and existence of
global attractors
 for {\it autonomous} quintic wave equations in bounded domains of $\R^3$ has been established.
  The method suggested there is
heavily based on the existence of global Lyapunov function and on the related convergence of the
 trajectories to the set of equilibria and, by this reason cannot be extended to the
 non-autonomous case or to the case of infinite-energy solutions.
\par
An alternative method of verifying the asymptiotic smoothing property for the quintic
 wave equation \eqref{eq.qdw} has been recently suggested in \cite{SZ-UMN}. This method
  is based on a proper generalization of a direct energy-to-Stirichartz estimate for the
  model quintic wave equation
\begin{equation}\label{0.qmod}
 \Dt^2u-\Dx u+u^5=0
\end{equation}
 in $\R^3$ which in turn has been obtained earlier in \cite{BG1999} (see also \cite{tao1})
  via the profile decomposition  technique. This method allowed us (in \cite{SZ-UMN}) to build up more
   or less complete attractors theory for weakly damped quintic wave equation \eqref{eq.qdw}
    with {\it periodic} boundary conditions in both autonomous and non-autonomous cases. Note that
     this result cannot be extended to the case of Dirichlet or Neumann boundary conditions
      since the analogue of energy-to-Strichartz estimates for equation \eqref{0.qmod} is still an
       open problem for this case.
\par
In the present paper, which can be considered as a continuation of \cite{SZ-UMN}, we give a detailed
 study of the case where equation \eqref{eq.qdw} is considered in the whole space $x\in\R^3$. Note
  first of all that the finite-energy case $\xi_u(t)\in\E$ can be treated exactly as in \cite{SZ-UMN}
   and, by this reason, is not very interesting. The only difference is that,
   due to the non-compactness of Sobolev's embedding $H^1(\R^3)\subset L^2(\R^3)$, the sole
   asymptotic smoothing property will not give the asymptotic compactness (which is crucial for
    the existence of the attractor) and should be combined with the so-called tail estimates,
    see \cite{EZ2001,MirZel} and references therein for more details.
\par
However, the assumption that $\xi_u(t)\in\E$ is a big restriction since it assumes implicitly
that the solution $u(t,x)$ should decay sufficiently fast as $|x|\to\infty$, so many physically
 relevant solutions (such as homogeneous equilibria, space or space-time periodic  or/and quasi-periodic
 patterns as well as all solutions bifurcating from them) are automatically out of consideration.
 In addition, the extra conditions which we need to pose in order to get tail estimates are
  also restrictive and, in particular, for natural non-linearities like $f(u)=u|u|^q-\kappa u$,
   a global attractor in $\E$ does not exist if $\kappa>1$.
   \par
 By these reasons, it is natural, following \cite{BV1,F,MiS95,MirZel,Zhyp} (see also references therein),
 to consider {\it infinite} energy solutions for which $\xi_u(t)\in\E_{loc}$ only, in other words, only the
  restrictions of
 $\xi_u(t)$ to bounded domains should have finite energy and the total energy may be infinite.
 In this case the key energy equality makes no sense any more (the energy is infinite) and a
  number of extra difficulties arises. We note from the very beginning that these difficulties
   are not only technical, for instance, in contrast to the case of finite energy, the corresponding
    attractors usually have infinite Hausdorff and fractal dimensions and infinite topological entropy,
     so principally new types of limit dynamics appear, see \cite{Z-MMO} for more details.
\par
We will overcome the problem with infinite total energy by localizing the energy estimates using
 the machinery of weighted and uniformly local energy estimates, see \cite{ACD,EZ2001,Zhyp,MirZel}, as well as
  the  finite speed of propagation property which is the fundamental property of
  wave equations, see e.g., \cite{sogge},
   and which allows us to reduce the well-posedness result to the case of finite-energy solutions. This leads
    to our first main result.

\begin{theorem}\label{Th0.main1} Let the non-linearity $f$ satisfy some
 natural assumptions (see \eqref{4.f}),
$\xi_u(0)\in\E_{loc}$ and $g\in L^1_{loc}(\R_+,L^2_{loc}))$. Then, problem \eqref{eq.qdw} possesses
 a unique global Shatah-Struwe solution $u$ such that $\xi_u(t)\in \E_{loc}$ for all $t\ge0$ and, in addition,
 \begin{equation}
u\in L^4_{loc}(\R_+,L^{12}_{loc}).
 \end{equation}
\end{theorem}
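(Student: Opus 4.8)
The plan is to bypass the global energy identity---which is meaningless in the infinite-energy setting---by localizing in space and reducing the whole problem to the finite-energy Shatah-Struwe theory through the finite speed of propagation, exactly as announced in the introduction. The finite-energy case $\xi_u(t)\in\E$ is available (it can be treated as in \cite{SZ-UMN}, resting on the linear Strichartz estimates and on the energy-to-Strichartz estimate \eqref{0.es} for the model equation \eqref{0.qmod} established in \cite{BG1999}), and the point is to upgrade it to $\E_{loc}$.

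First I would fix an arbitrary $T>0$ and a point $x_0\in\R^3$ and produce a finite-energy building block. Choosing a smooth cut-off $\phi$ equal to $1$ on the ball $B(x_0,2T)$ and supported in $B(x_0,3T)$, I set $\xi_v(0):=\phi\,\xi_u(0)$ and localize the source to $\phi\,g$; since $\xi_u(0)\in\E_{loc}$ and $g\in L^1_{loc}(\R_+,L^2_{loc})$, the cut-off data lie in $\E$ and in $L^1(0,T;L^2)$ respectively. The finite-energy well-posedness then yields a unique Shatah-Struwe solution $v$ on $[0,T]$ with $\xi_v\in C([0,T];\E)$ and $v\in L^4(0,T;L^{12})$.

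The decisive step is to establish the unit-speed finite speed of propagation \emph{within the Shatah-Struwe class}: if $v_1,v_2$ are two such finite-energy solutions whose data agree on $B(x_0,R)$, then $v_1\equiv v_2$ on the backward cone $\{(t,x):|x-x_0|\le R-t,\ 0\le t\le R\}$. I would prove this by testing the equation for the difference $w=v_1-v_2$ with $\Dt w$ and integrating over truncated cones: the lateral flux term carries a favorable sign precisely because the propagation speed equals $1$, so it may be discarded, while the quintic difference $f(v_1)-f(v_2)$ is absorbed through the same Lipschitz-in-Strichartz-norm bound used for finite-energy uniqueness, and the lower-order contributions $\gamma\Dt w$ and $w$ are controlled by Gronwall's inequality. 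This localized energy identity is where the \textbf{main obstacle} lies, since it must be carried out in the low-regularity Shatah-Struwe class rather than for smooth solutions.

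Granting this, I would define $u$ by gluing: for $(t,x)$ in the cone over $B(x_0,T)$ set $u(t,x):=v(t,x)$, with $v$ the block associated with $x_0$. Two blocks centred at $x_0$ and $x_1$ have data agreeing on the overlap of their plateau regions, so by the previous step the corresponding cone solutions coincide on overlaps; hence $u$ is well defined, and being locally equal to a finite-energy Shatah-Struwe solution it inherits $\xi_u(t)\in\E_{loc}$ for $t\in[0,T]$ and $u\in L^4(0,T;L^{12}_{loc})$, the finiteness of the local energy on bounded time intervals being guaranteed by the uniformly local energy estimates of \cite{EZ2001,Zhyp,MirZel}. Since $T$ is arbitrary and the constructions for different $T$ agree on common domains (again by finite speed of propagation), letting $T\to\infty$ produces a global solution with $\xi_u(t)\in\E_{loc}$ for all $t\ge0$ and $u\in L^4_{loc}(\R_+,L^{12}_{loc})$. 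Uniqueness follows by the same localization: given two solutions with identical data, fixing any point and cutting off inside its backward cone forces both to coincide with the finite-energy solution determined by the localized data, so they agree everywhere.
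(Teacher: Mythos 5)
Your proposal is correct and follows the same architecture as the paper's proof of Theorem \ref{Th0.main1} (= Theorem \ref{Th4.wp}): localize the data, invoke the finite-energy Shatah--Struwe theory of \cite{SZ-UMN} (Proposition \ref{Prop4.fin}) for the building blocks, establish finite speed of propagation in the SS class, and glue over cones, with uniqueness coming from the same cone estimate. The one point where you genuinely diverge is precisely the step you flag as the main obstacle, and it is worth noting that the paper dissolves that obstacle rather than confronting it. You propose to derive the cone estimate by testing the difference equation with $\Dt w$ and integrating over truncated cones, which indeed requires justifying a localized energy--flux identity at the low Shatah--Struwe regularity. The paper instead observes (Remark \ref{Rem4.flin}, via \eqref{3.5int} and the growth bound \eqref{4.f}) that $f(u)\in L^1_{loc}(\R_+,L^2_{loc})$, so the difference $v=u_1-u_2$ solves the \emph{linear} damped wave equation with right-hand side in $L^1_{loc}(\R_+,L^2_{loc})$; the linear finite-speed estimate \eqref{3.cone} (Proposition \ref{Prop3.fin}, quoted from \cite{sogge}, where the flux computation is justified by density for the linear problem) then applies verbatim, and the quintic difference is absorbed exactly as in your sketch, via H\"older and $H^1\subset L^6$,
\begin{equation*}
\|f(u_1(s))-f(u_2(s))\|_{L^2(B^{R-s}_{x_0})}\le l(s)\,\|\xi_v(s)\|_{\E(B^{R-s}_{x_0})},\qquad
l(s)=C\bigl(1+\|u_1(s)\|^4_{L^{12}}+\|u_2(s)\|^4_{L^{12}}\bigr)\in L^1(0,R),
\end{equation*}
followed by Gronwall (Proposition \ref{Prop4.cone}). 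Your direct flux argument could be made rigorous by approximating the data and the forcing, but since the nonlinearity enters only as an integrable source, that approximation argument is exactly the proof of the linear estimate; reducing to \eqref{3.cone} is therefore both cleaner and shorter, and removes the only soft spot in your plan. A minor cosmetic difference: the paper extends the localized initial data from $\E(B^R_0)$ to $\E$ by an extension operator and extends $g$ by zero outside the cone, whereas you multiply by a cut-off; both work, since the glued solution is only read off inside the cone where the data agree.
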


We note that the local energy and Strichartz norms of $u$ can be estimated by the proper norms
 of the initial data and the external force $g$. However, these norms may grow in time if no
  extra assumptions on the growth of initial data and $g$ as $|x|\to\infty$ are posed, so we need
   to put extra restrictions if we want to speak about dissipativity and attractors. The natural
    choice of phase spaces for this is given by the so-called {\it uniformly local} phase spaces.
    The rigorous definitions of them will be given in Section \ref{s.sp} below and here we just
     mention that the uniformly local phase space $L^p_b(\R^3)$  consists of functions
      from $L^p_{loc}(\R^3)$ for which the following norm is finite:
      $$
      \|u\|_{L^p_b}:=\sup_{x_0\in\R^3}\|u\|_{L^p(B^1_{x_0})},
      $$
where $B^R_{x_0}$ stands for a ball of radius $R$ in $\R^3$ centered in $x_0$. The uniformly local version of
 Sobolev spaces and the energy space $\E_b$ are defined analogously.
\par
Our next result gives the dissipativity of the Shatah-Struwe solutions in uniformly local energy spaces.

\begin{theorem}\label{Th0.main2} Let the assumptions of Theorem \ref{Th0.main1} hold and let, in addition,
$\xi_u(0)\in\E_b$ and $g\in L^1_b(\R_+,L^{2}_b)$. Then the solution $u(t)$ constructed in Theorem
\ref{Th0.main1} belongs to $\E_b$ for all $t\ge0$ and possesses the following dissipative estimate:
\begin{equation}\label{0.dis}
\|\xi_u(t)\|_{\E_b}+\|u\|_{L^4(t,t+1;L^{12}_b)}\le
Q(\|\xi_u(0)\|_{\E_b})e^{-\beta t}+Q(\|g\|_{L^1_b(\R_+,L^2_b)}),
\end{equation}
where the positive constant $\beta$ and monotone function $Q$ are independent of $t$, $u$ and $g$.
\end{theorem}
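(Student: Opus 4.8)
The plan is to establish the dissipative estimate \eqref{0.dis} by combining the local well-posedness from Theorem \ref{Th0.main1} with a \emph{uniformly local energy estimate}, reducing the infinite-energy problem to controllable finite-energy pieces via finite speed of propagation. First I would fix a point $x_0\in\R^3$ and introduce a smooth weight function $\phi_{x_0}(x)$ concentrated near $x_0$ (for instance an exponentially decaying weight $\phi_{x_0}(x)=e^{-\eb|x-x_0|}$ with $\eb$ small but fixed, of the type used in the weighted energy machinery cited in \cite{EZ2001,Zhyp,MirZel}). Multiplying equation \eqref{eq.qdw} by $\phi_{x_0}^2(\Dt u+\alpha u)$ for a suitable small $\alpha>0$ and integrating over $x$ produces a \emph{weighted energy functional} $E_{\phi_{x_0}}(\xi_u(t))$ whose time derivative contains the good dissipative term $-\gamma\|\phi_{x_0}\Dt u\|^2$, a negative definite contribution $-\alpha(\dots)$ coming from the $\alpha u$ multiplier that controls the full $\E_b$-norm, the forcing term $(\phi_{x_0}^2 g,\Dt u+\alpha u)$, and error/commutator terms where derivatives fall on the weight $\phi_{x_0}$. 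The key structural point is that $|\Nx\phi_{x_0}|\le\eb\,\phi_{x_0}$, so all commutator terms carry a factor $\eb$ and can be absorbed into the good terms once $\eb$ is taken small relative to $\alpha$ and $\gamma$.

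Next I would convert this weighted differential inequality into an exponential decay estimate. After absorbing the $O(\eb)$ commutators, one arrives at an inequality of the form $\frac{d}{dt}E_{\phi_{x_0}}+\beta E_{\phi_{x_0}}\le C\|\phi_{x_0}g\|_{L^2}^2+(\text{Strichartz-controlled nonlinear terms})$, where the nonlinear contribution from $f(u)$ is handled using the quintic energy-to-Strichartz estimate \eqref{0.es}: the term $(\phi_{x_0}^2 f(u),\Dt u)$ reconstructs $\frac{d}{dt}(\phi_{x_0}^2 F(u),1)$ up to a commutator, while the $\alpha(\phi_{x_0}^2 f(u),u)$ term is controlled by the sign/dissipativity assumptions in \eqref{4.f} together with the $L^{12}_b$ bound on $u$. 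Applying Gronwall's inequality yields
\begin{equation}
E_{\phi_{x_0}}(\xi_u(t))\le E_{\phi_{x_0}}(\xi_u(0))e^{-\beta t}+C\sup_{s}\|\phi_{x_0}g(s)\|^2_{L^1(s,s+1;L^2)}+Q(\|g\|_{L^1_b}).
\end{equation}
Taking the supremum over $x_0\in\R^3$ and using that $\sup_{x_0}E_{\phi_{x_0}}(\xi_u(t))$ is equivalent to $\|\xi_u(t)\|^2_{\E_b}$ (a standard comparison between weighted and uniformly local norms, since the weights $\phi_{x_0}$ have uniformly bounded overlap) converts the pointwise-in-$x_0$ estimate into the uniformly local dissipative estimate for the energy norm.

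Finally, to obtain the Strichartz part $\|u\|_{L^4(t,t+1;L^{12}_b)}$ of \eqref{0.dis}, I would feed the just-established uniform energy bound into the energy-to-Strichartz estimate \eqref{0.es} applied on each unit ball $B^1_{x_0}$ and on each time window $[t,t+1]$; the finite speed of propagation guarantees that the Strichartz norm on $B^1_{x_0}\times[t,t+1]$ depends only on the data in a fixed enlarged ball, so the supremum over $x_0$ stays finite and inherits the decay and the $Q(\|g\|_{L^1_b})$ bound from the energy estimate.

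The main obstacle I anticipate is \emph{controlling the nonlinear term in the weighted setting}. Unlike the cubic case, the quintic term $f(u)$ is energy-critical, so the naive weighted energy identity does not close by Sobolev embedding alone — one genuinely needs the quintic energy-to-Strichartz estimate \eqref{0.es}, and one must verify that this estimate survives localization by the weight $\phi_{x_0}$ (equivalently, that the local Strichartz norm on each ball can be bounded uniformly in $x_0$). Making the weighted commutator errors truly small while simultaneously keeping the $\alpha u$-multiplier coercive enough to dominate both the weight errors and the sign-indefinite part of $(f(u),u)$ is the delicate balancing act; this is where the smallness of $\eb$ and the precise structural assumptions \eqref{4.f} on $f$ must be used carefully, and where finite speed of propagation is essential to keep all estimates uniform in $x_0$.
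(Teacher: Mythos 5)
Your overall architecture coincides with the paper's proof of Theorem \ref{Th4.dis}: the same multiplier $\phi_{\eb,x_0}^2(\Dt u+\delta u)$, the same absorption of $O(\eb)$ commutators, the supremum over $x_0$ via the equivalence \eqref{2.weightsup}, and the Strichartz part recovered afterwards from an energy-to-Strichartz bound. However, there is a genuine gap in how you close the weighted energy inequality. You assert that ``one genuinely needs the quintic energy-to-Strichartz estimate \eqref{0.es}'' to control the nonlinear terms, and your Gronwall inequality carries ``Strichartz-controlled nonlinear terms'' as a source. Since \eqref{0.es} bounds the Strichartz norm by $Q(\|\xi_u(t)\|_{\E})$ with an unspecified monotone $Q$, feeding it back into the differential inequality makes the source a superlinear function of the very quantity you are estimating; an inequality of the form $E'+\beta E\le Q(E)+C$ does not yield a uniform-in-time dissipative bound (it admits growing solutions), so as written the argument is circular. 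The paper's point --- emphasized right after the statement of Theorem \ref{Th0.main2} --- is that the energy half of \eqref{0.dis} closes \emph{without any Strichartz input}, using only the structure \eqref{4.f}: since the weight is time-independent, $(\phi_{\eb,x_0}^2 f(u),\Dt u)=\frac{d}{dt}(\phi_{\eb,x_0}^2,F(u))$ exactly (there is no commutator here, contrary to your claim), and the decomposition $f(u)=u^5+h(u)$ with $|h''(u)|\le C(1+|u|^q)$, $q<3$, gives pointwise bounds of the type $F(u)\ge -Ku^2$ and $f(u)u-F(u)\ge \tfrac12 u^6-C$, so every nonlinear term in the weighted identity is sign-favorable up to an additive constant that integrates against $\phi_{\eb,x_0}^2$ uniformly in $x_0$. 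This self-contained closure (carried out in detail in the proof of Proposition \ref{Lem5.v}) is precisely why the energy estimate \eqref{4.en-u} holds even for supercritical $f$, and it is what breaks the circularity.

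The Strichartz norm then enters only \emph{after} \eqref{4.en-u} is established, through the uniformly local energy-to-Strichartz estimate \eqref{4.es}. Note that in the critical quintic case \eqref{4.es} cannot be obtained by perturbatively localizing the linear Strichartz estimate; in the paper it is inherited, via the finite speed of propagation construction of Theorem \ref{Th4.wp} (estimate \eqref{4.infen} applied with initial time $t$), from the deep finite-energy result of Proposition \ref{Prop4.fin} proved in \cite{SZ-UMN} by Bahouri--G\'erard type arguments. Your third paragraph has the right shape here, but you should make this external input explicit rather than treating \eqref{0.es} as freely available in the localized setting. One further technical slip: your Gronwall source $C\|\phi_{x_0}g\|_{L^2}^2$ is incompatible with the hypothesis $g\in L^1_b(\R_+,L^2_b)$, since the square of an $L^1_b$-in-time function need not be locally integrable; the forcing must be kept linear, of the form $\|\phi_{\eb,x_0}g\|_{L^2}\,E^{1/2}$, as in \eqref{3.al-weight}.
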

The analogue of this estimate for the energy norm $\|\xi_u(t)\|_{\E_b}$ is well-known (see \cite{MS,Zhyp})
 and holds even in the case where $f$ has a super-critical growth rate, so the main novelty
 of \eqref{0.dis} is exactly the dissipative control of the Strichartz norm which is crucial
  for the uniqueness and attractors.
\par
We now turn to the attractors. For simplicity, we restrict ourselves to the autonomous case only
\begin{equation}\label{0.gaut}
g(t)\equiv g\in L^2_b(\R^3).
\end{equation}
In this case, thanks to Theorem \ref{Th0.main2}, the solution operators $S(t):\E_b\to\E_b$ defined via
$$
S(t)\xi_0=\xi_u(t), \ \xi_0\in\E_b,
$$
where $\xi_u(t)$ is a Shatah-Struwe solution of \eqref{eq.qdw} with the initial
condition $\xi_u\big|_{t=0}=\xi_0$ generate, a dissipative semigroup in the phase space $\E_b$ and we
 may speak about its global attractor. We recall that, in contrast to the case of bounded domains, a compact
  global attractor usually does not exists even in the simplest cases if we work in uniformly local spaces, so the so-called {\it locally}
   compact global attractor is used instead, see \cite{MirZel} and also Section \ref{s6} below. By definition, a locally
   compact global attractor is a bounded closed set in $\E_b$ which is compact in $\E_{loc}$ only, strictly
   invariant and attracts the images of bounded sets in $\E_b$ also in the topology of $\E_{loc}$ only.
\par
The next theorem can be considered as the third main result of the paper.

\begin{theorem}\label{Th0.main3} Let the assumptions of Theorem \ref{Th0.main2} hold and
 let, in addition, \eqref{0.gaut} is satisfied. Then, the solution semigroup $S(t):\E_b\to\E_b$ associated with
  equation \eqref{eq.qdw} possesses a locally compact global attractor $\Cal A$ in $\E_b$. This attractor
   is a bounded set of $\E^1_b:=H^2_b(\R^3)\times H^1_b(\R^3)$. Moreover, if the initial
   data $\xi_u(0)\in\E_b^1$, then $\xi_u(t)\in\E^1_b$ for all $t\ge0$ and the following estimate holds:
\begin{equation}\label{0.dis1}
\|\xi_u(t)\|_{\E_b^1}\le
Q(\|\xi_u(0)\|_{\E_b^1})e^{-\beta t}+Q(\|g\|_{L^2_b)}),
\end{equation}
where the positive constant $\beta$ and monotone function $Q$ are independent of $t$,
 $u$ and $g$. In other words,
problem \eqref{eq.qdw} is globally well-posed and dissipative in $\E^1_b$ as well.
\end{theorem}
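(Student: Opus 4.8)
The plan is to follow the now-standard scheme for attractors of critical damped wave equations, adapted to the uniformly local phase space, combining the dissipative control of Theorem \ref{Th0.main2} with an asymptotic smoothing estimate. By Theorem \ref{Th0.main2} the semigroup $S(t)$ possesses a bounded absorbing set $\B\subset\E_b$ on which, crucially, the Strichartz norm $\|u\|_{L^4(t,t+1;L^{12}_b)}$ is uniformly bounded. Once the semigroup is known to be continuous in the local topology $\E_{loc}$ (which follows from the well-posedness of Theorem \ref{Th0.main1} together with finite speed of propagation), the existence of the locally compact attractor $\A$ reduces, via the abstract theory of \cite{MirZel}, to verifying asymptotic compactness in $\E_{loc}$, and the regularity claim reduces to an $\E^1_b$-bound for $\A$. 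Thus everything hinges on an asymptotic smoothing estimate.

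First I would establish this smoothing. Writing $u=v+w$, where $v$ solves the linear homogeneous damped wave equation with data $\xi_u(0)$ and hence decays exponentially in $\E_b$ (the mass term $+u$ together with $\gamma>0$ makes the linear semigroup exponentially stable), the asymptotics are governed by the remainder $w$ solving the equation with right-hand side $g-f(u)$ and zero data. To bound the relevant quantities in $\E^1_b$ I would differentiate the equation in time and study $\theta:=\Dt u$, which in the autonomous case $\Dt g\equiv 0$ solves the linear-in-$\theta$ equation $\Dt^2\theta+\gamma\Dt\theta+(1-\Dx)\theta+f'(u)\theta=0$. The point is that $f'(u)\sim u^4$ is, thanks to Theorem \ref{Th0.main2}, controlled in the scale-invariant space $L^1(L^3_b)$: indeed $\|f'(u)\|_{L^1(t,t+1;L^3_b)}\lesssim\|u\|^4_{L^4(t,t+1;L^{12}_b)}$, with a bound uniform on $\B$. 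This places the linearized equation for $\theta$ within reach of the very Strichartz/energy machinery used for the original equation, so that an energy-to-Strichartz estimate for $\theta$ analogous to \eqref{0.es} can be run. Feeding the resulting $\E_b$-bound for $\xi_\theta=\{\Dt u,\Dt^2 u\}$ back into the elliptic identity $(1-\Dx)u=-\Dt^2u-\gamma\Dt u-f(u)+g$ and using the Strichartz integrability of $f(u)$ then recovers the missing $H^2_b$-regularity of $u$. Throughout, finite speed of propagation is used to localize to balls and thereby pass from $\E$-estimates to uniformly local $\E_b$-estimates, exactly as in Theorems \ref{Th0.main1}--\ref{Th0.main2}.

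With the smoothing in hand, asymptotic compactness in $\E_{loc}$ follows by combining the exponential decay of $v$ with the $\E^1_b$-boundedness of the remainder and the fact that $H^2_{loc}\times H^1_{loc}$ is compactly embedded into $\E_{loc}=H^1_{loc}\times L^2_{loc}$ by Rellich's theorem. The locally compact global attractor $\A$ is then obtained as the $\omega$-limit set of $\B$ in the $\E_{loc}$-topology; strict invariance and the attraction property in $\E_{loc}$ are standard once dissipativity, asymptotic compactness, and continuity of $S(t)$ are established, and the $\E^1_b$-boundedness of $\A$ is inherited directly from the smoothing estimate. Finally, the forward invariance of $\E^1_b$ and the dissipative estimate \eqref{0.dis1} follow by repeating the time-derivative energy estimate above, this time retaining the dependence on the $\E^1_b$-norm of the initial data rather than discarding it.

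The main obstacle is the smoothing step for the linearized equation. In the critical quintic case $f(u)\sim u^5$ lies only in $L^{6/5}$ when $u\in H^1\subset L^6$, so naive elliptic regularity does not deliver $H^2_b$, and the whole argument genuinely requires the dissipative Strichartz control furnished by Theorem \ref{Th0.main2}. Even granting $f'(u)\in L^1(L^3)$, a crude Gronwall argument on the energy of $\theta$ produces an exponent proportional to the Strichartz norm on $\B$, which may exceed the linear damping rate $\beta$ and thus fail to give dissipativity; closing the estimate therefore demands a more careful treatment — splitting into short time intervals on which the Strichartz norm of $u$ is small (so that $f'(u)$ acts as a genuinely small perturbation), and then summing these contributions against the accumulating exponential decay of the damped linear semigroup. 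Making this competition quantitative while keeping all constants uniform in $x_0\in\R^3$ via finite speed of propagation is the delicate heart of the proof.
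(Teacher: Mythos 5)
Your overall architecture (absorbing set from Theorem \ref{Th0.main2}, continuity of $S(t)$ in $\E_{loc}$ via finite speed of propagation, attractor through a compact-in-$\E_{loc}$ attracting set) matches the paper, but the core smoothing step is wrong in a way that cannot be repaired within your scheme. Differentiating in time and studying $\theta=\Dt u$ only \emph{propagates} $\E^1_b$-regularity; it cannot \emph{create} it. To start the energy (or energy-to-Strichartz) estimate for $\theta$ you need $\xi_\theta(t_0)=\{\Dt u(t_0),\Dt^2u(t_0)\}\in\E_b$, i.e.\ $\Dt u(t_0)\in H^1_b$ — but for data merely in $\E_b$ you only know $\Dt u(t_0)\in L^2_b$ (and $\Dt^2 u(t_0)\in H^{-1}_b$-type spaces), and the wave equation has no parabolic smoothing to upgrade this at positive times. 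The same issue defeats the variant where you differentiate the equation for the zero-data remainder $w$: there $\Dt^2 w(0)=g-f(u_0)$ lies only in $L^{6/5}_b$ since $u_0\in H^1_b\subset L^6_b$. Your elliptic feedback step is also stuck exactly at criticality: pointwise in time one only has $f(u(t))\in L^{6/5}$, and $(1-\Dx)^{-1}:L^{6/5}\to W^{2,6/5}\hookrightarrow L^6$ returns precisely the information you started with, with no gain. So your argument proves the last assertion of the theorem (well-posedness and the dissipative estimate \eqref{0.dis1} for initial data already in $\E^1_b$), but leaves the $\E^1_b$-boundedness of the attractor — the asymptotic smoothing from $\E_b$ — unproven. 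A further, secondary problem: your plan to decompose time into short intervals where $\|u\|_{L^4(t,t+\tau;L^{12}_b)}$ is small is not uniform in $t$ (a bounded $L^4_b$-in-time Strichartz norm can concentrate its mass on arbitrarily short windows), so the perturbative Gronwall cannot be closed that way.

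The paper's route is structurally different and is designed exactly to manufacture regularity from rough data: it gains \emph{fractional} smoothness first. One splits $u=v+w$ where $v$ solves the \emph{nonlinear} equation \eqref{5.eq.v} (with an added term $Lv$, $L$ large, to force exponential decay of $\xi_v$ in $\E_b$ despite the sign of $f$), and $w$ has zero data and right-hand side involving $f(u)-f(v)$. The difference $f(u)-f(v)$ is estimated in $H^\alpha$, $\alpha\in(0,\tfrac25]$, by the localized Kato--Ponce-type inequality \eqref{A.mf}, which bounds it by $\|\psi_{R,x_0}w\|_{H^{1+\alpha}}^{1-\alpha}\|\psi_{R,x_0}w\|_{H^{\alpha,12}}^{\alpha}$ times energy/Strichartz norms of $u,v$; feeding this into the $H^\alpha$-level linear Strichartz estimate \eqref{lin.est} and a Gronwall argument yields $w$ bounded in $\E^\alpha_b$. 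Closing this Gronwall dissipatively requires the paper's key new device (Lemma \ref{Lem8.usplit}, Remark \ref{Rem5.igo-go}): a hybrid decomposition $u=\tilde v+\tilde w$ into a part $\tilde v$ that is \emph{small in norm} (not merely in mean or on short intervals) plus a part $\tilde w$ bounded in $\E^\alpha_b$, built piecewise from shifted copies of the $(v_n,w_n)$ splittings; smallness in amplitude, combined with choosing the cut-off radius $R$ large to kill the nonlocal commutator term $CR^{-1}\int e^{-\beta(t-s)}\|\xi_w(s)\|_{\E^\alpha_{b,2R}}\,ds$, makes the Gronwall coefficient small uniformly in $x_0$. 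Only after this fractional gain does a \emph{linear} splitting like yours work: for $\alpha>\tfrac18$ one has $\|f(u)\|_{L^1(t,t+1;H^1_b)}\le C\bigl(1+\|u\|^4_{L^4(t,t+1;H^{\alpha,12}_b)}\bigr)\|\xi_u\|_{L^\infty(t,t+1;\E^\alpha_b)}$ (estimate \eqref{6.int}), and the $\alpha=1$ Strichartz estimate then delivers $\E^1_b$. In short: you are missing the intermediate fractional bootstrap $\E_b\to\E^\alpha_b$ and the small-plus-regular splitting that makes it dissipative; without these your scheme establishes only propagation, not asymptotic smoothing.
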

As usual, the proof of this theorem is based on a decomposition of a solution $u(t)=v(t)+w(t)$, where $v(t)$
 is exponentially decaying and $w(t)$ is more regular and bootstrapping arguments. Similarly to \cite{SZ-UMN},
 we establish the extra regularity $w(t)\in\E^{\alpha}_b$ with $\alpha\in(0,\frac25]$ at the first step.
  And jump from
  $\E^{\alpha}_b$ to $\E_b^1$ at the second step. Although our proof follows in general
   the scheme suggested in
   \cite{SZ-UMN}, there are essential new difficulties here related with localization of
    Kato-Ponce type inequalities and the old scheme does not work directly. To overcome this difficulty, we
    introduce a new scheme of splitting $u(t)=\tilde v(t)+\tilde w(t)$ of the solution $u$ into a small and regular
     components which has an independent interest, see Remark \ref{Rem5.igo-go} for the details.
\par
The paper is organized as follows.
\par
 Section \ref{s.sp} gives an overview of weighted and uniformly
 local Sobolev spaces which are used in the paper. A special attention is paid to the localization of
 fractional Lebesgue-Sobolev spaces (=Bessel potential spaces) which are necessary for estimating
  the fractional norms of the differences $f(u)-f(v)$ via the Kato-Ponce inequality. Some commutator
   estimates which are necessary to treat these spaces are proved in Appendix \ref{sA}.
\par
The energy and Strichartz estimates for the linear equation \eqref{eq.qdw} (with $f=0$) which are
necessary for our study of the non-linear case are collected in Section \ref{s.lin}.
\par
Well-posedness and dissipativity of  quintic wave equation \eqref{eq.qdw} is studied in Section \ref{s4}.
The proofs of Theorems \ref{Th0.main1} and \ref{Th0.main2} are also given there.
\par
Decomposition of a solution $\xi_u(t)\in\E_b$ into exponentially decaying and more regular
 (bounded in $\E^\alpha_b$, $\alpha\le\frac25$) parts is verified in Section \ref{s5}. This is the
  most difficult part in the proof of Theorem \ref{Th0.main3}. Some estimates for the fractional norms
  of the difference $f(u)-f(v)$ are collected in Appendix \ref{sB}.
\par
Finally, the existence and $\E^1_b$ regularity of a locally compact global attractor for the considered equation \eqref{eq.qdw}
 is established in Section \ref{s6}. At the end of this section we also discuss briefly some corollaries
  of the proved Theorem \ref{Th0.main3} as well as its possible generalizations including entropy estimates, exponential attractors
   and extensions to the non-autonomous case.

\section{Weighted and uniformly local spaces}
\label{s.sp}

In this section we introduce a family of weighted and uniformly local  Sobolev spaces
which will be used throughout of the paper and briefly discuss useful relations between them,
 see e.g. \cite{EZ2001,MirZel,ZCPAM}
for more detailed exposition. We start by introducing the class of admissible weight functions and the
 corresponding weighted Lebesgue spaces.

\begin{Def}\label{def fsexpg}
Let $\mu >0$ be arbitrary. A function $\phi\in L^\infty_{loc}(\R^n)$ to be called a
weight function of exponential growth $\mu$ iff $\phi(x)>0$ and there holds inequality
\begin{equation} \label{2.1}
\phi(x+y)\leq C_\phi e^{\mu |y|}\phi(x),
\end{equation}
for every $x,y\in\R^n$. Let $\phi$ be a weight of an exponential growth. Then the norm in the weighted Lebesgue space
$L^p_\phi(\R^n)$, $1\le p\le\infty$ is defined via
\begin{equation}
\|u\|_{L^p_\phi}:=\(\int_{\R^n}\phi^p(x)|u(x)|^p\,dx\)^{1/p}.
\end{equation}
The uniformly local analogue $L^p_{b,\phi}(\R^n)$ is defined by the following norm:
\begin{equation}
\|u\|_{L^p_{b,\phi}}:=\sup_{x_0\in\R^n}\left\{\phi(x_0)\|u\|_{L^p(B^1_{x_0})}\right\},
\end{equation}
where $B^R_{x_0}$ stands for a ball of radius $R$ in $\R^n$ centered at $x_0$. We will
 write $L^p_b$ instead of $L^p_{b,1}$. The Sobolev spaces $W^{l,p}_\phi(\R^n)$ (resp. $W^{l,p}_{b,\phi}(\R^n)$)
  for $l\in\Bbb N$ are defined as spaces of distributions whose derivatives up to order $l$
  belong to $L^p_\phi(\R^n)$ (resp. $L^p_{b,\phi}(\R^n)$).
\end{Def}
\begin{rem}\label{rem 2.1}
One can easily check that if function $\phi$ is of exponential growth $\mu$
then so is the function $1/\phi$ with the same constant $C_\phi$. In other words \eqref{2.1} implies
\begin{equation}
\phi(x+y)\geq C_\phi^{-1}e^{-\mu|x|}\phi(y),
\end{equation}
for every $x,y \in\R^n$. It is also not difficult to see that a sum and a product of two weights of
 exponential growth is also a weight of an exponential growth, see \cite{EZ2001} for details.
\end{rem}

The key examples of weight functions of exponential growth are
$e^{-\eb|x-x_0|}$, its smooth analogue $e^{-\eb\sqrt{1+|x-x_0|^2}}$ and
 $(1+|x-x_0|^2)^\a$ where $\eb$ and $\a$ belong to $\R$.
 It is easy to see that the first two examples are functions of exponential growth
 $|\eb|$ and the last one is the weight function of exponential growth $\mu$ for
 arbitrary $\mu>0$. In particular,  the weights   $\phi_{\eb,x_0}(x)=e^{-\eb\sqrt{1+|x-x_0|^2}}$ possess
 an extra important property
 \begin{equation}\label{2.dw}
|D^k_x\phi_{\eb,x_0}(x)|\le C_k\eb^k\phi_{\eb,x_0}(x),\ x,x_0\in\R^n,
 \end{equation}
where $D^k_x$ stands for a collection of all partial derivatives of order
$k$ and the constant $C_k$ depends only on $k$. This property allows us to reduce the
 study of weighted spaces to non-weighted ones. Indeed, let us define the multiplication operator:
\begin{equation}
T_{\phi_{\eb,x_0}}u:=\phi_{\eb,x_0}u.
\end{equation}
Then, as a corollary of \eqref{2.dw}, we get the following result, see \cite{MirZel}.
\begin{prop}\label{Lem2.T} The operator $T_{\phi_{\eb,x_0}}$ realizes isomorphisms
 between the non-weighted space $W^{l,p}(\R^n)$ and its weighted analogue
  $W^{l,p}_{\phi_{\eb,x_0}}(\R^n)$ for any $l\in\Bbb N$. Moreover,
  $$
  \|T_{\phi_{\eb,x_0}}\|_{\mathcal L(W^{l,p}_{\phi_{\eb,x_0}},W^{l,p})}+
  \|T^{-1}_{\phi_{\eb,x_0}}\|_{\mathcal L(W^{l,p}, W^{l,p}_{\phi_{\eb,x_0}}, )}\le C_{l,p},
$$
where the constant $C_{l,p}$ is independent of $x_0$ and $\eb$ such that $|\eb|\le1$. The
 analogous result holds also for the spaces $W^{l,p}_{b,\phi_{\eb,x_0}}$ and $W^{l,p}_b$.
\end{prop}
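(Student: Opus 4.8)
The plan is to establish the isomorphism by reducing weighted estimates to unweighted ones via the explicit pointwise derivative control \eqref{2.dw}, treating the two directions $T_{\phi_{\eb,x_0}}$ and $T^{-1}_{\phi_{\eb,x_0}}$ symmetrically. First I would observe that, since $\phi_{\eb,x_0}$ is strictly positive and locally bounded with a locally bounded inverse, multiplication by $\phi_{\eb,x_0}$ maps $L^p_{\phi_{\eb,x_0}}$ isometrically onto $L^p$: indeed $\|T_{\phi_{\eb,x_0}}u\|_{L^p}=\|\phi_{\eb,x_0}u\|_{L^p}=\|u\|_{L^p_{\phi_{\eb,x_0}}}$ directly from the definition of the weighted norm. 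This disposes of the case $l=0$ with constant exactly $1$, and the substance of the proof is to propagate this to the derivatives.

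For $l\ge 1$, I would compute $D^{\alpha}_x(\phi_{\eb,x_0}u)$ by the Leibniz rule and expand it as a sum over subderivatives, obtaining schematically
\begin{equation}
D^{\alpha}_x(\phi_{\eb,x_0}u)=\sum_{\beta\le\alpha}\binom{\alpha}{\beta}(D^{\beta}_x\phi_{\eb,x_0})(D^{\alpha-\beta}_x u).
\end{equation}
The key is that every factor $D^{\beta}_x\phi_{\eb,x_0}$ with $|\beta|=k$ is pointwise controlled by \eqref{2.dw}, namely $|D^{\beta}_x\phi_{\eb,x_0}(x)|\le C_k\eb^k\phi_{\eb,x_0}(x)$, so that in $L^p$ each summand is bounded by $C_k\eb^k\,\|\phi_{\eb,x_0}\,D^{\alpha-\beta}_x u\|_{L^p}=C_k\eb^k\,\|D^{\alpha-\beta}_x u\|_{L^p_{\phi_{\eb,x_0}}}\le C_k\eb^k\|u\|_{W^{l,p}_{\phi_{\eb,x_0}}}$. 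Summing the finitely many terms (with $\eb$-powers bounded for $|\eb|\le1$) yields $\|T_{\phi_{\eb,x_0}}u\|_{W^{l,p}}\le C_{l,p}\|u\|_{W^{l,p}_{\phi_{\eb,x_0}}}$ with a constant depending only on $l,p$ and the structural constants $C_k$, hence independent of $x_0$ and of $\eb$ in the stated range. For the inverse direction I would use Remark \ref{rem 2.1}: since $1/\phi_{\eb,x_0}$ is again a weight of exponential growth $|\eb|$ and, crucially, $\phi_{\eb,x_0}^{-1}=e^{\eb\sqrt{1+|x-x_0|^2}}$ satisfies the same type of bound \eqref{2.dw} with its own constants $C_k$, the identical Leibniz expansion applied to $D^{\alpha}_x(\phi_{\eb,x_0}^{-1}v)$ gives $\|T^{-1}_{\phi_{\eb,x_0}}v\|_{W^{l,p}_{\phi_{\eb,x_0}}}\le C_{l,p}\|v\|_{W^{l,p}}$.

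Finally, for the uniformly local spaces $W^{l,p}_{b,\phi_{\eb,x_0}}$ I would localize: the weighted-$b$ norm is a supremum over balls $B^1_{y_0}$ of the local weighted norms, so the same Leibniz-plus-\eqref{2.dw} estimate, applied on each unit ball and then taking the supremum over $y_0$, transfers the bound. Here I would use that on a fixed unit ball the oscillation of the weight $\phi_{\eb,x_0}$ is controlled by $C_\phi e^{\mu}$ via \eqref{2.1}, so the sup-over-balls structure is preserved under multiplication with $x_0$- and $\eb$-independent constants. The main obstacle, and the only place requiring genuine care, is the \emph{uniformity} of the constant $C_{l,p}$ in $x_0$ and $\eb$: this is precisely what \eqref{2.dw} is designed to deliver, since the bound $|D^{\beta}_x\phi_{\eb,x_0}|\le C_k\eb^k\phi_{\eb,x_0}$ has a constant $C_k$ that is manifestly independent of $x_0$ (by translation) and yields $\eb$-powers that stay bounded once $|\eb|\le1$; without this homogeneous structure the constants would degenerate as $\eb\to0$ or as $x_0$ varies. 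I expect no difficulty beyond bookkeeping the finitely many Leibniz terms.
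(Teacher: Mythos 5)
Your proof is correct and follows exactly the route the paper intends: the paper presents this proposition as a direct corollary of \eqref{2.dw} (citing the literature rather than writing out details), and that argument is precisely your Leibniz expansion combined with the pointwise bounds $|D^k_x\phi_{\eb,x_0}|\le C_k|\eb|^k\phi_{\eb,x_0}$, with the inverse handled via $\phi_{\eb,x_0}^{-1}=\phi_{-\eb,x_0}$ and the uniformly local case via the bounded oscillation of the weight on unit balls from \eqref{2.1}. Your bookkeeping of the uniformity in $x_0$ and $|\eb|\le1$ is exactly the point of \eqref{2.dw}, so nothing is missing.
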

The next standard proposition gives more convenient equivalent norms in weighted and uniformly local spaces.
\begin{prop}\label{Prop2.eq} Let $\phi(x)$ be a weight function of an exponential growth rate $\mu$
and let $\eb>\mu$. Then,
for every $u\in L^p_\phi(\R^n)$, $1\le p<\infty$, the following estimate holds:
\begin{equation}\label{2.weightint}
C_1\|u\|^p_{L^p_{\phi}}\le \int_{\R^n}\phi(x_0)^p \|u\|^p_{L^p_{\phi_{\eb,x_0}}}\,dx_0\le
C_2\|u\|^p_{L^p_{\phi}},
\end{equation}
where the constants $C_1$ and $C_2$ depend only on $\mu$ and $\eb$ and are independent of $u$ and $\phi$.
Analogously, for every $u\in L^p_{b,\phi}(\R^n)$, we have
\begin{equation}\label{2.weightsup}
C_1\|u\|_{L^p_{b,\phi}}\le \sup_{x_0\in\R^n}\left\{\phi(x_0) \|u\|_{L^p_{\phi_{\eb,x_0}}}\right\}\le
C_2\|u\|_{L^p_{b,\phi}}.
\end{equation}
\end{prop}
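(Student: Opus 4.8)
The plan is to establish the two-sided bounds for the weighted integral norm \eqref{2.weightint} first, and then extract the uniformly local version \eqref{2.weightsup} by the same pointwise estimates with the integral replaced by a supremum. The key observation is that the integral $\int_{\R^n}\phi(x_0)^p\|u\|^p_{L^p_{\phi_{\eb,x_0}}}\,dx_0$ is, after unfolding the definition of the weighted norm, a double integral in $x_0$ and $x$, and the whole proof reduces to a Fubini interchange followed by pointwise control of the inner integral $\int_{\R^n}\phi(x_0)^p\phi_{\eb,x_0}(x)^p\,dx_0$ by a constant multiple of $\phi(x)^p$. First I would write out
\begin{equation}
\int_{\R^n}\phi(x_0)^p\|u\|^p_{L^p_{\phi_{\eb,x_0}}}\,dx_0
=\int_{\R^n}|u(x)|^p\(\int_{\R^n}\phi(x_0)^p\,e^{-\eb p\sqrt{1+|x-x_0|^2}}\,dx_0\)dx,
\end{equation}
and then show that the inner integral is comparable to $\phi(x)^p$, uniformly in $x$.

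The heart of the matter is the comparison of $\phi(x_0)$ with $\phi(x)$ under the exponential-growth hypothesis. By Definition \ref{def fsexpg} and Remark \ref{rem 2.1} we have the two-sided control $C_\phi^{-1}e^{-\mu|x-x_0|}\phi(x)\le\phi(x_0)\le C_\phi e^{\mu|x-x_0|}\phi(x)$. Substituting the upper bound into the inner integral gives
\begin{equation}
\int_{\R^n}\phi(x_0)^p\,e^{-\eb p\sqrt{1+|x-x_0|^2}}\,dx_0
\le C_\phi^p\,\phi(x)^p\int_{\R^n}e^{\mu p|x-x_0|}e^{-\eb p\sqrt{1+|x-x_0|^2}}\,dx_0,
\end{equation}
and since $\eb>\mu$, the change of variable $y=x-x_0$ reduces the remaining integral to $\int_{\R^n}e^{(\mu-\eb)p|y|}\,dy$ (up to harmless factors coming from the $\sqrt{1+|y|^2}$ versus $|y|$ discrepancy), which converges to a finite constant $C_2$ depending only on $\mu$, $\eb$, $p$ and $n$. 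The lower bound $C_1$ is obtained analogously by inserting $\phi(x_0)\ge C_\phi^{-1}e^{-\mu|x-x_0|}\phi(x)$ and noting that $\int_{\R^n}e^{-(\mu+\eb)p|y|}\,dy$ is a strictly positive finite constant. Multiplying through by $|u(x)|^p$ and integrating in $x$ yields \eqref{2.weightint}.

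For \eqref{2.weightsup} I would argue by essentially the same pointwise comparison, now estimating $\phi(x_0)\|u\|_{L^p_{\phi_{\eb,x_0}}}$ directly. The upper bound follows by splitting $\R^n$ into the unit balls $B^1_{x_j}$ of a fixed locally finite cover and summing the local contributions $\|u\|^p_{L^p(B^1_{x_j})}$ weighted by the decaying factor $e^{-(\eb-\mu)p|x_0-x_j|}$, which is summable precisely because $\eb>\mu$; the lower bound is immediate since $\phi_{\eb,x_0}$ is bounded below by a positive constant on $B^1_{x_0}$. I expect the main technical obstacle to be the bookkeeping in the uniformly local case: one must replace the clean integral convergence argument by a summability estimate over a cover and verify that all constants are genuinely independent of the weight $\phi$ and of $u$, which is where the uniformity of $C_\phi$ across translations (the exponential-growth condition \eqref{2.1}) is used in an essential way. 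The weighted integral estimate \eqref{2.weightint} is comparatively routine once Fubini and the condition $\eb>\mu$ are in place.
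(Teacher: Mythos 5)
Your proof is correct and follows essentially the same route as the paper's: the paper defers the proof of Proposition \ref{Prop2.eq} to \cite{EZ2001,ZCPAM}, but the Minkowski/Fubini computation it displays right after Corollary \ref{th wsp1} is precisely your argument (of which the integral bound \eqref{2.weightint} is the case $q=1$), namely Tonelli plus the two-sided comparison $C_\phi^{-1}e^{-\mu|x-x_0|}\phi(x)\le\phi(x_0)\le C_\phi e^{\mu|x-x_0|}\phi(x)$ and the convergence of $\int_{\R^n}e^{(\mu-\eb)p|y|}\,dy$ for $\eb>\mu$. Your lattice-summation argument for the uniformly local bound \eqref{2.weightsup}, and your implicit observation that the constants genuinely depend on $C_\phi$ from \eqref{2.1} (as the paper itself concedes in Corollary \ref{th wsp1}), are likewise correct.
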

The proof of these estimates can be found in \cite{EZ2001,ZCPAM}.
\par
Proposition \ref{Prop2.eq} gives us a machinery for verifying various regularity estimates for
 linear PDEs by reducing them to the analogous non-weighted ones. We illustrate it on the following classical
 example:
 \begin{equation}\label{2.lap}
(1-\Delta)u(x)=g(x),\ \ x\in\R^n.
 \end{equation}
\begin{cor}\label{Cor2.reg} Let $\phi$ be a weight function of sufficiently small exponential growth
 $\mu$ ($\mu\le\mu_0\ll1$) and let $g\in L^p_\phi(\R^n)$ for some $1<p<\infty$. Then equation \eqref{2.lap} possesses
  a unique solution $u\in W^{2,p}_\phi(\R^n)$ and the following estimate holds:
  \begin{equation}\label{2.regweight}
  \|u\|_{W^{2,p}_\phi}\le C_p\|g\|_{L^p_\phi},
  \end{equation}
where the constant $C$ depends on $p$ and on the constant $C$ from inequality \eqref{2.1}. Analogously, if
$g\in L^p_{b,\phi}(\R^n)$ then the solution $u\in W^{2,p}_{b,\phi}(\R^n)$ and
  \begin{equation}\label{2.regul}
  \|u\|_{W^{2,p}_{b,\phi}}\le C_p\|g\|_{L^p_{b,\phi}}.
  \end{equation}
\end{cor}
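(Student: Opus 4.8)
The plan is to use the localization machinery of Propositions \ref{Lem2.T} and \ref{Prop2.eq} to reduce the weighted estimate to the classical non-weighted elliptic regularity for $1-\Delta$, which I take for granted: for $1<p<\infty$ the resolvent $(1-\Delta)^{-1}$ maps $L^p(\R^n)$ isomorphically onto $W^{2,p}(\R^n)$ (a Mikhlin-multiplier computation on the symbols $(1+|\xi|^2)^{-1}$ and $\xi_i\xi_j(1+|\xi|^2)^{-1}$), so that $\|v\|_{W^{2,p}}\le C_p\|(1-\Delta)v\|_{L^p}$ for every $v\in W^{2,p}(\R^n)$.

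First I would derive a uniform \emph{local} estimate. Fix $x_0$, abbreviate $\phi_0:=\phi_{\eb,x_0}$, and set $v:=T_{\phi_0}u=\phi_0 u$, which lies in $W^{2,p}(\R^n)$ by Proposition \ref{Lem2.T}. A direct commutator computation using $(1-\Delta)u=g$ gives $(1-\Delta)(\phi_0 u)=\phi_0 g-2\nabla\phi_0\cdot\nabla u-(\Delta\phi_0)u$. By the derivative bound \eqref{2.dw} one has $|\nabla\phi_0|\le C\eb\,\phi_0$ and $|\Delta\phi_0|\le C\eb^2\phi_0$, so the two commutator terms are controlled in $L^p$ by $C\eb\|\phi_0\nabla u\|_{L^p}+C\eb^2\|\phi_0 u\|_{L^p}$. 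Writing $\phi_0\nabla u=\nabla v-(\nabla\phi_0)u$ to get $\|\phi_0\nabla u\|_{L^p}\le\|v\|_{W^{1,p}}+C\eb\|v\|_{L^p}$, and applying the non-weighted estimate to $v$, I arrive at
\[
\|\phi_0 u\|_{W^{2,p}}\le C_p\|\phi_0 g\|_{L^p}+C_p\,\eb\,\|\phi_0 u\|_{W^{2,p}}.
\]
The first-order cross term is the only genuinely dangerous one: it is bounded only by the \emph{full} $W^{2,p}$ norm of $v$, and this is exactly where the smallness of the weight enters. Choosing $\mu_0$ small enough that one can pick $\eb$ with $\mu<\eb\le1$ and $C_p\eb\le\frac12$, the last term is absorbed into the left-hand side, yielding the clean local bound $\|\phi_0 u\|_{W^{2,p}}\le 2C_p\|\phi_0 g\|_{L^p}$ uniformly in $x_0$; equivalently, via Proposition \ref{Lem2.T}, $\|u\|_{W^{2,p}_{\phi_{\eb,x_0}}}\le C_p\|g\|_{L^p_{\phi_{\eb,x_0}}}$.

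To globalize I would feed this family of local estimates into Proposition \ref{Prop2.eq}, applied term by term to $u$ and its derivatives up to order two. In the weighted case, the lower bound in \eqref{2.weightint} gives $\|u\|^p_{W^{2,p}_\phi}\le C\int_{\R^n}\phi(x_0)^p\|u\|^p_{W^{2,p}_{\phi_{\eb,x_0}}}\,dx_0$; inserting the local bound and then the upper bound of \eqref{2.weightint} applied to $g$ produces \eqref{2.regweight}. The uniformly local estimate \eqref{2.regul} is obtained identically, with the integral replaced by the supremum version \eqref{2.weightsup}. Note that $\eb>\mu$ is required precisely so that Proposition \ref{Prop2.eq} applies, and this must be compatible with the absorption constraint $C_p\eb\le\frac12$; the two conditions together are what force $\mu_0\ll1$ (and, since $C_p$ is the Calder\'on--Zygmund constant, allow $\mu_0$ to depend on $p$).

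Finally, uniqueness is immediate from the a priori estimate itself: if $u\in W^{2,p}_\phi$ (resp. $W^{2,p}_{b,\phi}$) solves the homogeneous equation, then \eqref{2.regweight} (resp. \eqref{2.regul}) with $g=0$ forces $u=0$; in particular the exponentially growing homogeneous solutions $e^{\omega\cdot x}$ with $|\omega|=1$ are excluded, their growth rate $1$ exceeding $\mu<\mu_0$. For existence I would approximate $g$ by compactly supported $g_n\to g$ in $L^p_\phi$, solve $(1-\Delta)u_n=g_n$ in the classical space $W^{2,p}(\R^n)$ (where $u_n$ decays exponentially at rate $1>\mu$ and hence lies in $W^{2,p}_\phi$), and use the a priori estimate applied to $u_n-u_m$ to conclude that $\{u_n\}$ is Cauchy in $W^{2,p}_\phi$; its limit is the desired solution. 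For the uniformly local case one first embeds $L^p_{b,\phi}$ into the slightly larger weighted space $L^p_{\phi\,e^{-\delta|\cdot|}}$ (valid for any $\delta>0$ because $\phi$ has exponential growth), produces a solution there, and upgrades its bound via \eqref{2.regul}. The only real obstacle in the whole scheme is the absorption of the first-order commutator term, which is what dictates the smallness hypothesis on $\mu$.
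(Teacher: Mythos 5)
Your proof is correct and follows essentially the same route as the paper: the same reduction via the multiplication operator $T_{\phi_{\eb,x_0}}$ to a perturbed non-weighted equation, absorption of the commutator terms (of size $O(\eb)$ by \eqref{2.dw}) for $\eb$ small, and globalization over $x_0$ through Proposition \ref{Prop2.eq} in both the integral and supremum forms. Your additional treatment of existence (approximation by compactly supported data, using the exponential decay of the Bessel kernel at rate $1>\mu$) and of uniqueness via the a priori estimate merely fills in details the paper explicitly delegates to ``standard approximation arguments,'' and is sound.
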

\begin{proof} We restrict ourselves to verifying the estimates only. The existence of a solutions
 can be obtained  using the standard approximation arguments.
\par
 {\it Step 1.} We start with the classical non-weighted
maximal regularity estimate for the solutions of the elliptic equation \eqref{2.lap}, namely,
\begin{equation}\label{2.non}
\|u\|_{W^{2,p}}\le C_p\|g\|_{L^p},
\end{equation}
see e.g., \cite{Tri}.
\par
{\it Step 2.} We get the analogue of \eqref{2.non} for the space $L^p_{\phi}$ with special
 weights $\phi=\phi_{\eb,x_0}$ for small $\eb>0$
and arbitrary $x_0\in\R^n$. To this end, we write $v=T_{\phi_{\eb,x_0}}u$ for the new variable $v$ which satisfies
the equation
\begin{multline}\label{2.per}
(1-\Dx)v-B_{\eb,x_0}v=T_{\phi_{\eb,x_0}}g:=\tilde g,\\ B_{\eb,x_0}v:=
2\phi_{\eb,x_0}\Nx\phi_{-\eb,x_0}\Nx v+\phi_{\eb,x_0}\Dx\phi_{-\eb,x_0}v.
\end{multline}
Then, according to Proposition \ref{Lem2.T}, it is enough to verify the non-weighted
 $(L^p,W^{2,p})$-estimate for equation \eqref{2.per}. On the other hand, due to estimate \eqref{2.dw},
 we have
\begin{equation}
\|B_{\eb,x_0}v\|_{L^p}\le C\eb\|v\|_{W^{1,p}},
\end{equation}
so for sufficiently small $\eb>0$, equation \eqref{2.per} is a small regular perturbation
 of equation \eqref{2.lap}, so the regularity estimate for this equation is an immediate
 corollary of \eqref{2.non}, namely,
 $$
 \|v\|_{W^{2,p}}\le \|\tilde g+B_{\eb,x_0}v\|_{L^p}\le \|\tilde g\|_{L^p}+ C\eb\|v\|_{W^{2,p}}
 $$
 and assuming that $\eb$ is small enough that $C\eb\le 1/2$ we get the desired estimate for $v$.
 Returning back to the variable $u$ (and using Lemma \ref{Lem2.T} again), we arrive at
 \begin{equation}\label{2.w}
 \|u\|_{W^{2,p}_{\phi_{\eb,x_0}}}\le C_p\|g\|_{L^p_{\phi_{\eb,x_0}}}.
 \end{equation}
{\it Step 3.} The case of arbitrary weight $\phi$. We essentially use that the constant $C_p$
in \eqref{2.w} is independent of $x_0\in\R$. Therefore, multiplying \eqref{2.w} by
 $\phi(x_0)$ (where the exponential
growth rate $\mu$ of the weight $\phi$ satisfies ($\mu<\eb$), taking $p$th power from both sides of the
obtained inequality, integrating over $x_0\in\R^n$ and using \eqref{2.weightint} we get
the desired estimate \eqref{2.regul}. Analogously, replacing integration by taking
supremum over $x_0\in\R^n$,
 we get the desired estimate \eqref{2.regul}. This finishes the proof of the corollary.
\end{proof}
\begin{rem} The scheme described above works not only for the Laplace equation, but for many other
types of equations (elliptic, parabolic, etc.), see \cite{MirZel} and references therein. It also works
 for obtaining higher regularity and regularity in fractional Sobolev spaces. We give above the
  detailed derivation of the simplest regularity estimate for the reader's convenience only and will use the
   analogous results in what follows without further explanations.
\end{rem}
The next useful estimate is actually a combination of \eqref{2.1} and Minkowski inequality.

\begin{cor} [see, e.g., \cite{EZ2001}]\label{th wsp1}
	Let $u\in L^p_\phi(\R^n)$, where $\phi$ is a weight function of exponential growth $\mu>0$.
Then for any $1\leq q\leq \infty$ and every $\eb>\mu$, the following estimate is valid:
\begin{equation}\label{2.2}
\(\int_{\R^n}\phi(x_0)^{pq}\(\int_{\R^n}\phi_{\eb,x_0}^p(x)|u(x)|^p\,dx\)^qdx_0\)^\frac{1}{q}
\leq C\|u\|^p_{L^p_{\phi_{\eb,x_0}}},
\end{equation}
where the constant $C$ depends only on $\eb$, $\mu$ and $C_\phi$
from \eqref{2.1}.
\end{cor}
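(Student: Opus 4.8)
The plan is to recognize the left-hand side as the $L^q(\R^n,dx_0)$-norm of the function
$$
G(x_0):=\phi(x_0)^p\int_{\R^n}\phi_{\eb,x_0}^p(x)|u(x)|^p\,dx=\int_{\R^n}\phi(x_0)^p\phi_{\eb,x_0}^p(x)|u(x)|^p\,dx ,
$$
and to show that $\|G\|_{L^q(dx_0)}\le C\|u\|^p_{L^p_\phi}$, which is the content of \eqref{2.2} with the right-hand side read as the $\phi$-weighted norm of $u$ (consistently with the $q=1$ endpoint already recorded in the upper bound of \eqref{2.weightint}). Since $G$ is the $x$-integral of the nonnegative kernel $F(x,x_0):=\phi(x_0)^p\phi_{\eb,x_0}^p(x)|u(x)|^p$, the natural tool is the Minkowski integral inequality, valid for $1\le q\le\infty$, which interchanges the order of integration at the cost of one inequality.

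First I would apply Minkowski's inequality in the form
$$
\left(\int_{\R^n}\left(\int_{\R^n}F(x,x_0)\,dx\right)^q dx_0\right)^{1/q}
\le\int_{\R^n}\left(\int_{\R^n}F(x,x_0)^q\,dx_0\right)^{1/q}dx ,
$$
which reduces the claim to estimating, for each fixed $x$, the inner integral $\left(\int_{\R^n}\phi(x_0)^{pq}\phi_{\eb,x_0}^{pq}(x)\,dx_0\right)^{1/q}$ and showing it is dominated by $C\,\phi(x)^p$. Factoring out $|u(x)|^p$ and integrating in $x$ would then produce exactly $C\|u\|^p_{L^p_\phi}$.

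The heart of the matter is the pointwise kernel bound. Using the defining inequality \eqref{2.1} in the form $\phi(x_0)=\phi(x+(x_0-x))\le C_\phi e^{\mu|x_0-x|}\phi(x)$, together with the elementary comparison $\phi_{\eb,x_0}(x)=e^{-\eb\sqrt{1+|x-x_0|^2}}\le e^{-\eb|x-x_0|}$, I obtain
$$
\phi(x_0)^{pq}\phi_{\eb,x_0}^{pq}(x)\le C_\phi^{pq}\,\phi(x)^{pq}\,e^{-pq(\eb-\mu)|x-x_0|}.
$$
Here the hypothesis $\eb>\mu$ is decisive: it makes the exponent strictly negative, so that after the substitution $z=x_0-x$ the quantity $\int_{\R^n}e^{-pq(\eb-\mu)|z|}\,dz=:C_0$ is finite and independent of $x$. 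Hence the inner integral is bounded by $C_\phi^{pq}C_0\,\phi(x)^{pq}$, and taking the power $1/q$ gives the desired bound $C_\phi^{p}C_0^{1/q}\phi(x)^p$. The resulting constant $C=C_\phi^{p}C_0^{1/q}$ depends only on $\eb$, $\mu$ and $C_\phi$ (and on the fixed parameters $p,q,n$), as asserted.

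The only genuine subtlety I foresee is the endpoint $q=\infty$, where the integral in $x_0$ must be replaced by an essential supremum throughout; there the kernel bound yields $\sup_{x_0}\phi(x_0)^p\phi_{\eb,x_0}^p(x)\le C_\phi^p\phi(x)^p$ directly, so the Minkowski step degenerates harmlessly. A minor but important point to state carefully is the direction in which \eqref{2.1} is applied: one needs the growth of $\phi(x_0)$ controlled by $\phi(x)$ times the increment factor, not the reverse, which is precisely the form of \eqref{2.1} when the base point is taken to be $x$ and the increment $x_0-x$.
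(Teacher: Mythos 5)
Your proof is correct and follows essentially the same route as the paper: both arguments apply the Minkowski integral inequality to interchange the $x$- and $x_0$-integrations and then invoke \eqref{2.1} to dominate $\phi(x_0)\phi_{\eb,x_0}(x)$ by $C_\phi\,\phi(x)e^{(\mu-\eb)|x-x_0|}$, whose $x_0$-integral converges precisely because $\eb>\mu$. Your two refinements---handling the $q=\infty$ endpoint explicitly and reading the right-hand side of \eqref{2.2} as $C\|u\|^p_{L^p_\phi}$ (which matches the conclusion of the paper's own computation and corrects an evident typo in the statement)---are sound but do not alter the method.
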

Indeed, thanks to Minkowski inequality and \eqref{2.1},
\begin{multline}
\(\int_{\R^n}\(\int_{\R^n}\(\phi(x_0)\phi_{\eb,x_0}(x)|u(x)|\)^p\,dx\)^q\,dx_0\)^\frac{1}{q}\le\\\le
 \int_{\R^n}\(\int_{\R^n}\(\phi(x_0)\phi_{\eb,x_0}(x)|u(x)|\)^{pq}\,dx_0\)^{1/q}\,dx\le\\\le
 C_\phi^p\int_{\R^n}\phi(x)^p|u(x)|^p\(\int_{\R^n}e^{pq(\mu-\eb)|x-x_0|}\,dx_0\)^{1/q}\,dx=C\|u\|_{L^p_\phi}^p.
\end{multline}
The next proposition gives another  way to reduce the study of  weighted spaces
 to the non-weighted case.

\begin{prop}\label{th wsp2}
	Let  $\phi$ be a function of exponential growth $\mu$ and let $R>0$ be a fixed number.
Then for any $p\in[1;\infty)$ the following estimates are valid:
\begin{equation}\label{2.lpr}
C_1\|u\|_{L^p_\phi}^p\le \int_{\R^n}\phi^p(x_0)\|u\|^p_{L^p(B^R_{x_0})}\,dx_0\le
 C_2\|u\|^p_{L^p_\phi},
\end{equation}
where the constants $C_1$ and $C_2$ depend on $R$, $C_\phi$, $p$, and $\mu$ only.
\end{prop}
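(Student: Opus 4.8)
The plan is to compute the middle quantity in \eqref{2.lpr} explicitly by unfolding the local $L^p$-norm, interchanging the order of integration, and then controlling the weight $\phi(x_0)$ by $\phi(x)$ uniformly over the ball $B^R_{x_0}$ with the help of \eqref{2.1}. First I would write
$$
\int_{\R^n}\phi^p(x_0)\|u\|^p_{L^p(B^R_{x_0})}\,dx_0
=\int_{\R^n}\phi^p(x_0)\int_{B^R_{x_0}}|u(x)|^p\,dx\,dx_0,
$$
and observe that the condition $x\in B^R_{x_0}$ is symmetric in $x$ and $x_0$ (both amount to $|x-x_0|\le R$). Since the integrand is nonnegative and measurable, Tonelli's theorem permits swapping the order of integration, giving
$$
\int_{\R^n}|u(x)|^p\left(\int_{B^R_x}\phi^p(x_0)\,dx_0\right)dx.
$$
Thus the whole estimate reduces to bounding the inner integral $\int_{B^R_x}\phi^p(x_0)\,dx_0$ from above and below by a constant multiple of $\phi^p(x)$.

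To this end I would use the two-sided consequence of \eqref{2.1}. Writing $x_0=x+y$ with $y=x_0-x$, the defining inequality yields $\phi(x_0)\le C_\phi e^{\mu|x_0-x|}\phi(x)$, while applying the same inequality to the weight $1/\phi$ (which, by Remark \ref{rem 2.1}, has the same exponential growth $\mu$ and the same constant $C_\phi$) gives the reverse bound $\phi(x_0)\ge C_\phi^{-1}e^{-\mu|x_0-x|}\phi(x)$. On the ball $B^R_x$ one has $|x_0-x|\le R$, so both exponentials are pinned between $e^{-\mu R}$ and $e^{\mu R}$; raising to the $p$-th power and integrating over $x_0\in B^R_x$ produces
$$
C_\phi^{-p}e^{-p\mu R}|B^R_0|\,\phi^p(x)\le\int_{B^R_x}\phi^p(x_0)\,dx_0\le C_\phi^{p}e^{p\mu R}|B^R_0|\,\phi^p(x),
$$
where $|B^R_0|$ denotes the Lebesgue measure of a ball of radius $R$ in $\R^n$. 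Substituting these two bounds into the reduced integral and recognizing $\int_{\R^n}|u(x)|^p\phi^p(x)\,dx=\|u\|^p_{L^p_\phi}$ gives \eqref{2.lpr} with the explicit constants $C_1=C_\phi^{-p}e^{-p\mu R}|B^R_0|$ and $C_2=C_\phi^{p}e^{p\mu R}|B^R_0|$, depending only on $R$, $C_\phi$, $p$ and $\mu$.

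I do not expect any serious obstacle: the argument is a clean Tonelli computation combined with the pointwise weight comparison. The only points requiring minor care are the justification of the order-interchange (legitimate by nonnegativity of the integrand) and the correct use of the reverse inequality for $\phi$, for which Remark \ref{rem 2.1} is precisely tailored. I would also note, in contrast to Proposition \ref{Prop2.eq}, that here no smallness condition relating the weight parameters is needed, since the rapidly decaying weights $\phi_{\eb,x_0}$ are replaced by the compactly supported cutoff onto $B^R_{x_0}$, so all exponentials involved are automatically bounded on $B^R_x$.
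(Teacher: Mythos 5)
Your proof is correct and follows exactly the standard argument: the paper does not reproduce a proof but refers to \cite{EZ2001}, where the same computation appears — unfold the local norm, use Tonelli (justified by nonnegativity, with the symmetry $x\in B^R_{x_0}\Leftrightarrow x_0\in B^R_x$), and pin $\phi(x_0)$ between $C_\phi^{-1}e^{-\mu R}\phi(x)$ and $C_\phi e^{\mu R}\phi(x)$ on $B^R_x$ via \eqref{2.1} and Remark \ref{rem 2.1}. Your explicit constants $C_1=C_\phi^{-p}e^{-p\mu R}|B^R_0|$ and $C_2=C_\phi^{p}e^{p\mu R}|B^R_0|$, and the observation that no smallness condition on the weight parameters is needed here, are both accurate.
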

For the proof of this estimate, see e.g., \cite{EZ2001}.
\par
As an immediate corollary of this estimates, we get the equivalent norms in Sobolev spaces
$W^{l,p}_\phi(\R^n)$ for integer $l>0$.

\begin{cor}\label{cor wsp3}
Let $l\in\Bbb{N}$, $1\le p\le\infty$. Then an equivalent norm in
 $W^{l,p}_{\phi}(\R^n)$ is given by the following expression:
\begin{equation}\label{2.8}
\|u\|_{W^{l,p}_{\phi,R}}:=\(\int_{\R^n}\phi^p(x_0)\|u\|^p_{W^{l,p}(B^R_{x_0})}\,dx_0\)^\frac{1}{p}.
\end{equation}
	In particular we obtain that norms \eqref{2.8} are equivalent for different $R>0$.
\end{cor}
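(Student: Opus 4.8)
The plan is to deduce the claim directly from Proposition~\ref{th wsp2} applied separately to each partial derivative of $u$. Recall that, by Definition~\ref{def fsexpg}, a function belongs to $W^{l,p}_\phi(\R^n)$ precisely when $D^\alpha u\in L^p_\phi(\R^n)$ for all $|\alpha|\le l$, so the natural norm on this space is (equivalent to)
\[
\|u\|_{W^{l,p}_\phi}=\Big(\sum_{|\alpha|\le l}\|D^\alpha u\|^p_{L^p_\phi}\Big)^{1/p}.
\]
On the other hand, the localised quantity \eqref{2.8} splits in the same fashion, since $\|u\|^p_{W^{l,p}(B^R_{x_0})}=\sum_{|\alpha|\le l}\|D^\alpha u\|^p_{L^p(B^R_{x_0})}$; interchanging the (finite) sum with the integral in $x_0$ gives
\[
\|u\|^p_{W^{l,p}_{\phi,R}}=\sum_{|\alpha|\le l}\int_{\R^n}\phi^p(x_0)\,\|D^\alpha u\|^p_{L^p(B^R_{x_0})}\,dx_0.
\]

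First I would apply the two-sided bound \eqref{2.lpr} of Proposition~\ref{th wsp2} to the function $D^\alpha u\in L^p_\phi(\R^n)$ in place of $u$, which is legitimate because $u\in W^{l,p}_\phi$ forces $D^\alpha u\in L^p_\phi$ and the weight $\phi$ is unchanged. This yields
\[
C_1\|D^\alpha u\|^p_{L^p_\phi}\le\int_{\R^n}\phi^p(x_0)\,\|D^\alpha u\|^p_{L^p(B^R_{x_0})}\,dx_0\le C_2\|D^\alpha u\|^p_{L^p_\phi}
\]
for each multi-index $\alpha$. The crucial observation is that the constants $C_1,C_2$ furnished by Proposition~\ref{th wsp2} depend only on $R$, $C_\phi$, $p$ and $\mu$, hence are \emph{the same} for every $\alpha$; summing the finitely many inequalities over $|\alpha|\le l$ therefore preserves them and produces $C_1\|u\|^p_{W^{l,p}_\phi}\le\|u\|^p_{W^{l,p}_{\phi,R}}\le C_2\|u\|^p_{W^{l,p}_\phi}$, which is exactly the asserted equivalence of norms. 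The $R$-independence then follows by transitivity: for any $R_1,R_2>0$ both $\|\cdot\|_{W^{l,p}_{\phi,R_1}}$ and $\|\cdot\|_{W^{l,p}_{\phi,R_2}}$ are comparable to $\|\cdot\|_{W^{l,p}_\phi}$, hence to each other.

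There is no serious obstacle here---the result is genuinely an immediate corollary---so the only points requiring care are bookkeeping ones. The first is to confirm that the equivalence constants do not deteriorate as the number of derivative terms grows, which is guaranteed by the $\alpha$-uniformity of $C_1,C_2$ noted above. The second is the endpoint $p=\infty$, for which the integral expression \eqref{2.8} must be read with the integral over $x_0$ replaced by a supremum; in that case one invokes the uniformly local (supremum) version of Proposition~\ref{th wsp2} and repeats the argument verbatim, using that the supremum of a finite sum is comparable to the sum of the suprema.
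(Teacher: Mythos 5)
Your proposal is correct and matches the paper's intended argument: the paper gives no separate proof, presenting Corollary~\ref{cor wsp3} as an immediate consequence of Proposition~\ref{th wsp2}, and your derivative-by-derivative application of \eqref{2.lpr} with constants uniform in the multi-index $\alpha$, followed by transitivity for the $R$-independence, is exactly that routine verification. Your remark that the endpoint $p=\infty$ requires reading \eqref{2.8} as a supremum (Proposition~\ref{th wsp2} being stated only for $p\in[1,\infty)$) is a sensible clarification consistent with the paper's conventions.
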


We see that representation \eqref{2.8} reduces weighted Sobolev norm to Sobolev norm on bounded domains.
 Particularly, this gives the benefit of using standard Sobolev embeddings theorems for bounded domains
  (see \cite{MirZel}). Moreover, in analogy to \eqref{2.8} we are able to define
fractional weighted Besov-Sobolev spaces. We recall that, for any domain $V$ with smooth
boundary and any $s>0$, $s\notin\N$, the space $W^{s,p}(V)=B^s_{p,p}(V)$ is defined via the following norm:
\begin{equation*}
\|u\|^p_{W^{s,p}(V)}=\|u\|^p_{W^{[s],p}(V)}+
\sum_{|\alpha|=[s]}\int_{x\in V}\int_{y\in V}
\frac{|\d^\alpha u(x)-\d^\alpha u(y)|^p}{|x-y|^{n+\{s\}p}}dxdy,
\end{equation*}
where $[s]$ and $\{s\}$ denote integer and fractional part of $s$ respectively. As usual, for negative
non-integer $s$, the space $W^{s,p}(V)$ is defined by duality, see \cite{Tri} for the details.

\begin{Def}
	Let $s\in\R$ and  $1\le p\le\infty$ and $R>0$  be fixed numbers and let $\phi$ be a weight function
 with an exponential growth $\mu$.
   The equivalent norms in the space $W^{s,p}_\phi(\R^n)$ are defined by
\begin{equation}\label{2.9}
\|u\|_{W^{s,p}_{\phi,R}}:= \(\int_{\R^n} \phi^p(x_0)\|u\|^p_{W^{s,p}(B^R_{x_0})}dx_0\)^{1/p},
\end{equation}
where $R>0$ is arbitrary. We will write $\|u\|_{W^{s,p}_\phi}$ instead of $\|u\|_{W^{s,p}_{\phi,1}}$.
\end{Def}
 It is not difficult to check that norms defined by \eqref{2.9} are indeed equivalent for different $R>0$
  as well as \eqref{2.9}  gives usual
norm for $W^{s,p}(\R^n)$ if we take $\phi\equiv 1$ (see \cite{EZ2001}).
 Hence the above definition is natural. It is also straightforward to check that, analogously to
 \eqref{2.weightint},
\begin{equation}
\|u\|_{W^{s,p}_{\phi}}^p\sim \int_{\R^n}\phi(x_0)^p\|\phi_{\eb,x_0}u\|^p_{W^{s,p}(\R^n)}\,dx_0,
\end{equation}
if $\mu<\eb$,
 so the analogues of Proposition \ref{Lem2.T} and Corollary
\ref{Cor2.reg} hold for fractional weighted Besov-Sobolev spaces as well.

\begin{rem} It is useful to introduce the following notation for the above mentioned equivalent norms in
$W^{s,p}_b(\R^n)$:
$$
\|u\|_{W^{s,p}_{b,R}}:=\sup_{x_0\in\R^n}\|u\|_{W^{s,p}(B^R_{x_0})}.
$$
Then, the equivalence means that, for any $R_1,R_2>0$,
$$
C_{R_1,R_2}^{-1}\|u\|_{W^{s,p}_{b,R_1}}\le \|u\|_{W^{s,p}_{b,R_2}}\le C_{R_1,R_2}\|u\|_{W^{s,p}_{b,R_1}}
$$
for some positive constant $C_{R_1,R_2}$. In particular, the case $R_1=2R_2=R\ge1$ in especially
 interesting for us. Note that in this case the constant $C_{R_1,R_2}$ is actually independent
  of $R_1$ and $R_2$. The last fact can be easily verified using scaling arguments.
\end{rem}

We also need the scale of weighted Lebesgue-Besov spaces $H^{s,p}_\phi(\R^n)$ (or Bessel potential spaces).
 Recall that in the non-weighted case they are usually defined via the Fourier transform:
$$
H^{s,p}(\R^n):=\left\{u\in\mathcal S'(\R^n),\ \|u\|_{H^{s,p}}:=
\|\mathcal F^{-1}((1+|\xi|^2)^{s/2}\mathcal F u)\|_{L^p}<\infty\right\},
$$
where $\mathcal F$ is a Fourier transform, $s\in\R$ and $1<p<\infty$. Alternatively,
these spaces can be defined as domains of fractional powers of the operator $1-\Dx$ in $L^p$:
$$
H^{s,p}(\R^n)=D((1-\Dx)^{s/2}).
$$
It is well-known that $W^{s,p}(\R^n)=H^{s,p}(\R^n)$ if $p=2$ or $s\in\Bbb Z$. But for
non-integer $s\ge0$, we have the proper inclusion
$$
H^{s,p}(\R^n)\subset W^{s,p}(R^n) \ \ \text{if $p>2$}
$$
and the opposite proper inclusion if $p<2$, see \cite{Tri} for details.
\par
The space $H^{s,p}(V)$, where $V$ is a smooth bounded domain in $\R^n$
 (we will consider in this paper only the case $V=B^R_{x_0}$), is usually defined as a restriction of
 $H^{s,p}(\R^n)$ to $V$:
 $$
 H^{s,p}(V)=\left\{v\in \mathcal D'(V),\ \ \exists u\in H^{s,p}(\R^n),\ \ u\big|_{\Omega}=v\right\}
 $$
endowed with the standard factor-norm. It is also known that the restriction operator $u\to u\big|_{V}$
 is a retraction and the corresponding co-retraction (extension operator) can be chosen independently
  of $1<p<\infty$ and $|s|\le N$ for every fixed $N\in\Bbb N$, see \cite{Tri}. Mention also a useful relation
\begin{equation}\label{2.int}
H^{s,p}(V)=[L^p(V),W^{1,p}(V)]_s, \ 0<s<1,
\end{equation}
where $[\cdot,\cdot]_s$ means complex interpolation, see \cite{Tri}. Throughout of the paper  we
 will write below $H^s$ instead of $H^{s,2}$.
\par
The main reason for us to use fractional Lebesgue-Sobolev spaces is the following Kato-Ponce estimate
 which is crucial for obtaining the further regularity of solutions for the considered damped wave
  equation and which is naturally formulated exactly in these spaces.
\begin{prop}\label{Prop2.KP} Let $V$ be a bounded domain with smooth boundary and let $0<\alpha<1$
 and $1<r<\infty$. Then,
\begin{equation}\label{2.KP}
\|uv\|_{H^{\alpha,r}(V)}\le C\|u\|_{L^{p_1}(V)}\|v\|_{H^{\alpha,q_1}(V)}+C\|v\|_{L^{p_2}(V)}\|u\|_{H^{\alpha,q_2}(V)},
\end{equation}
where $\frac1r=\frac1{p_i}+\frac1{q_i}$,\ \ $1< p_i,q_i<\infty$.
\end{prop}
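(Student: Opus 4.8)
The statement to prove is the Kato-Ponce (fractional Leibniz) inequality on a bounded smooth domain $V$:
\begin{equation*}
\|uv\|_{H^{\alpha,r}(V)}\le C\|u\|_{L^{p_1}(V)}\|v\|_{H^{\alpha,q_1}(V)}+C\|v\|_{L^{p_2}(V)}\|u\|_{H^{\alpha,q_2}(V)},
\end{equation*}
with $0<\alpha<1$, $1<r<\infty$, and $\frac1r=\frac1{p_i}+\frac1{q_i}$. Let me think about how I'd attack this.

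The classical Kato-Ponce inequality is well-established on the whole space $\mathbb R^n$. So the natural strategy is a transference argument: reduce the statement on $V$ to the known statement on $\mathbb R^n$ via extension and restriction operators, exploiting the fact (quoted in the excerpt from Triebel) that for $H^{s,p}(V)$ there's a universal extension operator $E$ (a co-retraction) bounded from $H^{s,p}(V)$ to $H^{s,p}(\mathbb R^n)$, chosen uniformly in $1<p<\infty$ and $|s|\le N$, together with the restriction $R: H^{s,p}(\mathbb R^n)\to H^{s,p}(V)$.

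Let me sketch the steps.

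**Step 1 — Whole-space version.** First I would record the classical fractional Leibniz rule on $\mathbb R^n$: for $0<\alpha<1$, $1<r<\infty$, and Hölder-conjugate splittings $\frac1r=\frac1{p_i}+\frac1{q_i}$,
\begin{equation*}
\|uv\|_{H^{\alpha,r}(\mathbb R^n)}\le C\|u\|_{L^{p_1}}\|v\|_{H^{\alpha,q_1}}+C\|v\|_{L^{p_2}}\|u\|_{H^{\alpha,q_2}}.
\end{equation*}
This is standard (Kato-Ponce; see the references cited in the paper), so I may simply cite it.

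**Step 2 — Extension.** Given $u,v$ defined on $V$, extend them to $\tilde u=E_1 u$ and $\tilde v=E_2 v$ on $\mathbb R^n$. The subtlety is that $uv$ on $V$ need not equal $\tilde u\tilde v$ restricted to $V$ in a useful way for a single extension — so I'd be careful to extend $u$ and $v$ separately using the universal extension operator, form the product $\tilde u\tilde v$ on $\mathbb R^n$, and note that its restriction to $V$ agrees with $uv$ since $\tilde u|_V=u$ and $\tilde v|_V=v$. Here the key point is that the extension operator is bounded simultaneously on $L^{p}(V)\to L^{p}(\mathbb R^n)$ and on $H^{\alpha,q}(V)\to H^{\alpha,q}(\mathbb R^n)$ for all the relevant exponents, with norm depending only on $\alpha$ and $V$ (not on the particular $p,q$), which is exactly the uniformity guaranteed by Triebel.

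**Step 3 — Assemble.** Then
\begin{equation*}
\|uv\|_{H^{\alpha,r}(V)}=\|R(\tilde u\tilde v)\|_{H^{\alpha,r}(V)}\le\|\tilde u\tilde v\|_{H^{\alpha,r}(\mathbb R^n)},
\end{equation*}
and applying Step 1 on $\mathbb R^n$ bounds the right-hand side by
\begin{equation*}
C\|\tilde u\|_{L^{p_1}}\|\tilde v\|_{H^{\alpha,q_1}}+C\|\tilde v\|_{L^{p_2}}\|\tilde u\|_{H^{\alpha,q_2}}.
\end{equation*}
Finally the boundedness of the extension operators turns each $\|\tilde u\|_{L^{p_1}}$ into $C\|u\|_{L^{p_1}(V)}$, each $\|\tilde v\|_{H^{\alpha,q_1}}$ into $C\|v\|_{H^{\alpha,q_1}(V)}$, and so on, which yields the claimed estimate.

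**Main obstacle.** The delicate point is the uniformity of the extension operator across the various exponents $p_i,q_i$ appearing in the two Hölder splittings, and the fact that a single co-retraction works on the whole interpolation scale $|s|\le N$ for all $1<p<\infty$. Once that is invoked from Triebel, the argument is purely mechanical. A secondary subtlety worth a remark is that $H^{\alpha,r}(V)$ is defined by the factor (quotient) norm, so the inequality $\|R(\tilde u\tilde v)\|_{H^{\alpha,r}(V)}\le\|\tilde u\tilde v\|_{H^{\alpha,r}(\mathbb R^n)}$ is just the definition of the quotient norm as an infimum over extensions — no loss occurs there. I expect the whole proof to be short precisely because all the hard analysis (the Kato-Ponce estimate on $\mathbb R^n$ and the universal extension) is imported.
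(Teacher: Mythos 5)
Your proposal is correct and matches the paper's proof, which likewise cites the whole-space Kato-Ponce estimate (from the Bahouri--Chemin--Danchin reference) and reduces the bounded-domain case to $V=\R^n$ via the extension operator, with $H^{\alpha,r}(V)$ carrying the quotient norm. The only cosmetic difference is that you spell out the extension--multiply--restrict mechanics and the uniformity of the co-retraction, which the paper leaves implicit in a one-line remark.
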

The proof of this estimate can be found, e.g., in \cite{Chem} for $V=\R^n$. The general case is reduced
 to the case $V=\R^n$ using the extension operator.
\begin{rem}\label{Rem2.cut} Mention also one more obvious, but useful property of the introduced norms.
 Namely, let $\psi_{x_0}\in C^\infty_0(\R^n)$ be a cut-off function such that
 $\psi_{x_0}(x)\equiv1$ for $|x-x_0|\le1$ and $\psi_{x_0}(x)\equiv0$ if $|x-x_0|\ge 3/2$. Then
 \begin{multline}\label{2.V}
\|u\|_{H^{s,p}(B^1_{x_0})}\le \|\psi_{x_0}u\|_{H^{s,p}(\R^n)}\le\\\le C\|\psi_{x_0}u\|_{H^{s,p}(B^2_{x_0})}
\le C\|u\|_{H^{s,p}(B^2_{x_0})}.
 \end{multline}
Indeed, these estimates follow in a straightforward way from the definition of the norm in $H^{s,p}(V)$
 and identity \eqref{2.int}.
\end{rem}
In order to introduce and study  weighted fractional Lebesgue-Sobolev spaces we need the following commutator estimate.
\begin{prop}\label{Prop2.com} Let $s\in(0,1)$, $1<p<\infty$ and $|\eb|$ be small enough. Then the following estimate holds:
\begin{equation}\label{2.com1}
\|\phi_{\eb,x_0}(1-\Dx)^{s/2}u-(1-\Dx)^{s/2}(\phi_{\eb,x_0}u)\|_{L^p}\le C_p|\eb|\|u\|_{L^p_{\phi_{\eb,x_0}}},
\end{equation}
where the constant $C_p$ depends on $p$ and $s$. Moreover, for any $\psi\in C^\infty_0(\R^n)$ and sufficiently small $\eb>0$,
\begin{equation}\label{2.com2}
\|\psi(1-\Dx)^{s/2}u-(1-\Dx)^{s/2}(\psi u)\|_{L^p}\le C_{p,\psi,\eb,x_0}\|u\|_{L^p_{\phi_{\eb,x_0}}}.
\end{equation}
\end{prop}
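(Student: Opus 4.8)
The plan is to represent the fractional power through the resolvent of $A:=1-\Dx$ and reduce both estimates to (weighted) $L^p$ resolvent bounds, the smallness factor $\eb$ coming entirely from \eqref{2.dw}. Write $\sigma:=s/2\in(0,\tfrac12)$ and use the Balakrishnan formula $A^\sigma=c_\sigma\int_0^\infty\lambda^{\sigma-1}A(\lambda+A)^{-1}\,d\lambda$ with $c_\sigma=\frac{\sin\pi\sigma}{\pi}$. Since $A(\lambda+A)^{-1}=1-\lambda(\lambda+A)^{-1}$, commuting with a multiplication operator $M$ (by $\phi_{\eb,x_0}$ or by $\psi$) gives the identity
\[
[M,A^\sigma]=c_\sigma\int_0^\infty\lambda^\sigma(\lambda+A)^{-1}[\Dx,M](\lambda+A)^{-1}\,d\lambda,\qquad [\Dx,M]=2\Nx M\cdot\Nx+\Dx M,
\]
so $[\Dx,M]$ is a first-order operator with coefficients $\Nx M$ and $\Dx M$. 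The two analytic‑semigroup ingredients I need are $\|(\lambda+A)^{-1}\|_{\mathcal L(L^p)}\le C_p(1+\lambda)^{-1}$ and the gradient bound $\|(\lambda+A)^{-1}\|_{\mathcal L(L^p,W^{1,p})}\le C_p(1+\lambda)^{-1/2}$ (the latter from $\|A^{1/2}(\lambda+A)^{-1}\|_{\mathcal L(L^p)}\le C_p(1+\lambda)^{-1/2}$ and $L^p$-boundedness of $\Nx A^{-1/2}$), together with their weighted analogues in $L^p_{\phi_{\eb,x_0}}$, obtained exactly as in Corollary \ref{Cor2.reg} by conjugating with $T_{\phi_{\eb,x_0}}$ and treating the perturbation $B_{\eb,x_0}$ of \eqref{2.per} as a small perturbation that is uniform in $\lambda\ge0$ (since $\|B_{\eb,x_0}(\lambda+A)^{-1}\|_{\mathcal L(L^p)}\le C\eb(1+\lambda)^{-1/2}$).

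For \eqref{2.com1} I would avoid weights on the output by conjugating: with $\phi:=\phi_{\eb,x_0}$ one checks (by conjugating the resolvent representation termwise) that $A_\phi^\sigma=\phi A^\sigma\phi^{-1}$ where $A_\phi:=\phi A\phi^{-1}=A-B_{\eb,x_0}$ is precisely the operator of \eqref{2.per}, whence $[\phi,A^\sigma]u=(A_\phi^\sigma-A^\sigma)(\phi u)$. By \eqref{2.dw} the first-order coefficient of $B_{\eb,x_0}$ is $O(\eb)$ and the zeroth-order one $O(\eb^2)$ in $L^\infty$. The second resolvent identity gives
\[
A_\phi^\sigma-A^\sigma=-c_\sigma\int_0^\infty\lambda^\sigma(\lambda+A)^{-1}B_{\eb,x_0}(\lambda+A_\phi)^{-1}\,d\lambda,
\]
and bounding the factors in $L^p$ by $\|(\lambda+A)^{-1}\|\le C(1+\lambda)^{-1}$ and $\|B_{\eb,x_0}(\lambda+A_\phi)^{-1}\|\le C\eb(1+\lambda)^{-1/2}$ (the first-order bound $\|B_{\eb,x_0}w\|_{L^p}\le C\eb\|w\|_{W^{1,p}}$ from \eqref{2.per} combined with the perturbed gradient resolvent bound) produces an integrand $\le C\eb\,\lambda^\sigma(1+\lambda)^{-3/2}$. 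The $\lambda$-integral converges precisely because $\sigma<\tfrac12$, i.e. $s<1$, so $\|A_\phi^\sigma-A^\sigma\|_{\mathcal L(L^p)}\le C\eb$; inserting $\phi u$ and using $\|\phi u\|_{L^p}=\|u\|_{L^p_{\phi_{\eb,x_0}}}$ gives \eqref{2.com1}.

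For \eqref{2.com2} the multiplier $\psi$ is not invertible, so I would apply the commutator identity directly with $M=\psi$; now $[\Dx,\psi]$ has $O(1)$ but compactly supported coefficients, supported in $K:=\operatorname{supp}\psi$, and no smallness is available. The only delicate point is that $u$ lies merely in $L^p_{\phi_{\eb,x_0}}$: it enters only through $(\lambda+A)^{-1}u$, which is immediately multiplied by the $K$-supported operator $[\Dx,\psi]$, and on the compact set $K$ the weight $\phi_{\eb,x_0}$ is bounded above and below, so one may pass from the weighted to the plain $L^p$ norm at the cost of a constant $C_{K,\eb,x_0}$. Using the weighted resolvent bounds one then gets $\|[\Dx,\psi](\lambda+A)^{-1}u\|_{L^p}\le C_{\psi,\eb,x_0}(1+\lambda)^{-1/2}\|u\|_{L^p_{\phi_{\eb,x_0}}}$; applying the outer resolvent (bounded on plain $L^p$ by $C(1+\lambda)^{-1}$) and integrating $\lambda^\sigma(1+\lambda)^{-3/2}$ over $\lambda>0$ yields \eqref{2.com2}.

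The main obstacle is the bookkeeping of the $\lambda$-dependence so that the representation integral converges: the decisive gain is the half power $(1+\lambda)^{-1/2}$ coming from the gradient resolvent estimate applied to the first-order commutator $[\Dx,M]$, which together with $\sigma<\tfrac12$ makes $\int_0^\infty\lambda^\sigma(1+\lambda)^{-3/2}\,d\lambda<\infty$. Establishing these resolvent and gradient bounds in the weighted spaces, uniformly in $\lambda\ge0$ and (up to constants) in $\eb$ and $x_0$, via the conjugation scheme of Corollary \ref{Cor2.reg}, is the part requiring care, whereas the smallness factor $\eb$ in \eqref{2.com1} is a direct consequence of \eqref{2.dw}.
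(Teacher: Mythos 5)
Your proof is correct, but it takes a genuinely different route from the paper's. The paper represents the fractional power by the heat-semigroup formula $(1-\Dx)^{\alpha}u=\frac{1}{\Gamma(-\alpha)}\int_0^\infty\bigl(e^{-t(1-\Dx)}u-u\bigr)t^{-1-\alpha}\,dt$, observes that the commutator $U(t)=\psi e^{-t(1-\Dx)}u-e^{-t(1-\Dx)}(\psi u)$ solves a parabolic problem with the first-order forcing $h_\psi=-2\Nx\psi\cdot\Nx\bar u-\Dx\psi\,\bar u$, and then combines the weighted parabolic smoothing bound $t^{1/2}\|h_\psi(t)\|_{L^p}\le Ce^{-\kappa t}\|u\|_{L^p_{\phi_{\eb,x_0}}}$ with the anisotropic maximal $L^q(L^p)$-regularity of the heat equation (from \cite{LMR}) and a H\"older-in-time argument with exponents $2-\delta$ and $\frac{2-\delta}{1-\delta}$; there the restriction $s<1$ enters through convergence of $\int_0^1 t^{-\alpha\frac{2-\delta}{1-\delta}}\,dt$ with $\alpha=s/2$. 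You instead use the Balakrishnan resolvent formula and reduce everything to stationary resolvent bounds, and for \eqref{2.com1} you shortcut via the conjugation identity $[\phi,A^\sigma]u=(A_\phi^\sigma-A^\sigma)(\phi u)$ together with the second resolvent identity. Both proofs rest on the same two structural inputs --- the commutator of $\Dx$ with the multiplier is a first-order operator whose coefficients are $O(\eb)$ by \eqref{2.dw}, and a half-power gain ($t^{1/2}$-parabolic smoothing in the paper, the $(1+\lambda)^{-1/2}$ gradient resolvent estimate in your version) makes the representation integral converge exactly when $s<1$ --- but yours trades the maximal-regularity machinery for elementary Mikhlin-multiplier bounds on $(\lambda+1-\Dx)^{-1}$ and a Neumann series for the perturbed resolvent $(\lambda+A_\phi)^{-1}$, which makes the $\eb$-smallness and the uniformity in $x_0$ quite transparent, and in the case of \eqref{2.com1} even yields the slightly stronger operator statement $\|A_\phi^\sigma-A^\sigma\|_{\mathcal L(L^p)}\le C|\eb|$. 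Two sketch-level points deserve an explicit line (at a level of detail comparable to the paper's own sketch): the identification $\phi A^\sigma\phi^{-1}=(\phi A\phi^{-1})^\sigma$ should be justified by noting that $T_{\phi_{\eb,x_0}}$ intertwines the resolvents, hence the Balakrishnan integrals, on the dense class $C_0^\infty(\R^n)$; and in \eqref{2.com2} the Balakrishnan representation must be invoked for $u$ merely in $L^p_{\phi_{\eb,x_0}}$, which again follows since $1-\Dx$ generates an analytic semigroup on the weighted space by the same perturbation argument, so the formula extends by density.
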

Although these estimates are more or less standard, we sketch the proof in Appendix \ref{sB} below.
\begin{cor}\label{Cor2.HT} Let $0<s<1$ and $|\eb|$ be small enough. Then
\begin{equation}\label{2.equ}
C_1\|\phi_{\eb,x_0}u\|_{H^{s,p}}\le\|\phi_{\eb,x_0}(1-\Dx)^{s/2}u\|_{L^p}\le
C_2\|\phi_{\eb,x_0}u\|_{H^{s,p}},
\end{equation}
where the constants $C_i$ are independent of $x_0\in\R^n$.
\end{cor}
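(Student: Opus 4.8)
The plan is to read off the corollary directly from the commutator estimate \eqref{2.com1} of Proposition \ref{Prop2.com}, combined with the defining identity $\|v\|_{H^{s,p}}=\|(1-\Dx)^{s/2}v\|_{L^p}$ and the triangle inequality. Applying this identity to $v=\phi_{\eb,x_0}u$ turns the middle quantity $\|\phi_{\eb,x_0}(1-\Dx)^{s/2}u\|_{L^p}$ into a perturbation of $\|\phi_{\eb,x_0}u\|_{H^{s,p}}=\|(1-\Dx)^{s/2}(\phi_{\eb,x_0}u)\|_{L^p}$, the two differing precisely by the commutator controlled in \eqref{2.com1}.

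For the right-hand inequality I would write, by the triangle inequality,
\begin{equation*}
\|\phi_{\eb,x_0}(1-\Dx)^{s/2}u\|_{L^p}\le \|(1-\Dx)^{s/2}(\phi_{\eb,x_0}u)\|_{L^p}+\|\phi_{\eb,x_0}(1-\Dx)^{s/2}u-(1-\Dx)^{s/2}(\phi_{\eb,x_0}u)\|_{L^p},
\end{equation*}
and bound the last term by $C_p|\eb|\,\|u\|_{L^p_{\phi_{\eb,x_0}}}$ using \eqref{2.com1}. Since $\|u\|_{L^p_{\phi_{\eb,x_0}}}=\|\phi_{\eb,x_0}u\|_{L^p}$, the only remaining point is the elementary embedding $H^{s,p}\subset L^p$ for $s>0$ (equivalently, the $L^p$-boundedness with unit norm of the Bessel potential $(1-\Dx)^{-s/2}$, whose kernel is nonnegative with integral $1$), which gives $\|\phi_{\eb,x_0}u\|_{L^p}\le\|\phi_{\eb,x_0}u\|_{H^{s,p}}$. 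This yields $\|\phi_{\eb,x_0}(1-\Dx)^{s/2}u\|_{L^p}\le(1+C_p|\eb|)\|\phi_{\eb,x_0}u\|_{H^{s,p}}$, i.e. the upper bound with $C_2=1+C_p|\eb|$.

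For the left-hand inequality I would interchange the roles of the two terms: again by the triangle inequality and \eqref{2.com1},
\begin{equation*}
\|\phi_{\eb,x_0}u\|_{H^{s,p}}\le \|\phi_{\eb,x_0}(1-\Dx)^{s/2}u\|_{L^p}+C_p|\eb|\,\|\phi_{\eb,x_0}u\|_{H^{s,p}}.
\end{equation*}
The new feature here is that the error term now sits on the same side as the quantity being estimated, so I would absorb it: choosing $|\eb|$ so small that $C_p|\eb|\le\frac12$ gives $\|\phi_{\eb,x_0}u\|_{H^{s,p}}\le 2\|\phi_{\eb,x_0}(1-\Dx)^{s/2}u\|_{L^p}$, i.e. the lower bound with $C_1=\frac12$. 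This absorption step, and hence the smallness requirement on $|\eb|$, is the only genuinely delicate point; everything else is the triangle inequality together with $\|v\|_{H^{s,p}}=\|(1-\Dx)^{s/2}v\|_{L^p}$.

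Finally, both constants produced in this way are independent of $x_0$ precisely because the constant $C_p$ in \eqref{2.com1} is, which is exactly the content of Proposition \ref{Prop2.com}; no further uniformity argument is needed.
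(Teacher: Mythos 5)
Your proof is correct and takes essentially the same route as the paper, which likewise deduces \eqref{2.equ} directly from the commutator estimate \eqref{2.com1} via the triangle inequality, recording it in the compressed form $(1-C_p|\eb|)\|\phi_{\eb,x_0}u\|_{H^{s,p}}\le \|\phi_{\eb,x_0}(1-\Dx)^{s/2}u\|_{L^p}\le(1+C_p|\eb|)\|\phi_{\eb,x_0}u\|_{H^{s,p}}$. Your explicit treatment of the embedding $\|v\|_{L^p}\le\|v\|_{H^{s,p}}$ and of the absorption step for small $|\eb|$ merely spells out what the paper leaves implicit.
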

Indeed, according to \eqref{2.com1},
\begin{equation*}
(1-C_p|\eb|)\|\phi_{\eb,x_0}u\|_{H^{s,p}}\le \|\phi_{\eb,x_0}(1-\Dx)^{s/2}u\|_{L^p}\le(1+C_p|\eb|)\|\phi_{\eb,x_0}u\|_{H^{s,p}}.
\end{equation*}
We are now ready to define the spaces $H^{s,p}_\phi(\R^n)$ and $H^{s,p}_{b,\phi}(\R^n)$.
\begin{Def} Let $s\in\R$ and $1<p<\infty$ and let $\phi$ be a weight of sufficiently small
 exponential growth rate $\mu$. Then the norms in the spaces $H^{s,p}_\phi(\R^n)$ and $H^{s,p}_{b,\phi}(\R^n)$ are
  defined by
\begin{equation}\label{2.wh}
\|u\|_{H^{s,p}_\phi}^p:=\int_{\R^n}\phi(x_0)^p\|u\|^p_{H^{s,p}(B^1_{x_0})}\,dx_0
\end{equation}
and
\begin{equation}\label{2.whb}
\|u\|_{H^{s,p}_{b,\phi}}:=\sup_{x_0\in\R^n}\left\{\phi(x_0)\|u\|_{H^{s,p}(B^1_{x_0})}\right\}
\end{equation}
respectively.
\end{Def}
\begin{cor}\label{Cor2.Heq} Let $s\in\R$, $1<p<\infty$ and let $\phi$ be a weight function of a sufficiently
small exponential growth $\mu$. Then, for any $R>0$ and sufficiently small $\eb>\mu$, we have
\begin{multline}\label{2.hcw}
  C_1\int_{\R^n}\phi(x_0)^p\|u\|^p_{H^{s,p}(B^R_{x_0})}\,dx_0\le\\\le
   \int_{\R^n}\phi(x_0)^p\|\phi_{\eb,x_0}u\|^p_{H^{s,p}(\R^n)}\,dx_0\le
    C_2\int_{\R^n}\phi(x_0)^p\|u\|^p_{H^{s,p}(B^R_{x_0})}\,dx_0,
\end{multline}
where the constants $C_1$ and $C_2$ may depend on $R$.
\end{cor}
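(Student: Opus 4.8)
The plan is to establish \eqref{2.hcw} first in the fractional range $s\in(0,1)$, where Corollary \ref{Cor2.HT} and the commutator estimates of Proposition \ref{Prop2.com} are directly available, and then to recover arbitrary $s\in\R$ by peeling off integer-order derivatives (reducing the fractional part to the case just treated while handling the integer part as in Proposition \ref{Lem2.T}) and by duality for $s<0$. For $s\in(0,1)$ I abbreviate the outer and middle quantities in \eqref{2.hcw} by
\[
A:=\int_{\R^n}\phi(x_0)^p\|u\|^p_{H^{s,p}(B^R_{x_0})}\,dx_0,\qquad
B:=\int_{\R^n}\phi(x_0)^p\|\phi_{\eb,x_0}u\|^p_{H^{s,p}(\R^n)}\,dx_0,
\]
and I introduce the auxiliary non-local quantity $N:=\|g\|^p_{L^p_\phi}$ with $g:=(1-\Dx)^{s/2}u$. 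The goal is then to prove $A\lesssim B$, $B\sim N$ and $N\lesssim A$, which together give \eqref{2.hcw}.

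Two of these three relations are the easy part. For $B\sim N$ I apply Corollary \ref{Cor2.HT} with the weight $\phi_{\eb,x_0}$, obtaining uniformly in $x_0$ the equivalence $\|\phi_{\eb,x_0}u\|_{H^{s,p}(\R^n)}\sim\|\phi_{\eb,x_0}g\|_{L^p}=\|g\|_{L^p_{\phi_{\eb,x_0}}}$; raising to the $p$th power, multiplying by $\phi(x_0)^p$, integrating in $x_0$ and using the integral equivalence \eqref{2.weightint} (legitimate since $\eb>\mu$) converts the result into $\|g\|^p_{L^p_\phi}=N$. For $A\lesssim B$ I use that on each ball $B^R_{x_0}$ the weight $\phi_{\eb,x_0}$ and its reciprocal are smooth with derivatives controlled, uniformly in $x_0$, by \eqref{2.dw}, hence are bounded pointwise multipliers on $H^{s,p}(B^R_{x_0})$ with norms independent of $x_0$ (a consequence of Kato--Ponce \eqref{2.KP}). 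Therefore $\|u\|_{H^{s,p}(B^R_{x_0})}\le C\|\phi_{\eb,x_0}u\|_{H^{s,p}(B^R_{x_0})}\le C\|\phi_{\eb,x_0}u\|_{H^{s,p}(\R^n)}$, and integrating against $\phi(x_0)^p$ yields $A\le CB$.

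The substance of the proof is the remaining bound $N\lesssim A$, in which the non-locality of $(1-\Dx)^{s/2}$ has to be controlled by local norms of $u$. By Proposition \ref{th wsp2} I first rewrite $N\sim\int_{\R^n}\phi(x_0)^p\|g\|^p_{L^p(B^R_{x_0})}\,dx_0$, so it suffices to estimate $\|g\|_{L^p(B^R_{x_0})}$. Choosing a cut-off $\psi_{x_0}$ equal to $1$ on $B^R_{x_0}$ and supported in $B^{2R}_{x_0}$, I bound $\|g\|_{L^p(B^R_{x_0})}\le\|\psi_{x_0}(1-\Dx)^{s/2}u\|_{L^p}$ and apply the commutator estimate \eqref{2.com2}, whose constant is uniform in $x_0$ since the $\psi_{x_0}$ are translates of a fixed bump and $(1-\Dx)^{s/2}$ is translation invariant. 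This gives
\[
\|g\|_{L^p(B^R_{x_0})}\le\|(1-\Dx)^{s/2}(\psi_{x_0}u)\|_{L^p}+C\|u\|_{L^p_{\phi_{\eb,x_0}}}
\le C\|u\|_{H^{s,p}(B^{2R}_{x_0})}+C\|u\|_{L^p_{\phi_{\eb,x_0}}},
\]
the last inequality using the cut-off bound \eqref{2.V}. Raising to the $p$th power and integrating against $\phi(x_0)^p$, the first term is $\le CA$ (after integration the radius $2R$ may be replaced by $R$ through a finite-overlap covering together with the comparability \eqref{2.1} of the weight over bounded distances), while the second term equals $\|u\|^p_{L^p_\phi}$ up to constants by \eqref{2.weightint} and, since $s>0$ gives $\|u\|_{L^p(B^R_{x_0})}\le\|u\|_{H^{s,p}(B^R_{x_0})}$, is bounded by $CA$ via Proposition \ref{th wsp2}. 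Hence $N\le CA$.

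The hard point is exactly this bound $N\lesssim A$: since $(1-\Dx)^{s/2}u$ at a point depends on $u$ everywhere, the restriction of $g$ to $B^R_{x_0}$ cannot be dominated by the local $H^{s,p}$ norm of $u$ on its own, and the discrepancy is precisely the commutator $[\psi_{x_0},(1-\Dx)^{s/2}]u$. Estimate \eqref{2.com2} is what renders this discrepancy a genuinely lower-order ($L^p$) term carrying the right weighted decay, so that after integration in $x_0$ it is reabsorbed into $A$; the one thing that must be watched throughout is that every constant stays uniform in $x_0$, which is guaranteed by the translation invariance of $(1-\Dx)^{s/2}$ and the estimates \eqref{2.dw}. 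Once the case $s\in(0,1)$ is settled, the passage to general $s$ via differentiation and duality is routine.
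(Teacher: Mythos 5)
Your proposal is correct and takes essentially the same route as the paper's proof: restriction to the fractional range $s\in(0,1)$ with the general case deferred to standard reductions, the multiplier-type bound $\|u\|_{H^{s,p}(B^R_{x_0})}\le C\|\phi_{\eb,x_0}u\|_{H^{s,p}(\R^n)}$ for the left inequality, and, for the right inequality, the combination of Corollary \ref{Cor2.HT}, the norm equivalences \eqref{2.weightint} and \eqref{2.lpr}, the cut-off $\psi_{x_0}$ together with the commutator estimate \eqref{2.com2} (with constant uniform in $x_0$ by translation invariance), and reabsorption of the weighted $L^p$ error term into the localized norm. Your intermediate quantity $N=\|(1-\Dx)^{s/2}u\|^p_{L^p_\phi}$ merely makes explicit the pivot that the paper's single chain of inequalities passes through implicitly, so the two arguments coincide in substance.
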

\begin{proof} We give the proof for the case $0<s<1$ only (since we have $W^{s,p}=H^{s,p}$ for integer $s$,
the general case can be reduced to this particular one). Moreover, analogously to \eqref{2.V}, we have
\begin{equation}\label{2.cw}
\|u\|_{H^{s,p}(B^R_{x_0})}\le C\|\phi_{\eb,x_0}u\|_{H^{s,p}(B^R_{x_0})}\le
C\|\phi_{\eb,x_0}u\|_{H^{s,p}(\R^n)},
\end{equation}
so the left inequality of \eqref{2.hcw} is obvious. To prove the right inequality, we assume for simplicity
that $R=2$ and use \eqref{2.equ}, \eqref{2.com2} with $\psi=\psi_{x_0}$ and
 together with \eqref{2.lpr} to get
 \begin{multline}
 \int_{\R^n}\phi(x_0)^p\|\phi_{\eb,x_0}u\|^p_{H^{s,p}(\R^n)}\,dx_0\le\\\le
 C\int_{\R^n}\phi(x_0)^p\|\phi_{\eb,x_0}(1-\Dx)^{s/2}u\|^p_{L^p(\R^n)}\,dx_0\le\\\le
 C\int_{\R^n}\phi(x_0)^p\|(1-\Dx)^{s/2}u\|^p_{L^p(B^1_{x_0})}\,dx_0\\\le
 C\int_{\R^n}\phi(x_0)^p\|\psi_{x_0}(1-\Dx)^{s/2}u\|^p_{L^p(\R^n)}\,dx_0\\\le
 C\int_{\R^n}\phi(x_0)^p\|\phi_{\eb,x_0}u\|^p_{H^{s,p}(\R^n)}\,dx_0+
 C\int_{\R^n}\phi(x_0)^p\|u\|^p_{L^p_{\phi_{\eb,x_0}}}\,dx_0\\\le
 2C\int_{\R^n}\phi(x_0)^p\|u\|^p_{H^{s,p}(B^2_{x_0})}\,dx_0.
 \end{multline}
Here we have implicitly used that, due to \eqref{2.weightint} and \eqref{2.lpr},
$$
\int_{\R^n}\phi(x_0)^p\|u\|^p_{L^p_{\phi_{\eb,x_0}}}\,dx_0\le C'\|u\|_{L^p_\phi}^p\le
 C'\int_{\R^n}\phi(x_0)^p\|u\|^p_{L^p(B^2_{x_0})}\,dx_0
$$
and the corollary is proved.
\end{proof}
\begin{rem}
Estimates obtained in Proposition \ref{Prop2.com} and Corollaries \ref{Cor2.HT}
and \ref{Cor2.Heq} show that the results concerning embeddings and regularity in the weighted spaces
$H^{s,p}_\phi$ and $H^{s,p}_{b,\phi}$ can be obtained in the same way as for the spaces $W^{s,p}_\phi$ and $W^{s,p}_b$.
We will use this fact in a sequel without further details.
\par
Note also that the above mentioned scheme gives also the weighted
 Kato-Ponce estimate which has an independent interest:
\begin{equation}
\|uv\|_{H^{\alpha,r}_{\phi_1\phi_2}}\le C\|u\|_{L^{p_1}_{\phi_1}}\|v\|_{H^{\alpha,q_1}_{\phi_2}}+
C\|v\|_{L^{p_2}_{\phi_2}}\|u\|_{H^{\alpha,q_2}_{\phi_1}},
\end{equation}
where the exponents $\alpha,r, p_i,q_i$ are the same as in Proposition \ref{Prop2.KP} and $\phi_i$ are weights of sufficiently small
 exponential growth rate. We need not this result for what follows, so we leave
 its rigorous proof to the reader.
\end{rem}
We will systematically use in what follows the spaces of functions $u(t,x)$ which have different regularity
with respect to time $t$ and space $x$ variables, for instance $L^p(\R,L^q(\R^n))$, $L^p(\R,H^{s,q}(\R^n))$
 or/and their weighted and uniformly local analogue. In slight abuse of notations we denote by
$L^p(A,B;L^q_b(\R^n))$ and
 $L^p_b(A,B;L^q_b(\R^n))$. where $-\infty\le A<B\le\infty$, the spaces generated by the following norms:
$$
\|u\|_{L^p(A,B;L^q_b)}:=\sup_{x_0\in\R^n}\|u\|_{L^p(A,B;L^q(B^1_{x_0}))}
$$
and
 $$
 \|u\|_{L^p_b(A,B;L^q_b)}:=\sup_{x_0\in\R^n}\sup_{T\in[A,B]}\|u\|_{L^p(T,\min\{B,T+1\};L^q(B^1_{x_0}))}.
 $$
respectively. The spaces $L^p(0,T;H^{s,p}_b)$ and $L^p_b(\R,H^{s,p}_b)$ as
well as spaces $L^p_b(t,t+1;H^{s,p}_b)$ are defined analogously. Crucial is that the
supremum with respect to $x_0$ or/and $t\in\R_+$ is {\it always} taken
 {\it after} the integration in time. We will not consider other type of spaces in our paper.

\section{Linear wave equation: preliminaries and basic estimates}\label{s.lin}

In this section we give the weighted analogues of the regularity result for the following
 damped wave equation:
\begin{equation}\label{lin.eq}
\Dt^2v+\gamma\Dt v+(-\Dx+1)v=g(t),\ \ \xi_v\big|_{t=0}=\xi_0
\end{equation}
in the whole space $x\in\R^3$. Here and below $\xi_v$ stands for the pair of functions $v$ and $\Dt v$
 ($\xi_v:=\{v,\Dt v\}$). The initial data $\xi_0$ will be taken from the energy spaces
\begin{equation}\label{3.E}
\E^\alpha_{loc} := H^{1+\alpha}_{loc}(\R^3)\times H^\alpha_{loc}(\R^3),\ \alpha \in\R.
\end{equation}
or from their weighted and uniformly local analogues
 ($\E^\alpha_{\phi}$ and $\E^\alpha_b$ respectively). We will write $\E$ instead of $\E^0$.
\par
We will always assume here that $\gamma>0$ is a fixed constant and the external force $g$ satisfies
\begin{equation}\label{3.g}
g\in L^1_{loc}(\R_+,H^\alpha_{loc}(\R^3)).
\end{equation}
Let us start by recalling the classical energy estimate for solutions of \eqref{lin.eq} in the non-weighted
 case and $\alpha=0$.
\begin{prop}\label{Prop3.encl} Let $\xi_0\in\E$ and $g\in L^1_{loc}(\R_+,L^2(\R^3))$. Then problem \eqref{lin.eq} possesses
a unique solution $\xi_v\in C(\R_+,\E)$ and the following estimate holds:
\begin{equation}
\|\xi_v(t)\|^2_{\E}\le C\|\xi_v(0)\|^2_{\E}e^{-\beta t}+C\(\int_0^te^{-\beta(t-s)}\|g(s)\|_{L^2}\,ds\)^2
\end{equation}
for some positive constants $C$ and $\beta$ depending only on $\gamma$. Moreover, the function $t\to\|\xi_v(t)\|^2_{\E}$
is absolutely continuous and the following energy identity:
\begin{equation}\label{3.en}
\frac12\frac {d}{dt}\|\xi_v(t)\|^2_{\E}+\gamma\|\Dt v(t)\|^2_{L^2}=(g,\Dt v)
\end{equation}
holds
for almost all $t\in\R_+$. Here and below $(f,g):=\int_{\R^3}f(x)g(x)\,dx$ stands for
the standard inner product in $L^2(\R^3)$.
\end{prop}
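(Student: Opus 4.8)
The plan is to establish the result by the energy method, obtaining the exponential decay through a modified energy functional, while deferring existence and uniqueness to standard linear semigroup theory. Since $f=0$, equation \eqref{lin.eq} is linear and the operator $\mathbb A=\left(\begin{smallmatrix} 0 & 1\\ \Dx-1 & -\gamma\end{smallmatrix}\right)$ generates a strongly continuous semigroup on $\E=H^1(\R^3)\times L^2(\R^3)$ (checked, e.g., via the Lumer--Phillips theorem after equipping $\E$ with the natural energy inner product); together with Duhamel's formula this produces a unique mild solution $\xi_v\in C(\R_+,\E)$ for $\xi_0\in\E$ and $g\in L^1_{loc}(\R_+,L^2)$, so uniqueness and the continuity $\xi_v\in C(\R_+,\E)$ come for free. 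The substance of the statement is therefore the energy identity \eqref{3.en} and the dissipative estimate.

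First I would derive \eqref{3.en} formally by multiplying \eqref{lin.eq} by $\Dt v$ and integrating over $\R^3$: the term $(\Dt^2 v,\Dt v)=\frac12\frac{d}{dt}\|\Dt v\|_{L^2}^2$, while $((-\Dx+1)v,\Dt v)=\frac12\frac{d}{dt}(\|\Nx v\|_{L^2}^2+\|v\|_{L^2}^2)$ after integrating by parts in the Laplacian term; this gives exactly \eqref{3.en} since $\|\xi_v\|_\E^2=\|\Dt v\|_{L^2}^2+\|\Nx v\|_{L^2}^2+\|v\|_{L^2}^2$.

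The pure energy identity only yields dissipation of $\Dt v$, not of the full energy, so to obtain exponential decay I would introduce the modified functional $\mathcal L_\delta(t):=\frac12\|\xi_v(t)\|_\E^2+\delta(v(t),\Dt v(t))$ with $\delta>0$ small. Differentiating and using \eqref{lin.eq} to replace $\Dt^2 v$, a short computation gives
\begin{multline*}
\frac{d}{dt}\mathcal L_\delta=-(\gamma-\delta)\|\Dt v\|_{L^2}^2-\delta\|\Nx v\|_{L^2}^2-\delta\|v\|_{L^2}^2\\-\delta\gamma(v,\Dt v)+(g,\Dt v+\delta v).
\end{multline*}
For $\delta$ small enough the cross term $-\delta\gamma(v,\Dt v)$ is absorbed by Young's inequality and the quadratic part becomes negative definite and comparable to $-\beta\mathcal L_\delta$; simultaneously $|\delta(v,\Dt v)|\le\frac{\delta}{2}\|\xi_v\|_\E^2$ makes $\mathcal L_\delta$ equivalent to $\frac12\|\xi_v\|_\E^2$. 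Estimating the forcing term by $(g,\Dt v+\delta v)\le C\|g\|_{L^2}\|\xi_v\|_\E$ and passing to $\sqrt{\mathcal L_\delta}$ turns this into the differential inequality $\frac{d}{dt}\sqrt{\mathcal L_\delta}\le-\beta\sqrt{\mathcal L_\delta}+C\|g\|_{L^2}$ (after relabeling $\beta$); integrating it and squaring yields the claimed estimate with the convolution $\int_0^t e^{-\beta(t-s)}\|g(s)\|_{L^2}\,ds$.

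The main obstacle is rigor rather than discovery: for data merely in $\E$ with $g\in L^1_{loc}(\R_+,L^2)$ the solution need not be smooth enough to multiply by $\Dt v$ pointwise in $t$, so the computations above are only formal. I would handle this by approximation, regularizing $\xi_0$ and $g$ so that the corresponding solutions are strong (e.g.\ taking data in the domain of $\mathbb A$, or via spectral/Fourier truncation), justifying both \eqref{3.en} and the estimate for these, and then passing to the limit using the continuous dependence provided by the semigroup. The limiting argument also delivers the absolute continuity of $t\mapsto\|\xi_v(t)\|_\E^2$, since the regularized energies converge in $C([0,T])$ and their time derivatives are controlled in $L^1$ uniformly.
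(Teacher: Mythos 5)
Your proof is correct and is essentially the standard argument: the paper itself does not prove this proposition but defers to \cite{BV,Te}, where exactly this scheme is carried out --- semigroup theory for well-posedness, the energy identity from the multiplier $\Dt v$, and exponential decay via the perturbed functional, i.e.\ the multiplier $\Dt v+\delta v$, justified by approximation (the same multiplier trick the paper uses in its weighted estimates, e.g.\ in the proof of Proposition \ref{Lem5.v}). The only cosmetic point is that passing from $\frac{d}{dt}\mathcal L_\delta\le-2\beta\mathcal L_\delta+C\|g\|_{L^2}\sqrt{\mathcal L_\delta}$ to the inequality for $\sqrt{\mathcal L_\delta}$ requires the usual care where $\mathcal L_\delta$ vanishes (e.g.\ work with $\sqrt{\mathcal L_\delta+\eb}$ and let $\eb\to0$), which is routine.
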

For the proof of this result, see e.g., \cite{BV,Te}.
\par
The next technical tool is the so-called Strichartz estimates which are crucial
 for the study of the non-linear case.
\begin{prop}\label{Prop3.str} Under the assumptions of Proposition \ref{Prop3.encl} the solution $v$
satisfies the following estimate:
\begin{equation}\label{3.str}
\|v\|_{L^p(0,1;L^{\frac{6p}{p-2}})}\le C_p\(\|\xi_v(0)\|_{\E}+\|g\|_{L^1(0,1;L^2)}\)
\end{equation}
for all $p\in(2,\infty]$.
\end{prop}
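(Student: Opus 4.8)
The plan is to derive the Strichartz estimate \eqref{3.str} for the damped equation \eqref{lin.eq} by treating the damping term $\gamma\Dt v$ and the zeroth-order term $v$ as perturbations of the standard wave equation and appealing to the known Strichartz estimates for $\Dt^2 v - \Dx v = h$ on $\R^3$. First I would rewrite \eqref{lin.eq} as $\Dt^2 v - \Dx v = h(t)$, where $h(t) := g(t) - \gamma\Dt v(t) - v(t)$ is treated as a source term. The classical (non-endpoint) Strichartz estimate for the free wave equation on $\R^3$ states that a solution of $\Dt^2 v - \Dx v = h$ on the time interval $(0,1)$ satisfies
\begin{equation*}
\|v\|_{L^p(0,1;L^{q})}\le C_p\(\|\xi_v(0)\|_{\E}+\|h\|_{L^1(0,1;L^2)}\),
\end{equation*}
for the admissible pair $\tfrac1p+\tfrac3q=\tfrac12$ with $p\in(2,\infty]$, which gives exactly the Lebesgue exponent $q=\tfrac{6p}{p-2}$ appearing in \eqref{3.str}. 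This reduces matters to controlling $\|h\|_{L^1(0,1;L^2)}$.

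Next I would estimate the source term directly:
\begin{equation*}
\|h\|_{L^1(0,1;L^2)}\le \|g\|_{L^1(0,1;L^2)}+\gamma\|\Dt v\|_{L^1(0,1;L^2)}+\|v\|_{L^1(0,1;L^2)}.
\end{equation*}
Both of the latter two terms are controlled by the energy norm: by Proposition \ref{Prop3.encl}, the solution lies in $C(\R_+,\E)$ with $\sup_{t\in[0,1]}\|\xi_v(t)\|_{\E}\le C(\|\xi_v(0)\|_{\E}+\|g\|_{L^1(0,1;L^2)})$, and since the time interval has length one, the $L^1(0,1;L^2)$ norms of $\Dt v$ and of $v$ are bounded by $\sup_{t\in[0,1]}\|\xi_v(t)\|_{\E}$. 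Substituting this bound back yields
\begin{equation*}
\|h\|_{L^1(0,1;L^2)}\le C\(\|\xi_v(0)\|_{\E}+\|g\|_{L^1(0,1;L^2)}\),
\end{equation*}
and combining with the free Strichartz estimate gives \eqref{3.str}.

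The main obstacle I anticipate is purely one of bookkeeping rather than of genuine difficulty: one must ensure that the Strichartz estimate is being applied to the equation with the correct spatial operator. The stated equation carries the operator $-\Dx+1$ rather than $-\Dx$, so I would either absorb the extra $v$ into the source $h$ as above, or equivalently observe that the zeroth-order shift only improves the estimate. A secondary point to handle carefully is the endpoint $p=\infty$, for which the pair $(p,q)=(\infty,6)$ is admissible in three space dimensions and the estimate reduces to the Sobolev embedding $H^1(\R^3)\subset L^6(\R^3)$ applied to the energy bound; this case can be treated separately and does not require the full Strichartz machinery. Since we work only on the unit time interval $(0,1)$ and all constants depend on $\gamma$ and $p$ alone, no issue of uniformity in $t$ arises at this stage.
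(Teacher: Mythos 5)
Your proposal is correct, and it matches the paper's intent: the paper gives no written proof of Proposition \ref{Prop3.str}, citing \cite{sogge,straus,tao} for the Strichartz estimates, and your argument — absorbing $\gamma\Dt v$ and the zeroth-order term into the source, bounding $\|h\|_{L^1(0,1;L^2)}$ via the energy estimate of Proposition \ref{Prop3.encl}, and invoking the free-wave estimate for the $\dot H^1$-admissible pairs $\tfrac1p+\tfrac3q=\tfrac12$ — is precisely the standard reduction those references presuppose. Your separate treatment of $p=\infty$ via $H^1(\R^3)\subset L^6(\R^3)$ is also the right bookkeeping, since only the pairs with $p>2$ fall under the non-endpoint Strichartz machinery.
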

For the proof of this estimate, see \cite{sogge,straus,tao}.
\begin{rem} The most important case for us is $p=4$ which gives $L^4(L^{12})$-estimate
 for the solution $v$. To control the nonlinearity we also need $p=5$ which however can be derived
  from $p=4$ and the energy estimate by using the following interpolation inequality:
  \begin{equation}\label{3.5int}
  \|v\|_{L^5(L^{10})}^5\le C\|v\|_{L^4(L^{12})}^4\|v\|_{L^\infty(L^6)}\le
   C\|v\|^4_{L^4(L^{12})}\|\xi_v\|_{L^\infty(\E)},
  \end{equation}
so we will state below the estimates for $p=4$ only.
\end{rem}
Combining Propositions \ref{Prop3.encl} and \ref{Prop3.str}, we get the following result.

\begin{cor}\label{th.lin.damp} Let $\xi_v(0)\in \E^\alpha$
 and $g\in L^1_{loc}(\R,H^\alpha(\R^3))$ for some $\alpha\in\R$. Then, for all $\beta\in (0,\beta_0]$,
the solution $v(t)$ of problem \eqref{lin.eq} possesses the following estimate:
\begin{multline}\label{lin.est}
	\|\xi_v(t)\|_{\E^\alpha}+\(\int_0^t e^{-4\beta(t-s)}\|v(s)\|^4_{H^{\alpha,12}}\,ds\)^{1/4}\le\\\le
 C\left(\|\xi_v(0)\|_{\E^{\alpha}}e^{-\beta t}+\int_0^te^{-\beta(t-s)}\|g(s)\|_{H^\alpha}\,ds\right),
\end{multline}
where the positive constants $C$ and  $\beta_0$ are independent of $t\geq 0$, $v$ and $g$.
\end{cor}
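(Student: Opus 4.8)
The plan is to combine the energy estimate from Proposition \ref{Prop3.encl} with the Strichartz estimate from Proposition \ref{Prop3.str}, and then to bootstrap the local-in-time Strichartz bound \eqref{3.str} on a fixed interval into the exponentially weighted global-in-time bound \eqref{lin.est} via a standard interval-decomposition argument. The key conceptual point is that the equation \eqref{lin.eq} is \emph{linear} with constant coefficients, so differentiating the equation does not change its structure: the function $w:=(1-\Dx)^{\alpha/2}v$ solves the same damped wave equation with data $\xi_w(0)=(1-\Dx)^{\alpha/2}\xi_0\in\E$ and forcing $(1-\Dx)^{\alpha/2}g\in L^1_{loc}(\R_+,L^2)$. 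Since $\|(1-\Dx)^{\alpha/2}h\|_{L^2}=\|h\|_{H^\alpha}$ and $\|(1-\Dx)^{\alpha/2}v\|_{L^{12}}=\|v\|_{H^{\alpha,12}}$ by the very definition of Bessel-potential spaces, the $\E^\alpha$ and $H^{\alpha,12}$ norms of $v$ equal the $\E$ and $L^{12}$ norms of $w$, and one reduces immediately to the case $\alpha=0$. I would carry out this reduction first, so that for the rest of the argument it suffices to treat $\alpha=0$.

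For the case $\alpha=0$, the energy part $\|\xi_v(t)\|_\E\le C\|\xi_v(0)\|_\E e^{-\beta t}+C\int_0^t e^{-\beta(t-s)}\|g(s)\|_{L^2}\,ds$ is exactly Proposition \ref{Prop3.encl} (after taking square roots and using $\sqrt{a^2+b^2}\le a+b$ together with subadditivity of the convolution term). The substantive step is the Strichartz part. The plan is to partition $\R_+$ into unit intervals $[n,n+1]$. On each such interval I apply Proposition \ref{Prop3.str} with $p=4$ translated to the interval $[n,n+1]$, obtaining
\begin{equation*}
\|v\|_{L^4(n,n+1;L^{12})}\le C\Big(\|\xi_v(n)\|_\E+\|g\|_{L^1(n,n+1;L^2)}\Big).
\end{equation*}
Then I insert the already-established dissipative energy bound for $\|\xi_v(n)\|_\E$, which decays like $e^{-\beta n}$ plus a convolution of $g$. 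The weight $e^{-4\beta(t-s)}$ in the left-hand integral of \eqref{lin.est} is essentially constant (up to a factor $e^{\pm4\beta}$) on each unit interval $s\in[n,n+1]$ for $t\ge n+1$, so the weighted integral $\int_0^t e^{-4\beta(t-s)}\|v(s)\|^4_{L^{12}}\,ds$ is comparable to $\sum_{n} e^{-4\beta(t-n)}\|v\|^4_{L^4(n,n+1;L^{12})}$.

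The final step is to sum the geometric-type series and check it reproduces the right-hand side of \eqref{lin.est} after taking the fourth root. Substituting the per-interval bound gives, schematically, a sum $\sum_{n\le t} e^{-4\beta(t-n)}\big(\|\xi_v(0)\|_\E e^{-\beta n}+\int_0^n e^{-\beta(n-s)}\|g\|\,ds\big)^4$; the geometric factors in $n$ converge and, after taking the fourth root, collapse to the claimed $C(\|\xi_v(0)\|_\E e^{-\beta t}+\int_0^t e^{-\beta(t-s)}\|g(s)\|\,ds)$ form. The main obstacle I anticipate is purely bookkeeping: carefully matching the exponential decay rates coming from the energy estimate against the weight $e^{-4\beta(t-s)}$, discrete-summing the forcing convolution, and controlling the interaction of the fourth power with the sum (one should use $\big(\sum a_n\big)^{1/4}$ versus $\sum a_n^{1/4}$ estimates or discrete Young/Minkowski inequalities to avoid losing the exponential gain). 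Choosing $\beta_0$ small enough relative to the decay rate $\beta$ furnished by Proposition \ref{Prop3.encl}, and possibly shrinking $\beta$ inside the convolution to absorb the per-interval constants, is what makes all the series converge with constants independent of $t$, $v$, and $g$.
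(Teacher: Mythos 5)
Your proposal is correct and takes essentially the same route as the paper: the paper proves the corollary by reducing the general case $\alpha\in\R$ to $\alpha=0$ via applying $(1-\Dx)^{\alpha/2}$ to the equation (exactly your first step, legitimate since $\|\cdot\|_{H^{\alpha,12}}=\|(1-\Dx)^{\alpha/2}\cdot\|_{L^{12}}$ by the Bessel-potential definition), and for $\alpha=0$ it invokes \cite{SZ-UMN}, where the argument is precisely your combination of the exponential energy decay of Proposition \ref{Prop3.encl} with the local Strichartz estimate of Proposition \ref{Prop3.str} on unit intervals, summed as a geometric series. Your flagged bookkeeping points, namely taking $\beta_0$ strictly below the energy decay rate so the series gains a convergent factor, and using $\ell^1\hookrightarrow\ell^4$ (rather than termwise bounds) to control the fourth powers of the forcing convolution, are exactly the right ones.
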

The proof of this estimate is straightforward and can be found in \cite{SZ-UMN}. We mention here only that the general case
 $\alpha\in\R$ is reduced to the case $\alpha=0$ by applying the operator $(1-\Dx)^{\alpha/2}$ to both sides
  of equation \eqref{lin.eq}.
  \par
We will also need the finite speed propagation estimate for solutions of \eqref{lin.eq}.
\begin{prop}\label{Prop3.fin} Let the assumptions of Proposition \ref{Prop3.encl} hold,
$x_0\in\R^3$ and $R\in\R_+$. Then the solution $v$ of problem \eqref{lin.eq}
satisfies the following estimate:
\begin{equation}\label{3.cone}
\|\xi_v(t)\|_{\E(B^{R-t}_{x_0})}\le C\|\xi_v(0)\|_{\E(B^R_{x_0})}+C\int_0^t\|g(s)\|_{L^2(B^{R-s}_{x_0})}\,ds,
\end{equation}
where $0\le t<R$ and the constant $C$ is independent of $R$,  $x_0$ and $t$.
\end{prop}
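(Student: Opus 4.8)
The plan is to run the classical local-energy/light-cone argument for the damped wave operator, which is the standard route to finite speed of propagation. First I would introduce the local energy on the shrinking balls
$$
E(t):=\frac12\int_{B^{R-t}_{x_0}}\(|\Dt v|^2+|\Nx v|^2+|v|^2\)\,dx,
$$
so that $2E(t)=\|\xi_v(t)\|^2_{\E(B^{R-t}_{x_0})}$; note that the zeroth-order term $|v|^2$ is included precisely because the spatial operator is $-\Dx+1$, which matches the $\E=H^1\times L^2$ norm. Since the radius shrinks at the unit propagation speed of the wave operator, differentiating in $t$ produces a bulk term together with a boundary flux coming from the moving sphere:
$$
\frac{d}{dt}E(t)=\int_{B^{R-t}_{x_0}}\Dt e(v)\,dx-\int_{\partial B^{R-t}_{x_0}}e(v)\,dS,\qquad e(v):=\frac12\(|\Dt v|^2+|\Nx v|^2+|v|^2\).
$$

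Next, substituting $\Dt^2v=-\gamma\Dt v+\Dx v-v+g$ from \eqref{lin.eq}, a direct computation gives
$$
\Dt e(v)=-\gamma|\Dt v|^2+\operatorname{div}(\Dt v\,\Nx v)+g\,\Dt v,
$$
where the two $v\,\Dt v$ contributions cancel exactly thanks to the $+v$ term. Integrating the divergence by parts turns it into the boundary flux $\int_{\partial B}\Dt v\,(\Nx v\cdot n)\,dS$, which I combine with the $-\int_{\partial B}e(v)\,dS$ arising from the moving domain. The crucial point, and the heart of finite speed of propagation, is that on the sphere Young's inequality gives $\Dt v\,(\Nx v\cdot n)\le\frac12|\Dt v|^2+\frac12|\Nx v|^2$, so the net boundary integrand is bounded above by $-\frac12|v|^2\le0$. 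Hence all boundary contributions are nonpositive and may be discarded, leaving
$$
\frac{d}{dt}E(t)\le-\gamma\int_{B^{R-t}_{x_0}}|\Dt v|^2\,dx+\int_{B^{R-t}_{x_0}}g\,\Dt v\,dx\le \|g(t)\|_{L^2(B^{R-t}_{x_0})}\sqrt{2E(t)}.
$$

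Finally, setting $y(t):=\sqrt{2E(t)}=\|\xi_v(t)\|_{\E(B^{R-t}_{x_0})}$, the relation $2yy'=2E'\le 2\|g(t)\|_{L^2(B^{R-t}_{x_0})}\,y$ reduces, on the set where $y>0$, to $y'\le\|g(t)\|_{L^2(B^{R-t}_{x_0})}$; integrating from $0$ to $t$ yields exactly \eqref{3.cone} with $C=1$. The only genuine subtlety is that the transport formula and the integration by parts require enough pointwise regularity to be justified rigorously. I would therefore first carry out the computation for smooth solutions coming from smooth approximating data, and then pass to the limit using the well-posedness and continuous dependence in $C(\R_+,\E)$ provided by Proposition \ref{Prop3.encl}. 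The main obstacle is thus not the energy identity itself but this approximation/regularization step, together with the care needed in differentiating an integral over the moving domain.
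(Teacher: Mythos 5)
Your proof is correct and is precisely the classical light-cone energy argument that the paper itself does not reproduce but delegates to \cite{sogge}: the cancellation of the $v\,\Dt v$ terms (due to the $+v$ coming from $-\Dx+1$), the nonpositivity of the combined boundary flux via Young's inequality, the dropping of the damping term $-\gamma|\Dt v|^2\le 0$, the Gr\"onwall step for $y=\sqrt{2E}$ (which one regularizes, e.g.\ via $y_\eb=\sqrt{2E+\eb^2}$, to handle zeros of $y$), and the final approximation by smooth data justified through the continuous dependence of Proposition \ref{Prop3.encl} are all handled correctly. In fact your argument yields \eqref{3.cone} with $C=1$, slightly sharper than stated.
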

See e.g., \cite{sogge} for the proof of this estimate.

\begin{rem} Let us define a cone
$$
C^R_{x_0}=\{(t,x)\in\R_+\times\R^3\,, |x-x_0|\le R-t\}.
$$
Then, estimate \eqref{3.cone} shows that the values of $v\big|_{C^R_{x_0}}$ depend only on
the values of $\xi_v(0)\big|_{B^R_{x_0}}$
and the values of $g\big|_{C^R_{x_0}}$. In particular, if for two solutions $v_1,v_2\in C_{loc}(\R_+,\E_{loc})$
 of equation \eqref{lin.eq} and we know that
$$
 \xi_{v_1}(0)\big|_{B^R_{x_0}}=\xi_{v_2}(0)\big|_{B^R_{x_0}} \ \text{and}\ \ g_1\big|_{C^R_{x_0}}
 =g_2\big|_{C^R_{x_0}},
$$
then $v_1\big|_{C^R_{x_0}}=v_2\big|_{C^R_{x_0}}$. In particular, this property allows us to verify
 the existence and uniqueness of solution $\xi_v\in C_{loc}(\R_+,\E_{loc})$ (with the initial
 data $\xi_0\in\E_{loc}$ and $g\in L^1_{loc}(\R_+,L^2_{loc})$) using Proposition \ref{Prop3.encl}
 for square integrable case. We will use this idea in the non-linear case as well.
\end{rem}
We conclude the section by  the weighted analogue of estimate \eqref{lin.est}.
\begin{cor}\label{lin.th.astr} Let $\eb\in\R$ be a sufficiently small positive number, $x_0\in\R^3$ and
$\alpha\in[0,1]$.
Let also $\xi_0\in \E^\alpha_{\phi_{\eb,x_0}}$ and
$g\in L^1_{loc}(\R,H^\alpha_{\phi_{\eb,x_0}}(\R^3))$. Then the solution $v(t)$ of problem
 \eqref{lin.eq} possesses the following estimate:
\begin{multline}\label{3.al-weight}
\|\xi_v(t)\|_{\E^\alpha_{\phi_{\eb,x_0}}}+\(\int_0^te^{-4\beta(t-s)}
\|v(s)\|^4_{H^{\alpha,12}_{\phi_{\eb,x_0}}}\,ds\)^{1/4}\le\\
	 C\|\xi_0\|_{\E^\alpha_{\phi_{\eb,x_0}}}e^{-\beta t}+
C\int_0^te^{-\beta(t-s)}\|g(s)\|_{H^\alpha_{\phi_{\eb,x_0}}}\,ds,
	\end{multline}
	where positive constants $C$ and  $\beta$ are independent
 of $t\geq 0$ and $\xi$, $g$ and $\eb$.
\end{cor}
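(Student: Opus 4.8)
The goal is to prove Corollary~\ref{lin.th.astr}, the weighted analogue of the energy-to-Strichartz estimate \eqref{lin.est}. The natural strategy is exactly the one illustrated by the paper in the proof of Corollary~\ref{Cor2.reg}: reduce the weighted estimate to the already-established non-weighted one (Corollary~\ref{th.lin.damp}) by conjugating the equation with the multiplication operator $T_{\phi_{\eb,x_0}}$ and treating the commutator as a small, lower-order perturbation. Let me sketch this.

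\textbf{Step 1: conjugation.} I would set $w:=\phi_{\eb,x_0}v=T_{\phi_{\eb,x_0}}v$ and compute the equation satisfied by $w$. Since $\phi_{\eb,x_0}$ is time-independent, the terms $\Dt^2 w$ and $\gamma\Dt w$ are unaffected, while the elliptic part produces a commutator: $(-\Dx+1)w=\phi_{\eb,x_0}(-\Dx+1)v-[\Dx,\phi_{\eb,x_0}]v$, where $[\Dx,\phi_{\eb,x_0}]v=2\Nx\phi_{\eb,x_0}\cdot\Nx v+(\Dx\phi_{\eb,x_0})v$. Rewriting everything in terms of $w$ (replacing $\Nx v$ and $v$ by $\Nx w$, $w$ up to further factors of $\phi_{\eb,x_0}/\phi_{-\eb,x_0}$, as in \eqref{2.per}), one gets
\begin{equation}\label{3.pertlin}
\Dt^2 w+\gamma\Dt w+(-\Dx+1)w=\phi_{\eb,x_0}g+B_{\eb,x_0}w,
\end{equation}
where $B_{\eb,x_0}$ is a first-order differential operator whose coefficients are controlled by \eqref{2.dw}, so that $\|B_{\eb,x_0}w\|_{H^\alpha}\le C\eb\|w\|_{H^{1+\alpha}}$, i.e. $B_{\eb,x_0}w$ costs one factor of $\eb$ and one derivative below the energy level.

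\textbf{Step 2: treat the commutator as an external force.} I would view \eqref{3.pertlin} as the linear equation \eqref{lin.eq} with data $\phi_{\eb,x_0}\xi_0$ and right-hand side $\phi_{\eb,x_0}g+B_{\eb,x_0}w$, and apply the non-weighted estimate \eqref{lin.est} from Corollary~\ref{th.lin.damp}. By Proposition~\ref{Lem2.T} (and its fractional analogue noted after the definition of $H^{s,p}_\phi$), the non-weighted norms of $w=T_{\phi_{\eb,x_0}}v$ are equivalent, uniformly in $x_0$ and $|\eb|\le1$, to the weighted norms of $v$: $\|w\|_{H^{1+\alpha}}\sim\|v\|_{H^{1+\alpha}_{\phi_{\eb,x_0}}}$, $\|w\|_{H^{\alpha,12}}\sim\|v\|_{H^{\alpha,12}_{\phi_{\eb,x_0}}}$, and similarly for the energy and force terms. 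Plugging in the bound $\|B_{\eb,x_0}w(s)\|_{H^\alpha}\le C\eb\|w(s)\|_{H^{1+\alpha}}\le C\eb\|\xi_w(s)\|_{\E^\alpha}$ yields
\begin{multline*}
\|\xi_w(t)\|_{\E^\alpha}+\Big(\int_0^t e^{-4\beta(t-s)}\|w(s)\|^4_{H^{\alpha,12}}\,ds\Big)^{1/4}\le
C\|\xi_w(0)\|_{\E^\alpha}e^{-\beta t}\\+C\int_0^t e^{-\beta(t-s)}\|\phi_{\eb,x_0}g(s)\|_{H^\alpha}\,ds
+C\eb\int_0^t e^{-\beta(t-s)}\|\xi_w(s)\|_{\E^\alpha}\,ds.
\end{multline*}

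\textbf{Step 3: absorb the perturbation via Gronwall.} The last term is the crux: it must be absorbed into the left-hand side while preserving the exponential decay $e^{-\beta t}$. I expect this to be the main obstacle, since naive absorption would only work pointwise in $t$, whereas here the perturbation appears under a time integral against the decaying kernel $e^{-\beta(t-s)}$. The clean way around it is to introduce $\Phi(t):=\sup_{0\le s\le t}e^{\beta s}\|\xi_w(s)\|_{\E^\alpha}$; multiplying the energy part of the inequality by $e^{\beta t}$ gives $e^{\beta t}\|\xi_w(t)\|_{\E^\alpha}\le C\|\xi_w(0)\|_{\E^\alpha}+C\int_0^t e^{\beta s}\|\phi_{\eb,x_0}g\|_{H^\alpha}\,ds+C\eb\int_0^t \Phi(s)\,ds$, whence a Gronwall argument on $\Phi$ closes the estimate provided $\eb$ is small; alternatively, for $\beta<\beta_0$ one simply chooses $C\eb/(\beta_0-\beta)\le 1/2$ so the convolution term is directly absorbed. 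Either route fixes the smallness threshold on $\eb$ announced in the statement. Finally, translating back from $w$ to $v$ through the uniform norm-equivalences of Proposition~\ref{Lem2.T} gives exactly \eqref{3.al-weight}, with constants independent of $t$, $\xi_0$, $g$ and $\eb$; the uniformity in $x_0$ is automatic because, by \eqref{2.dw}, all constants entering the perturbation bound depend only on $\eb$ and the fixed dimension, not on the center $x_0$.
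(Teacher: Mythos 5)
Your proposal follows essentially the same route as the paper: the authors also conjugate with $T_{\phi_{\eb,x_0}}$, obtain the perturbed equation for $V=\phi_{\eb,x_0}v$ with the commutator $B_{\eb,x_0}V$ satisfying $\|B_{\eb,x_0}V\|_{H^\alpha}\le C\eb\|V\|_{H^{1+\alpha}}$, and absorb it as a small perturbation into the non-weighted estimate \eqref{lin.est} for $\eb$ small, exactly as in Corollary \ref{Cor2.reg}. Your Step 3 merely spells out the Gronwall-type absorption that the paper leaves implicit in the phrase ``treating the term $B_{\eb,x_0}V$ as a perturbation,'' and it is correct (note one may simply shrink $\beta$, since the non-weighted estimate holds for all $\beta\in(0,\beta_0]$).
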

\begin{proof} We give the proof for the fractional case $\alpha\in(0,1)$ only. The case $\alpha=0$
 is much simpler and we leave the proof to the reader and the case $\alpha=1$ can be reduced to $\alpha=0$
  by differentiation of the equation in $x$.
\par
 We use the trick with isomorphism
$T_{\eb,x_0}: H^{\alpha,p}_{\phi_{\eb,x_0}}(\R^3)\to H^{\alpha,p}(\R^3)$, see estimate
\eqref{2.equ} and \eqref{2.weightint}, described in the proof of Corollary \ref{Cor2.reg}. Namely,
for proving the weighted estimate \eqref{3.al-weight}, it is sufficient to estanblish its
 non-weighted analogue \eqref{lin.est} for the function $V=\phi_{\eb,x_0}v$ which satisfies
 the perturbed analogue of \eqref{lin.eq}:
\begin{equation}\label{lin.eqper}
\Dt^2V+\gamma\Dt V+(-\Dx+1)V=\phi_{\eb,x_0}g(t)+B_{\eb,x_0}V,\ \ \xi_v\big|_{t=0}=\phi_{\eb,x_0}\xi_0,
\end{equation}
where the operator $B_{\eb,x_0}$ is the same as in \eqref{2.per} and, therefore, satisfies the estimate
$$
\|B_{\eb,x_0}V\|_{H^\alpha}\le C\eb\|V\|_{H^{1+\alpha}}.
$$
Thus, estimate \eqref{lin.est} for $V$ follows from the analogous estimate for $v$
 (treating the term $B_{\eb,x_0}V$ as a perturbation)
 if $\eb$ is small enough. This proves the corollary.
\end{proof}

\section{Quintic wave equation: well-posedness and dissipativity}\label{s4}

The aim of this section is to study the  infinite-energy solutions
 to the following semi-linear weakly damped wave equation:
\begin{equation}\label{eq.main}
\Dt^2u+\gamma\Dt u+(-\Dx+1)u +f(u)=g(t),\ \ \xi_u\big|_{t=0}=\xi_0
\end{equation}
in the whole space $x\in\R^3$. It is assumed that the nonlinearity $f\in C^2(\R)$ has quintic growth rate:
\begin{equation}\label{4.f}
f(u)=u^5+h(u),\ \ |h''(u)|\le C(1+|u|^q),\ \ h(0)=0
\end{equation}
for some exponent $0\le q<3$. We start with a general case where the  initial data  $\xi_0\in\E_{loc}$ and
$g\in L^1_{loc}(\R_+,L^2_{loc}(\R^3))$, so we do not pose up to the moment any restrictions on
 the growth of the solution as $|x|\to\infty$.

\begin{Def}\label{Def4.sol} A function $u(t)$ such that $\xi_u(t)\in C_{loc}([0,\infty);\E_{loc})$
is a Shatah-Struwe (SS)
 solution of problem \eqref{eq.main} if  $\xi_u\big|_{t=0}=\xi_0$,
\begin{multline}
-\int_0^T(\Dt u,\Dt \phi)dt+\gamma\int_0^T(\Dt u,\phi)dt+\int_0^T(\nabla u,\nabla \phi)dt\,+\\
\int_0^T(u,\phi)dt+\int_0^T(f(u),\phi)dt=\int_0^T(g,\phi)dt,
\end{multline}
for all test functions $\varphi\in C^\infty_0((0,\infty)\times\R^3)$ and, in addition, the
following extra space-time regularity holds:
\begin{equation}\label{4.str}
u\in L^4_{loc}([0,\infty),L^{12}_{loc}(\R^3)).
\end{equation}
\end{Def}

\begin{rem} \label{Rem4.flin} As in the case of finite-energy solutions, extra regularity
 \eqref{4.str} is crucial for the uniqueness of the solution $u$. To the best of our knowledge the
  uniqueness of energy solutions is not known without this assumption even in the finite-energy case.
   Moreover, this assumption is also used in order to derive finite speed propagation inequalities
    which are crucial for the existence result as well.
    \par
    We mention  that, due to estimate \eqref{3.5int} and growth restriction on $f$ this extra
    regularity gives us also that
    \begin{equation}
     f(u)\in L^1_{loc}([0,\infty),L^2_{loc}(\R^3))
    \end{equation}
and, therefore, we may treat the non-linearity $f(u)$ as an external force and use estimate
 \eqref{3.cone} for the obtained linear equation.
\end{rem}

The last remark allows us to verify the uniqueness of SS-solutions.

\begin{prop}\label{Prop4.cone} Let the function $f$ satisfy \eqref{4.f}. Then, for every two SS-solutions $u_1$ and $u_2$
of equation \eqref{eq.main} (which correspond to different initial data and external forces) and
 every $R>0$, $x_0\in\R^3$,
the following analogue of estimate \eqref{3.cone} holds:
\begin{multline}\label{4.cone}
\|\xi_{u_1}(t)-\xi_{u_2}(t)\|_{\E(B^{R-t}_{x_0})}\le\\\le
 C\|\xi_{u_1}(0)-\xi_{u_2}(0)\|_{\E(B^R_{x_0})}+C\int_0^t\|g_1(s)-g_2(s)\|_{L^2(B^{R-s}_{x_0})}\,ds,
\end{multline}
where $0<t<R$ and  the constant $C$ depends on $R$, $x_0$ and the proper Strichartz norms of $u_1$ and $u_2$. In particular,
SS-solution of \eqref{eq.main} is unique.
\end{prop}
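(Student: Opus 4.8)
The plan is to reduce the nonlinear finite-speed-of-propagation estimate \eqref{4.cone} to the linear one \eqref{3.cone} by treating the difference of the nonlinearities as an external force, and to close the resulting inequality via Gronwall's lemma applied on a family of shrinking cones. Set $w:=u_1-u_2$ and $G:=g_1-g_2$. Subtracting the two copies of equation \eqref{eq.main}, the function $w$ solves the linear damped wave equation \eqref{lin.eq} with right-hand side $G(t)-(f(u_1)-f(u_2))$ and initial data $\xi_w(0)=\xi_{u_1}(0)-\xi_{u_2}(0)$. As noted in Remark \ref{Rem4.flin}, the Strichartz regularity \eqref{4.str} together with the growth condition \eqref{4.f} guarantees that $f(u_i)\in L^1_{loc}([0,\infty),L^2_{loc})$, so this is a legitimate linear problem and the finite speed estimate \eqref{3.cone} applies on any cone $C^R_{x_0}$.

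First I would apply \eqref{3.cone} to $w$ on the cone $C^R_{x_0}$, which yields, for $0\le t<R$,
\begin{equation}
\|\xi_w(t)\|_{\E(B^{R-t}_{x_0})}\le C\|\xi_w(0)\|_{\E(B^R_{x_0})}
+C\int_0^t\|G(s)\|_{L^2(B^{R-s}_{x_0})}\,ds
+C\int_0^t\|f(u_1(s))-f(u_2(s))\|_{L^2(B^{R-s}_{x_0})}\,ds.
\end{equation}
The first two terms are exactly the ones appearing on the right-hand side of the claimed estimate \eqref{4.cone}, so everything hinges on controlling the last integral. Here I would use the pointwise bound, valid by the mean value theorem and \eqref{4.f},
\begin{equation}
|f(u_1)-f(u_2)|\le C\bigl(1+|u_1|^4+|u_2|^4\bigr)|u_1-u_2|,
\end{equation}
and then estimate its $L^2(B^{R-s}_{x_0})$ norm by Hölder's inequality, splitting the factors between $L^{12}$ (for the quartic terms $|u_i|^4$, which land in $L^3$) and $L^6$ (for the difference $w$, which is controlled by $\|\nabla w\|_{L^2}$ via the Sobolev embedding $H^1\subset L^6$ on the ball). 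This gives
\begin{equation}
\|f(u_1(s))-f(u_2(s))\|_{L^2(B^{R-s}_{x_0})}\le
C\bigl(1+\|u_1(s)\|_{L^{12}(B^{R-s}_{x_0})}^4+\|u_2(s)\|_{L^{12}(B^{R-s}_{x_0})}^4\bigr)
\|\xi_w(s)\|_{\E(B^{R-s}_{x_0})}.
\end{equation}

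The main obstacle is that the coefficient multiplying $\|\xi_w(s)\|_{\E}$ is time-dependent and only in $L^1$ in time (it involves the fourth power of the $L^{12}$ norm, whose time integral is exactly the finite Strichartz norm of $u_1,u_2$ from \eqref{4.str}), so one cannot apply the classical Gronwall lemma with a bounded kernel. Instead I would denote $\Phi(s):=1+\|u_1(s)\|_{L^{12}(B^{R-s}_{x_0})}^4+\|u_2(s)\|_{L^{12}(B^{R-s}_{x_0})}^4$, observe that $\int_0^R\Phi(s)\,ds<\infty$ because of the assumed Strichartz regularity, and apply the integral (Gronwall) inequality with the $L^1$ kernel $\Phi$ to the quantity $y(t):=\|\xi_w(t)\|_{\E(B^{R-t}_{x_0})}$. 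This produces the multiplicative factor $\exp\bigl(C\int_0^t\Phi(s)\,ds\bigr)$, which is finite and depends precisely on $R$, $x_0$ and the Strichartz norms of $u_1,u_2$ as claimed, yielding \eqref{4.cone}. Finally, uniqueness follows by taking $g_1=g_2$ and $\xi_{u_1}(0)=\xi_{u_2}(0)$: the right-hand side of \eqref{4.cone} vanishes, forcing $\xi_{u_1}\equiv\xi_{u_2}$ on every cone, hence everywhere. One technical point to handle carefully is that the radius of the ball $B^{R-s}_{x_0}$ shrinks with $s$, so the Sobolev constant must be taken uniformly over the nested balls; this is harmless since all balls are contained in $B^R_{x_0}$ and the embedding constant can be bounded by that of the largest ball.
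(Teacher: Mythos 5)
Your proposal is correct and follows essentially the same route as the paper's proof: both treat $f(u_1)-f(u_2)$ as an external force in the linear equation, apply the linear finite-speed estimate \eqref{3.cone} on the cone, bound the nonlinear difference via the quintic growth \eqref{4.f}, H\"older with the $L^{12}$--$L^6$ splitting and $H^1\subset L^6$, and close with a Gronwall inequality whose $L^1$ kernel is controlled by the Strichartz norms from \eqref{4.str}. Your remark on the uniformity of the Sobolev constant over the nested balls is a fine point the paper leaves implicit, but the argument is the same.
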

\begin{proof} Let $v(t)=u_1(t)-u_2(t)$. Then this function solves the equation
\begin{equation}\label{4.dif}
\Dt^2 v+\gamma\Dt v+(1-\Dx) v=g_1(t)-g_2(t)-[f(u_1(t))-f(u_2(t))].
\end{equation}
Equation \eqref{4.dif} has the form of \eqref{lin.eq} with the right-hand side belonging to
 $L^1_{loc}(\R_+,L^2_{loc}(\R^3))$, therefore, estimate \eqref{3.cone} is applicable and gives
 \begin{multline}\label{4.conen}
\|\xi_v(t)\|_{\E(B^{R-t}_{x_0})}\le C\|\xi_v(0)\|_{\E(B^R_{x_0})}+
C\int_0^t\|g_1(s)-g_2(s)\|_{L^2(B^{R-s}_{x_0})}\,ds+\\+C\int_0^t\|f(u_1(s))-f(u_2(s))\|_{L^2(B^{R-s}_{x_0})}\,ds.
 \end{multline}
 Using assumptions \eqref{4.f} together with H\"older inequality and Sobolev
  embedding $H^1\subset L^6$, we get
 \begin{multline}
\|f(u_1(s))-f(u_2(s))\|_{L^2(B^{R-s}_{x_0})}\le\\\le
C\|(1+|u_1(s)|^4+|u_2(s)|^4)v(s)\|_{L^2(B^{R-s}_{x_0})}\le\\\le
 C(1+\|u_1(s)\|^{4}_{L^{12}(B^{R-s}_{x_0})}+\|u_2(s)\|^4_{L^{12}(B^{R-s}_{x_0})})
 \|v\|_{H^1(B^{R-s}_{x_0})}\le\\\le l(s)\|\xi_v(s)\|_{\E(B^{R-s}_{x_0})},
 \end{multline}
 where
 $$
 l(s):=C(1+\|u_1(s)\|^{4}_{L^{12}(B^{R-s}_{x_0})}+\|u_2(s)\|^4_{L^{12}(B^{R-s}_{x_0})})\in L^1(0,R),
$$
due to the extra regularity assumption \eqref{4.str}. Inserting the obtained estimate
 into the right-hand side of \eqref{4.conen} and applying the Gronwall inequality, we end up
  with the desired estimate \eqref{4.cone} and finish the proof of the proposition.
\end{proof}
As in the linear case, estimate \eqref{4.cone} allows to reduce the study of a general infinite energy
 case to the case of finite-energy solutions where the global well-posedness is known. Namely, we need
  the following result for the finite-energy case which is proved in \cite{SZ-UMN}.

\begin{prop}\label{Prop4.fin} Let the function $f$ satisfy \eqref{4.f},
$\xi_0\in \E$ and the external force $g\in L^1_{loc}(\R_+,L^2(\R^3))$.
Then problem \eqref{eq.main} possesses a unique global SS solution $u(t)$ and the following estimate holds:
\begin{equation}\label{4.finen}
\|\xi_u(t)\|_{\E}+\|v\|_{L^4(t,t+1;L^{12})}\le Q(\|\xi_0\|_\E)e^{-\alpha t}+Q(\|g\|_{L^1_b(0,t+1;L^2)}),
\end{equation}
where the positive constant $\alpha$ and monotone increasing function $Q$
 are independent of $\xi_0$, $t\ge0$ and $g$.
\end{prop}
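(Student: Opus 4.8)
The statement is attributed to \cite{SZ-UMN}, so what follows is a sketch of how the argument runs in the finite-energy setting. The plan is to combine the linear Strichartz machinery of Corollary \ref{th.lin.damp} with the energy-to-Strichartz estimate \eqref{0.es} available in the quintic case, and then to promote the resulting control into the dissipative estimate \eqref{4.finen} via the standard energy identity. First I would establish local existence and uniqueness: treating the quintic term as a forcing, one writes equation \eqref{eq.main} as a linear damped wave equation with right-hand side $g-f(u)$ and solves the fixed-point problem in the Banach space $C([0,T];\E)\cap L^4(0,T;L^{12})$ for $T$ small. The contraction follows from the linear estimate \eqref{3.str} together with the difference estimate for $f(u_1)-f(u_2)$ used already in Proposition \ref{Prop4.cone}; uniqueness is exactly Proposition \ref{Prop4.cone} restricted to a single ball (or its global version).

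The genuinely hard part is global existence together with the \emph{uniform} control of the Strichartz norm, since in the critical quintic case \eqref{0.es} does not follow from the linear estimate by perturbation. Here I would invoke the non-concentration/Pohozaev--Morawetz technology for the model equation \eqref{0.qmod}, in the form of the energy-to-Strichartz estimate proved in \cite{BG1999} (cf. \eqref{0.es}), which bounds $\|u\|_{L^4(t,t+1;L^{12})}$ by a function of the energy norm $\|\xi_u(t)\|_{\E}$ and of $\|g\|_{L^1(t,t+1;L^2)}$ alone. The damping and the lower-order term $h(u)$ (which is subquintic by \eqref{4.f}) are absorbed as perturbations via \eqref{3.5int}. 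Granting this estimate, finite-time blow-up of the Strichartz norm is excluded, which upgrades the local solution to a global one.

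Once the Strichartz norm is controlled by the energy, I would close the argument with the energy identity \eqref{0.energy}. Multiplying \eqref{eq.main} by $\Dt u$ gives
\begin{equation}
\frac{d}{dt}E(\xi_u(t))=-\gamma\|\Dt u\|^2_{L^2}+(g,\Dt u),
\end{equation}
and, exactly as in the cubic case, a dissipative inequality of the form
\begin{equation}
\frac{d}{dt}E(\xi_u(t))+\beta E(\xi_u(t))\le C\|g\|^2_{L^2}
\end{equation}
is obtained by adding a small multiple of $(u,\Dt u)$ to the energy and using the sign condition implicit in \eqref{4.f} together with Young's inequality. Integrating yields the exponential decay of $\|\xi_u(t)\|_{\E}$ in \eqref{4.finen}, and feeding this bound back into the energy-to-Strichartz estimate \eqref{0.es} on each unit interval $(t,t+1)$ produces the accompanying control of $\|u\|_{L^4(t,t+1;L^{12})}$. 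I expect the energy-to-Strichartz step to be the sole real obstacle; the existence, uniqueness, and dissipation parts are routine perturbations of the linear theory already set up in Sections \ref{s.lin} and \ref{s4}.
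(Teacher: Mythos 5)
Your proposal matches the paper's treatment: the paper does not prove Proposition \ref{Prop4.fin} at all but imports it from \cite{SZ-UMN}, and the strategy you outline --- local well-posedness by a fixed point in $C([0,T];\E)\cap L^4(0,T;L^{12})$, globalization and uniform Strichartz control via the energy-to-Strichartz estimate for the model quintic equation \eqref{0.qmod} from \cite{BG1999} (correctly identified as the one genuinely hard step), and dissipativity via the energy identity with a perturbed energy functional --- is precisely the scheme of \cite{SZ-UMN} as summarized in the paper's introduction. The only slight inaccuracy is attributional: the uniform bound of \cite{BG1999} comes from the profile decomposition technique rather than the non-concentration/Pohozhaev--Morawetz argument (which, as the introduction stresses, yields global existence but no uniform control of the Strichartz norm), though this does not affect the substance of your argument.
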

Combining Propositions \ref{Prop4.fin} and \ref{Prop4.cone}, we get the following result.

\begin{theorem}\label{Th4.wp} Let $\xi_0\in\E_{loc}$ and $g\in L^1_{loc}(\R_+,L^2_{loc}(\R^3))$ and
 let the nonlinearity $f$ satisfy \eqref{4.f}. Then problem \eqref{eq.main} possess a unique
  globally defined SS-solution $u(t)$. Moreover, this solution satisfies the following estimate:
\begin{multline}\label{4.infen}
\|\xi_u(t)\|_{\E(B^R_{x_0})}+\|u(t)\|_{L^4(t,t+1;L^{12}(B^R_{x_0}))}\le\\\le
 Q(\|\xi_0\|_{\E(B^{R+t+1}_{x_0})})e^{-\alpha t}+ Q(\|g\|_{L^1_b(0,t+1,L^2(B^{R+t+1}_{x_0}))}),
\end{multline}
where the constant $\alpha>0$ and monotone increasing function $Q$ are independent
 of $R>0$, $x_0\in\R^3$, $\xi_0$, $g$ and $t>0$.
\end{theorem}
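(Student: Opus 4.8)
The plan is to derive Theorem~\ref{Th4.wp} by combining the global well-posedness in the finite-energy case (Proposition~\ref{Prop4.fin}) with the finite speed of propagation estimate (Proposition~\ref{Prop4.cone}), using the latter as a localization device. The existence and uniqueness of the SS-solution follow essentially from the cone structure already observed in the linear case: uniqueness is an immediate consequence of \eqref{4.cone}, since if two SS-solutions share the same initial data and external force, then the right-hand side of \eqref{4.cone} vanishes for every $R$ and $x_0$, forcing the solutions to coincide on every backward cone and hence everywhere. So the real content of the theorem is the \emph{quantitative} local estimate \eqref{4.infen}, and in particular the fact that the local energy and Strichartz norms of $u$ at time $t$ on the ball $B^R_{x_0}$ are controlled by the finite-energy norms of the data on the slightly enlarged ball $B^{R+t+1}_{x_0}$.

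The key idea for the estimate is \textbf{truncation of the initial data}. Fix $R>0$ and $x_0$. I would choose a cut-off function that equals $1$ on $B^{R+t+1}_{x_0}$ and vanishes outside, say, $B^{R+t+2}_{x_0}$, and let $\hat\xi_0$ be the truncated initial datum and $\hat g$ the correspondingly truncated (localized) external force. Because these truncated data now have genuinely \emph{finite} energy (they are supported in a bounded ball), Proposition~\ref{Prop4.fin} applies and produces a global finite-energy SS-solution $\hat u$ satisfying the dissipative estimate \eqref{4.finen} with the $\E$-norm and $L^4(L^{12})$-norm of the \emph{truncated} data on the right-hand side. The point of truncating on $B^{R+t+1}_{x_0}$ rather than on $B^R_{x_0}$ is precisely to accommodate the finite speed of propagation: the cone $C^{R+t+1}_{x_0}$ reaches down to the ball $B^{R}_{x_0}$ at time $t$ with room to spare, so inside this cone the truncation is invisible. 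By the finite speed of propagation applied to the \emph{difference} $u-\hat u$ (which solves the linear equation \eqref{4.dif} with vanishing right-hand side inside the relevant cone, since $f(u)-f(\hat u)$ is controlled exactly as in the proof of Proposition~\ref{Prop4.cone}), one concludes $u\equiv\hat u$ on the cone, and in particular $\xi_u(t)=\xi_{\hat u}(t)$ on $B^R_{x_0}$.

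With that identification in hand, the local estimate \eqref{4.infen} for $u$ is read off from the finite-energy estimate \eqref{4.finen} for $\hat u$: one simply notes that $\|\hat\xi_0\|_\E\le C\|\xi_0\|_{\E(B^{R+t+2}_{x_0})}$ and $\|\hat g\|_{L^1(0,t+1;L^2)}\le C\|g\|_{L^1(0,t+1;L^2(B^{R+t+2}_{x_0}))}$, the constants coming from the cut-off, and absorbs the harmless shift from $R+t+2$ to $R+t+1$ into the radius (the statement as written uses $R+t+1$, which is achieved by choosing the cut-off to be $1$ on a slightly larger ball from the start). The monotonicity of $Q$ and the exponential decay factor $e^{-\alpha t}$ transfer unchanged. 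The main subtlety I expect is \textbf{bookkeeping the cone geometry}: one has to verify that the cut-off is genuinely invisible on $C^{R+t+1}_{x_0}$, i.e.\ that at each intermediate time $s\in[0,t]$ the base of the relevant sub-cone $B^{R+t+1-s}_{x_0}$ stays inside the region where the data were left untouched, and that the nonlinear contribution $f(u)-f(\hat u)$ in \eqref{4.dif} really vanishes (rather than merely being small) on the cone. This requires the Gronwall argument of Proposition~\ref{Prop4.cone}, which is legitimate because the local Strichartz norms of both $u$ and $\hat u$ are finite by hypothesis and by \eqref{4.finen} respectively. Everything else—passing from balls to the enlarged balls, the equivalence of the various truncated norms, and the $t$-independence of the constants—is routine and follows the pattern already established for the linear equation.
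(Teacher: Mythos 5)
Your proposal is correct and follows essentially the same route as the paper: uniqueness via Proposition~\ref{Prop4.cone}, and existence plus the estimate \eqref{4.infen} by replacing the data with finite-energy data that agree with $\{\xi_0,g\}$ on the cone $C^{R+t+1}_{x_0}$, applying Proposition~\ref{Prop4.fin} to get $\hat u$ with the dissipative bound \eqref{4.finen}, and identifying $u\equiv\hat u$ inside the cone by the finite speed of propagation. The only cosmetic difference is that the paper uses the Sobolev extension operator from $\E(B^{R+t+1}_{x_0})$ rather than multiplication by a cut-off, which produces the radius $R+t+1$ exactly; your suggested fix of taking the cut-off equal to $1$ ``on a slightly larger ball'' does not shrink its support and so still yields $R+t+2$, but this off-by-one enlargement of the ball is harmless for every use of the theorem.
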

\begin{proof} Indeed, the uniqueness of a solution is verified in Proposition \ref{Prop4.cone}. To construct
the desired solution $u$ we utilize this proposition again. Namely, to get the value of $u$
in a cone $C^R_{0}$ for a given $R>0$, we construct the initial data $\tilde\xi_0\in\E$
  using the extension operator from $\E(B^{R}_0)$ to $\E$, take $\tilde g$ as zero extension of $g$ from
   the cone $C^{R}_0$ to the whole space and solve equation \eqref{eq.main} with the  data $\tilde\xi_0$ and
    $\tilde g$. Let $\tilde u$ be the corresponding finite energy SS solution which exists due to Proposition
     \ref{Prop4.fin}.
\par
Then, $u=\tilde u\big|_{C^{R}_0}$ is a desired SS solution of the initial
problem \eqref{eq.main} in the cone $C^{R}_0$. Moreover, due to Proposition \ref{Prop4.cone},
 this definition is independent of the choice of $R$  (the solutions defined using
  different cones will coincide on a smaller cone). Therefore, increasing $R$, we get the
   required global SS-solution $u(t,x)$ of problem \eqref{eq.main}. Thus, the existence of a
    solution is also verified. Estimate \eqref{4.infen} is also an immediate corollary of
     \eqref{4.finen}, \eqref{4.cone} and the cut-off procedure described above and the theorem is proved.
\end{proof}

We turn now to study the dissipativity of equation \eqref{eq.main}. We first note that even in the
 linear case this problem is {\it not dissipative} if we consider the initial data with sufficiently rapid
  growth rate as $|x|\to\infty$ (this can be easily seen using the explicit formula for
   solutions in the linear case), so at least some restrictions on this growth rate should be posed in
    order to avoid growing in time solutions. Following the standard approach (see \cite{MirZel} and references therein for more details),
    we will consider problem \eqref{eq.main} in the properly chosen uniformly local spaces. Namely,
    we assume from now on that
    \begin{equation}\label{4.ul}
    \xi_0\in\E_b:=H^1_b(\R^3)\times L^2_b(\R^3),\ \ g\in L^1_b(\R_+,L^2_b(\R^3))
    \end{equation}
 and study problem \eqref{eq.main} in the uniformly local energy phase space $\E_b$. The following theorem
  can be considered as the main result of this section.

\begin{theorem}\label{Th4.dis}
	Let the assumptions of Theorem \ref{Th4.wp} holds and let, in addition, \eqref{4.ul} be satisfied.
 Then the SS-solution $u(t)$ of problem \eqref{eq.main} belongs to $\E_b$ for all $t\ge0$
  and satisfies the following estimate:
	\begin{equation}\label{4.dis}
	\|\xi_u(t)\|^2_{\E_b}+\|u\|_{L^4(t,t+1;L^{12}_b)}\le
 Q(\|\xi_0\|_{\E_b})e^{-\beta t}+Q(\|g\|_{L^1_b(\R_+,L^2_b)}),
	\end{equation}
	for some constant $\beta>0$ and monotone nondecreasing function $Q$ which are independent of $u$ and $t\ge0$.
\end{theorem}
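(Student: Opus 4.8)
The plan is to split the proof into two essentially independent parts: the dissipative control of the energy norm $\|\xi_u(t)\|_{\E_b}$, which is by now standard and holds even for supercritical $f$, and the dissipative control of the Strichartz norm, which is the genuinely new point and which I extract from the finite-energy estimate of Proposition~\ref{Prop4.fin} by means of the finite speed of propagation. Once both are in hand, a short elementary argument puts the result in the form \eqref{4.dis}.

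\textbf{Step 1 (weighted energy dissipativity).} Fix $x_0\in\R^3$ and a small $\eb>0$ and test equation \eqref{eq.main} with $\phi_{\eb,x_0}^2\Dt u$. This produces a weighted analogue of the energy identity \eqref{0.energy} for the functional $E_{\eb,x_0}(\xi_u):=\frac12\int_{\R^3}\phi_{\eb,x_0}^2\big(|\Dt u|^2+|\Nx u|^2+|u|^2\big)\,dx+\int_{\R^3}\phi_{\eb,x_0}^2F(u)\,dx$, in which the only new terms come from $\Nx\phi_{\eb,x_0}$ and are, by property \eqref{2.dw}, bounded by $C\eb$ times the energy. Adding to $E_{\eb,x_0}$ a small multiple of $\int_{\R^3}\phi_{\eb,x_0}^2u\,\Dt u\,dx$ and using the structural assumptions \eqref{4.f} — which give $F(u)\ge\tfrac1{12}u^6-C$ and $uf(u)\ge\tfrac12u^6-C$ — one obtains a differential inequality $\frac{d}{dt}\mathcal E+\beta\mathcal E\le C\eb\|\xi_u\|_{\E_{\phi_{\eb,x_0}}}^2+C(\phi_{\eb,x_0}^2g,\Dt u)+C(\phi_{\eb,x_0}^2g,u)+C\|\phi_{\eb,x_0}\|_{L^2}^2$ for a functional $\mathcal E\sim\|\xi_u\|_{\E_{\phi_{\eb,x_0}}}^2$. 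For $\eb$ small the $O(\eb)$ errors are absorbed into $\beta\mathcal E$, Gronwall's inequality yields exponential decay of $\|\xi_u(t)\|_{\E_{\phi_{\eb,x_0}}}^2$ up to the forcing and the ($x_0$-independent) constant $\|\phi_{\eb,x_0}\|_{L^2}^2$, and Proposition~\ref{Prop2.eq} (multiplying by $\phi(x_0)$ and taking the supremum over $x_0$) converts this into $\|\xi_u(t)\|_{\E_b}^2\le Q(\|\xi_0\|_{\E_b})e^{-\beta t}+Q(\|g\|_{L^1_b(\R_+,L^2_b)})$. This is exactly the known energy estimate of \cite{MS,Zhyp}.

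\textbf{Step 2 (Strichartz control via finite speed of propagation).} Here I deliberately avoid the weighted linear Strichartz estimate \eqref{3.al-weight}, since feeding $f(u)\sim u^5$ into its right-hand side closes only superlinearly (this is precisely the obstruction described in the introduction). Instead I localize the finite-energy estimate. Fix $t\ge0$ and $x_0$, extend $\xi_u(t)\big|_{B^2_{x_0}}$ to some $\tilde\xi\in\E$ with $\|\tilde\xi\|_{\E}\le C\|\xi_u(t)\|_{\E(B^2_{x_0})}\le C\|\xi_u(t)\|_{\E_b}$, extend $g$ by zero outside the cone to $\tilde g$ with $\|\tilde g\|_{L^1_b(t,t+1;L^2)}\le C\|g\|_{L^1_b(\R_+,L^2_b)}$, and let $\tilde u$ be the finite-energy SS-solution of \eqref{eq.main} on $[t,t+1]$ with data $\tilde\xi$ at time $t$ and force $\tilde g$ furnished by Proposition~\ref{Prop4.fin}. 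By the finite speed of propagation (Proposition~\ref{Prop4.cone}, cf. \eqref{4.cone}) we have $\tilde u=u$ on the cone over $B^2_{x_0}$, which contains $[t,t+1]\times B^1_{x_0}$; applying \eqref{4.finen} on the single interval $[t,t+1]$ (so that the decay factor equals $1$) gives $\|u\|_{L^4(t,t+1;L^{12}(B^1_{x_0}))}\le\|\tilde u\|_{L^4(t,t+1;L^{12})}\le Q(\|\xi_u(t)\|_{\E_b})+Q(\|g\|_{L^1_b(\R_+,L^2_b)})$. Taking the supremum over $x_0$ yields $\|u\|_{L^4(t,t+1;L^{12}_b)}\le Q(\|\xi_u(t)\|_{\E_b})+Q(\|g\|_{L^1_b(\R_+,L^2_b)})$.

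\textbf{Step 3 (combination).} Substituting the energy bound of Step 1 into the right-hand side of Step 2 and using the monotonicity of $Q$ together with the elementary observation that a bound of the form $Q(ae^{-\beta t})$ can always be dominated by $Q_1(a)e^{-\beta't}+\mathrm{const}$ for some $0<\beta'\le\beta$, one arrives at the stated estimate \eqref{4.dis}. The main obstacle is Step 2: the whole difficulty of the quintic case is that the Strichartz norm cannot be controlled by the energy through the linear estimate alone, and the point of the argument is that this control is already available in the finite-energy setting (Proposition~\ref{Prop4.fin}) and can be transported to the uniformly local setting, uniformly in both $x_0$ and $t$, by the finite speed of propagation — the time-uniformity being supplied by the energy dissipativity of Step 1, which plays the role of bounded ``initial data'' at each time $t$.
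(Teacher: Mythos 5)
Your proposal is correct and follows essentially the same route as the paper: the paper likewise reduces \eqref{4.dis} to the energy-to-Strichartz estimate \eqref{4.es} (which, in the paper, is read off from \eqref{4.infen} — itself proved by exactly your cone/extension argument combining Proposition~\ref{Prop4.fin} with finite speed of propagation) together with the uniformly local weighted energy estimate obtained by multiplying \eqref{eq.main} by $\phi_{\eb,x_0}^2(\Dt u+\delta u)$ and taking the supremum over $x_0$, which is your Step 1 verbatim. Your only deviation is that you re-derive \eqref{4.es} from scratch instead of quoting \eqref{4.infen}, and you spell out the elementary absorption $Q(ae^{-\beta t}+b)\le Q_1(a)e^{-\beta' t}+Q_2(b)$ that the paper leaves implicit.
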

\begin{proof} Estimate \eqref{4.dis} can be deduced also from the basic estimate \eqref{4.infen}, but then we need
 the explicit form of the function $Q$ there, so we prefer to argue in a slightly different way. Namely,
 we first note that the regularity $\xi_u(t)\in\E_b$ follows immediately from \eqref{4.infen}. Moreover, the following
  energy-to-Strichartz estimate is also guaranteed by this estimate:
  \begin{equation}\label{4.es}
\|u\|_{L^4(t,t+1;L^{12}_b)}\le Q(\|\xi_u(t)\|_{\E_b})+Q(\|g\|_{L^1(t,t+1;L^2_b)}).
  \end{equation}
  Thus, it is enough to verify the dissipative estimate \eqref{4.dis} for
   the energy norm $\|\xi_u(t)\|_{\E_b}$ only. This can be done in a standard way using the
    weighted energy estimates. Namely, we need to multiply equation \eqref{eq.main} by $\phi_{\eb,x_0}^2(\Dt u+\delta u)$
    for some small positive $\delta$, get the weighted analogue of the standard energy estimate and
     finally take a supremum over $x_0\in\R^3$ to derive the desired uniformly local energy estimate
     \begin{equation}\label{4.en-u}
\|\xi_u(t)\|^2_{\E_b}\le Q(\|\xi_u(0)\|_{\E_b})e^{-\alpha t}+Q(\|g\|_{L^1_b(\R_+,L^2_b)}),
     \end{equation}
see \cite{MS,Zhyp} for the details. Since these arguments will be repeated in more details in the next
 section, we omit these details here. Combining \eqref{4.en-u} and \eqref{4.es} we get the desired
  dissipative estimate and finish the proof of the theorem.
\end{proof}

\section{Asymptotic smoothing property}\label{s5}

In this section we verify that any SS-solution of our problem \eqref{eq.main} can be split to
 exponentially decaying and more regular  parts. For simplicity, we will consider only
  the case of autonomous equation, so we assume from now on that
  \begin{equation}\label{5.g}
g\in L^2_b(\R^3).
  \end{equation}
The general case, say $g\in L^1_b(\R_+,H^1_b(\R^3))$ can be treated analogously, but we need not
this since the study of non-autonomous attractors is out of scope of this paper.
\par
Following \cite{BV,SZ-UMN,ZCPAA2004}, we split the solution $u(t)$ of problem \eqref{eq.main} as follows:
\begin{equation}\label{5.vw}
u(t)=v(t)+w(t).
\end{equation}
The decaying component $v(t)$ is chosen to satisfy the following equation:
\begin{equation}\label{5.eq.v}
\Dt^2 v+\gamma\Dt v+(1-\Dx)v+Lv+f(v)=0,\ \ \xi_v\big|_{t=0}=\xi_u(0),
\end{equation}
where $L$ is a sufficiently big positive number which will be fixed below. Finally, the
 smooth reminder $w(t)$ solves the equation
\begin{equation}\label{5.eq.w}
\Dt^2 w+\gamma\Dt w+(1-\Dx)w+f(u)-f(v)=Lv+g,\ \xi_{w}\big|_{t=0}=0.
\end{equation}
We start with the $v$-component.
\begin{prop}\label{Lem5.v}Let the assumptions of Theorem \ref{Th4.dis} holds and let,
in addition, $\xi_u(0)\in\E_b$ and $g$ enjoys \eqref{5.g}. Then, for sufficiently large
 $L=L(f)$, the SS-solution $v(t)$ of
 problem \eqref{5.eq.v} satisfies the following estimate:
\begin{equation}\label{5.vdec}
\|\xi_{v}(t)\|_{\E_b}+\|v\|_{L^4(t,t+1;L^{12}_b)}\le Q(\|\xi_u(0)\|_{\E_b})e^{-\beta t},
\end{equation}
where the constant $\beta>0$ and non-decreasing function $Q$   are independent of $\xi_u(0)\in\E_b$,
 $t\ge0$ and $g\in L^2_b$.
\end{prop}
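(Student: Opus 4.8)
The plan is to establish the exponential decay estimate \eqref{5.vdec} for the damped quintic wave equation \eqref{5.eq.v} by exploiting that the added term $Lv$ with large $L>0$ makes the effective potential strongly coercive, so that the natural energy functional decays exponentially. First I would set up the \emph{uniformly local weighted energy estimate}. Fix $x_0\in\R^3$ and a small $\eb>0$, multiply \eqref{5.eq.v} by $\phi_{\eb,x_0}^2(\Dt v+\delta v)$ for a small parameter $\delta>0$ to be chosen, and integrate over $\R^3$. The multiplier $\Dt v$ produces the dissipation $-\gamma\|\phi_{\eb,x_0}\Dt v\|_{L^2}^2$ together with the time-derivative of the weighted energy; the multiplier $\delta v$ is the standard device that converts the energy identity into a differential inequality with a genuine exponential rate (it produces a term controlling $\|\Nx v\|^2+\|v\|^2$ from below). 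The weight derivatives $\Nx\phi_{\eb,x_0}$ generate commutator terms, but by property \eqref{2.dw} these are bounded by $C\eb$ times lower-order weighted norms, hence absorbable for $\eb$ small, exactly as in the linear perturbation argument of Corollary \ref{Cor2.reg}.

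The crucial structural point is how to handle the nonlinearity $f(v)=v^5+h(v)$. For the leading quintic term one uses the sign condition implicit in \eqref{4.f}: since $F(v)=\tfrac16 v^6+\int_0^v h$ and the quintic part is defocusing, the term $(\phi_{\eb,x_0}^2\, v^5,\Dt v)$ integrates to a time derivative of $(\phi_{\eb,x_0}^2, \tfrac16 v^6)$, which enters the energy with a good sign, while $(\phi_{\eb,x_0}^2\, v^5,\delta v)=\delta(\phi_{\eb,x_0}^2,v^6)\ge 0$ contributes positively to the dissipation. The subquintic perturbation $h$, with $|h''(u)|\le C(1+|u|^q)$ and $q<3$, is genuinely lower order: any negative contribution it makes to the potential is bounded, uniformly in $x_0$, by the energy times a constant, and \emph{this is where the large parameter $L$ is spent}. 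Choosing $L=L(f)$ large enough guarantees that $\tfrac12 v^2 + Lv\cdot(\text{its primitive contribution}) + F(v)$ is bounded below by a positive multiple of $\|\xi_v\|_{\E}^2$ on each ball, so the full weighted energy
\begin{equation*}
\mathcal{E}_{\eb,x_0}(v):=\tfrac12\big(\|\phi_{\eb,x_0}\Dt v\|_{L^2}^2+\|\phi_{\eb,x_0}\Nx v\|_{L^2}^2+(1+L)\|\phi_{\eb,x_0}v\|_{L^2}^2\big)+(\phi_{\eb,x_0}^2,F(v))
\end{equation*}
is equivalent to $\|\xi_v\|_{\E_{\phi_{\eb,x_0}}}^2$ up to controllable error and satisfies a differential inequality $\tfrac{d}{dt}\mathcal{E}_{\eb,x_0}+2\beta\,\mathcal{E}_{\eb,x_0}\le C\eb\,\mathcal{E}_{\eb,x_0}$. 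After shrinking $\eb$ and applying Gronwall, this yields exponential decay of $\|\xi_v(t)\|_{\E_{\phi_{\eb,x_0}}}$; taking the supremum over $x_0$ and invoking the equivalence \eqref{2.weightsup} gives the decay of $\|\xi_v(t)\|_{\E_b}$ in \eqref{5.vdec}.

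Finally, the Strichartz part $\|v\|_{L^4(t,t+1;L^{12}_b)}$ is handled by the energy-to-Strichartz mechanism already in hand. Since $v$ is an SS-solution of an equation of the same type as \eqref{eq.main} (with $f$ replaced by $f(v)+Lv$, still of quintic type), Theorem \ref{Th4.dis}, or equivalently the energy-to-Strichartz estimate \eqref{4.es}, applies on each unit time window and bounds $\|v\|_{L^4(t,t+1;L^{12}_b)}$ by $Q(\|\xi_v(t)\|_{\E_b})$, which we have just shown decays like $e^{-\beta t}$; composing with the monotone $Q$ and adjusting $\beta$ downward if necessary gives the stated bound, with no dependence on $g$ since $g$ does not appear in \eqref{5.eq.v}. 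The main obstacle, and the only genuinely delicate point, is the \textbf{localization}: making sure that the weight-commutator errors from $\Nx\phi_{\eb,x_0}$ are uniformly (in $x_0$) dominated by $C\eb$ times the energy \emph{and} that the large-$L$ coercivity beats the subquintic defect $h$ uniformly over all balls simultaneously, so that a single $\beta>0$ works for the supremum over $x_0$; the quintic term itself is cooperative thanks to its sign, so the difficulty is entirely in controlling $h$ and the weight together rather than in the top-order nonlinearity.
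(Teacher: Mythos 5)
Your weighted energy argument coincides with the paper's: the same multiplier $\phi_{\eb,x_0}^2(\Dt v+\kappa v)$, the same use of \eqref{2.dw} to absorb the weight-commutator terms for small $\eb$, the same role for $L$ (the paper encodes your ``$L$ is spent on $h$'' via the inequalities $-Kv^2\le F(v)\le f(v)v+Kv^2$ and the choice $L\ge 2K$, which make all nonlinear contributions to the energy and to the dissipation functional non-negative), followed by Gronwall and the supremum over $x_0$. Up to the decay of the energy norm $\|\xi_v(t)\|_{\E_b}$ your proposal is essentially the paper's proof.

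However, there is a genuine gap in your treatment of the Strichartz part. You bound $\|v\|_{L^4(t,t+1;L^{12}_b)}\le Q(\|\xi_v(t)\|_{\E_b})$ via the energy-to-Strichartz estimate \eqref{4.es} and then claim that ``composing with the monotone $Q$ and adjusting $\beta$ downward'' yields exponential decay. This does not follow: $Q$ is only known to be monotone non-decreasing, so $Q(Ce^{-\beta t})\to Q(0)$ as $t\to\infty$, and unless $Q(0)=0$ the right-hand side does not decay at all, let alone exponentially. The paper explicitly flags exactly this obstruction --- whether $Q(0)=0$ is not known without revisiting the proofs of the energy-to-Strichartz estimate in \cite{SZ-UMN} and \cite{BG1999} --- and therefore inserts a second step that your proposal is missing: apply the \emph{linear} Strichartz estimate to \eqref{5.eq.v}, treating $f_L(v)=f(v)+Lv$ as a perturbation, which gives
\begin{equation*}
\|v\|_{L^4(t,t+1;L^{12}_b)}\le C\bigl(\|\xi_v(t)\|_{\E_b}+\|f_L(v)\|_{L^1(t,t+1;L^2_b)}\bigr),
\end{equation*}
and then estimate, using \eqref{3.5int} and the growth assumptions \eqref{4.f},
\begin{equation*}
\|f_L(v)\|_{L^1(t,t+1;L^2_b)}\le C\,\|\xi_v\|_{L^\infty(t,t+1;\E_b)}\bigl(1+\|v\|^4_{L^4(t,t+1;L^{12}_b)}\bigr).
\end{equation*}
Here the factor $\bigl(1+\|v\|^4_{L^4(t,t+1;L^{12}_b)}\bigr)$ is merely \emph{bounded} (this is what \eqref{4.es} legitimately provides), while the exponential decay is supplied entirely by the energy factor $\|\xi_v\|_{L^\infty(t,t+1;\E_b)}$, which you have already established. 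You should replace your one-line composition argument by this two-step scheme; without it the Strichartz half of \eqref{5.vdec} is not proved.
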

\begin{proof} Let us fix sufficiently small $\eb>0$ and $x_0\in\R^3$. Then, multiplying
 equation \eqref{5.eq.v} by $\phi^2_{\eb,x_0}\Dt v+\kappa \phi^2_{\eb,x_0}v$
 (with small $\kappa>0$ which will
 be fixed later) and integrating over $\R^3$, we find
\begin{equation}\label{5.v-enw}
	\frac{d}{dt}E_v(t)+\kappa E_v(t)+P_v(t)=0,
\end{equation}
where
\begin{multline}
E_v(t):=\frac12\|\xi_v(t)\|^2_{\E_{\phi_{\eb,x_0}}}+
\(\phi^2_{\eb,x_0},F(v(t))+\frac L2v^2(t)\)+\\+
\k\(\phi_{\eb,x_0}^2v,\Dt v\)+
\frac{\k\gamma}{2}\|v(t)\|^2_{L^2_{\phi_{\eb,x_0}}}
\end{multline}		
and
\begin{multline}
P_v(t):=\frac12\((2\gamma-3\k)\|\Dt v(t)\|^2_{L^2_{\phi_{\eb,x_0}}}+
		\k\|\nabla v(t)\|^2_{L^2_{\phi_{\eb,x_0}}}\)+\\+
\frac12\kappa(1-\k\gamma)\|v\|^2_{L^2_{\phi_{\eb,x_0}}}-\k^2(\phi_{\eb,x_0} v,\phi_{\eb,x_0}\Dt v(t))_{L^2}+\\+
\kappa\(\phi^2_{\eb,x_0},f(v(t))v(t)+\frac L2v^2(t)-F(v(t))\)+\\+
2(\phi_{\eb,x_0}\nabla v(t),\nabla \phi_{\eb,x_0}\Dt v(t))+
2\k(\phi_{\eb,x_0}\nabla v(t),\nabla \phi_{\eb,x_0}v(t)).
\end{multline}
Note that our assumptions \eqref{4.f} on the nonlinearity $f$ give the following inequalities:
$$
-Kv^2 \le F(v)\le f(v)v+Kv^2
$$
for some positive $K$. By this reason, all terms in the definitions of $E_v(t)$ and $P_v(t)$ which contain
 the nonlinearity will be non-negative if we take $L\ge 2K$.
\par
Fixing now $\kappa>0$ and $\eb>0$ small enough and using   inequality \eqref{2.dw}, we conclude that
 $$
 P_v(t)\ge0
 $$
 and
$$
 \frac14\|\xi_v(t)\|_{\E_{\phi_{\eb,x_0}}}^2\le
  E_v(t)\le C(\|\xi_v(t)\|_{\phi_{\eb,x_0}}^2+(\phi_{\eb,x_0}^2,F(v)),
$$
we deduce from \eqref{5.v-enw} that
$$
\frac d{dt} E_v(t)+\kappa E_v(t)\le0.
$$
The Gronwall inequality together with the fact that $f(v)$ has a quintic
 growth rate and the embedding $H^1\subset L^6$
now give
$$
\|\xi_v(t)\|_{\E_{\phi_{\eb,x_0}}}^2\le C\(\|\xi_v(0)\|^2_{\E_{\phi_{\eb,x_0}}}+
C\|\xi_v(t)\|^6_{\E_{\phi_{\eb/3,x_0}}}\)e^{-\beta t},
$$
for some positive constants $C$ and $\beta$ which are independent of $x_0$. Taking the
 supremum over $x_0\in\R^3$, we arrive at the desired dissipative estimate \eqref{5.vdec} for the
  energy norm $\|\xi_v(t)\|^2_{\E_b}$. So, it only remains to obtain its analogue for the
   Strichartz norm. We will do it in two steps.
   \par
   {\it Step 1.} We apply energy-to-Strichartz estimate \eqref{4.es} (where $g=0$ and $f$ is
    replaced by $f_L(v)=f(v)+Lv$) to get
    \begin{equation}\label{5.bad}
\|v\|_{L^4(t,t+1;L^{12}_b)}\le Q(\|\xi_v(t)\|_{\E_b})\le Q(\|\xi_v(0)\|_{\E_b}).
    \end{equation}
Here we are unable to get the decaying estimate since we do not know that $Q(0)=0$
 (it is likely so, but to check this we need to revise the proof given in \cite{SZ-UMN} as
  well as the proof of energy-to-Strichartz estimates given in \cite{BG1999} or \cite{tao1}
     which we prefer not to do). So we need one more step.
   \par
   {\it Step 2.} We apply {\it linear} Strichartz estimate to  equation \eqref{5.eq.v}
   treating the term $f_L(v)$ as a perturbation to get:
\begin{equation}
\|v\|_{L^4(t,t+1;L^{12}_b)}\le C(\|\xi_v(t)\|_{\E_b}+C\|f_L(v)\|_{L^1(t,t+1;L^2_b)})
\end{equation}
Using estimate \eqref{3.5int} and our assumptions \eqref{4.f} on the non-linearity, we deduce
\begin{equation}
\|f_L(v)\|_{L^1(t,t+1;L^2_b)}\le C\|\xi_v(t)\|_{L^\infty(t,t+1;\E_b)}\(1+\|v\|^4_{L^4_b(t,t+1;L^{12}_b)}\).
\end{equation}
Combining these estimates with the already proved decaying estimate for the energy norm, we get
 the desired estimate \eqref{5.vdec} and finish the proof of the proposition.	
\end{proof}
We now turn to the most complicated $w$-component. Note first of all that estimates \eqref{4.dis}
 and \eqref{5.vdec} give
 \begin{equation}\label{5.wen}
\|\xi_w(t)\|_{\E_b}+\|w\|_{L^4(t,t+1;L^{12}_b)}\le
 Q(\|\xi_u(0)\|_{\E_b})e^{-\beta t}+Q(\|g\|_{L^2_b}),
 \end{equation}
 but we need an analogue of this estimate in $\E^\alpha_b$ for some $\alpha>0$. We will derive it in two steps.
 At the first step we derive the exponentially divergent analogue of this higher energy estimate which will be
  improved at the next step.

\begin{prop}\label{Prop5.wdiv}
	Let the above assumptions hold and let $\alpha\in(0,2/5]$. Then the $w$-component of the SS-solution
 $u$ of problem \eqref{eq.main} satisfies the following estimate:
\begin{equation}\label{5.wgr}
	\|\xi_w(t)\|_{\E^\alpha_b}+\|w\|_{L^4([t,t+1];H^{\alpha,12}_b)}\le
 e^{Kt}\(Q(\|\xi_u(0)\|_{\E_b})+Q(\|g\|_{L^2_b})\),
	\end{equation}
	for some monotone function  $Q$, which
 does not depend on $\xi_u(0)$ and $g$.	
\end{prop}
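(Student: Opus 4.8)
The plan is to treat equation \eqref{5.eq.w} as a \emph{linear} damped wave equation for $w$ with the right-hand side $G(t):=Lv(t)+g-(f(u(t))-f(v(t)))$, and to apply the weighted linear energy-to-Strichartz estimate \eqref{3.al-weight} with the given $\alpha\in(0,2/5]$ and zero initial data $\xi_w(0)=0$, localized by the weight $\phi_{\eb,x_0}$ for small $\eb>0$ and with the supremum over $x_0\in\R^3$ taken at the very end. This reduces the whole statement to estimating $\|G(t)\|_{H^\alpha_{\phi_{\eb,x_0}}}$, in which the only genuinely difficult term is the nonlinear difference $f(u)-f(v)$; the terms $Lv$ and $g$ I would dispose of first.

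Since $\alpha\le1$ and $H^1\subset H^\alpha$, the term $Lv$ obeys $\|Lv\|_{H^\alpha_b}\le C\|\xi_v\|_{\E_b}$, which by Proposition \ref{Lem5.v} (estimate \eqref{5.vdec}) decays exponentially and hence contributes only a bounded $Q(\|\xi_u(0)\|_{\E_b})$ after integration in time. The force $g$ is time-independent and merely in $L^2_b$, so it cannot be put into $H^\alpha_b$ directly; instead I exploit its elliptic regularization. Setting $z:=(1-\Dx)^{-1}g$, Corollary \ref{Cor2.reg} gives $z\in H^2_b$ with $\|z\|_{H^2_b}\le C\|g\|_{L^2_b}$, so writing $w=z+\hat w$ removes $g$ from the forcing at the price of the regular initial datum $(-z,0)\in\E^1_b\subset\E^\alpha_b$ and a constant, $H^2_b$-regular shift of the nonlinearity. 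Both contribute only the \emph{non-growing} term $Q(\|g\|_{L^2_b})$; in particular the fractional Strichartz norm $\|z\|_{H^{\alpha,12}_b}$ is finite since $H^{2-\alpha,2}\subset L^{12}$ in $\R^3$ for such $\alpha$.

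The heart of the proof, and the main obstacle, is the fractional estimate of $f(u)-f(v)$ in the localized space $H^\alpha_{\phi_{\eb,x_0}}$. Writing $f(u)-f(v)=\ell\,w$ with $\ell:=\int_0^1 f'(v+\theta w)\,d\theta$ and using \eqref{4.f} to get $|\ell|\le C(1+|v|^4+|w|^4)$, I would apply the localized (weighted) Kato–Ponce inequality of Proposition \ref{Prop2.KP} and distribute the $\alpha$ derivatives over the factors. The exponent bookkeeping, carried out in Appendix \ref{sB}, yields a bound of the schematic form
$$
\|f(u)-f(v)\|_{H^\alpha_b}\le C\big(1+\|u\|^4_{L^{12}_b}+\|v\|^4_{L^{12}_b}\big)\|\xi_w\|_{\E^\alpha_b}+C\|w\|_{L^6_b}\big(\|u\|^3_{L^{12}_b}+\|v\|^3_{L^{12}_b}\big)\|w\|_{H^{\alpha,12}_b},
$$
where I have used the borderline embedding $H^{1+\alpha,2}\subset H^{\alpha,6}$ (valid with equality in $\R^3$) to convert the energy-level fractional norm into the $\E^\alpha_b$ norm. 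It is exactly these borderline Sobolev embeddings at the critical quintic scaling that force the restriction $\alpha\le2/5$. The localization adds the technical difficulty that $(1-\Dx)^{\alpha/2}$ does not commute with the cut-offs $\phi_{\eb,x_0}$ and $\psi_{x_0}$; this is controlled by the commutator estimates of Proposition \ref{Prop2.com} and Corollaries \ref{Cor2.HT} and \ref{Cor2.Heq}.

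Finally I would close by Gronwall's inequality. Inserting the nonlinear bound into \eqref{3.al-weight} and taking the supremum over $x_0$, the contributions of $f(u)-f(v)$ are \emph{linear} in the target quantities $\|\xi_w\|_{\E^\alpha_b}$ and $\|w\|_{L^4(\cdot;H^{\alpha,12}_b)}$, with coefficients $\|u\|^4_{L^4(L^{12}_b)}$ and $\|w\|^3_{L^4(L^{12}_b)}\|\xi_w\|_{L^\infty(\E_b)}$ which are bounded but do not decay, by \eqref{5.wen} and Theorem \ref{Th4.dis}. Absorbing a small multiple of $\|w\|^4_{L^4(H^{\alpha,12}_b)}$ into the left-hand side via Young's inequality and iterating the resulting integral inequality over unit time windows, a standard Gronwall argument produces $\|\xi_w(t)\|_{\E^\alpha_b}+\|w\|_{L^4(t,t+1;H^{\alpha,12}_b)}\le e^{Kt}\big(Q(\|\xi_u(0)\|_{\E_b})+Q(\|g\|_{L^2_b})\big)$. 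The exponential-in-time factor is precisely the price of the non-decaying coefficients and will be removed at the next step by the finer splitting of $u$.
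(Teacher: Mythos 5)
Your overall architecture is the right one and matches the paper's in several places: treating \eqref{5.eq.w} as a linear equation with forcing $Lv+g-(f(u)-f(v))$, removing the merely-$L^2_b$ force $g$ by the elliptic shift (the paper does exactly this with $\theta_{x_0}=(1-\Dx)^{-1}(\psi_{R,x_0}g)$), and closing with a Gronwall argument that tolerates $e^{Kt}$ growth. However, there is a genuine gap at the heart of the proof: your schematic fractional bound for $f(u)-f(v)$ is not available. Writing $f(u)-f(v)=\ell w$ with $\ell=\int_0^1 f'(v+\theta w)\,d\theta$ and applying Kato--Ponce, the term in which the $\alpha$ derivatives fall on $\ell$ produces, via the fractional chain rule, the norms $\|u\|_{H^{\alpha,12}}$ and $\|v\|_{H^{\alpha,12}}$ --- fractional norms of $u$ and $v$ that nothing controls at this stage ($u$ and $v$ are only energy- and $L^4(L^{12})$-regular). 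Your second term, where the fractional Strichartz norm lands on $w$ with the coefficient $\|w\|_{L^6_b}\bigl(\|u\|^3_{L^{12}_b}+\|v\|^3_{L^{12}_b}\bigr)\in L^{4/3}$ in time, is therefore strictly stronger than anything Kato--Ponce plus the chain rule yields. What the paper actually has (Proposition \ref{Lem.v4wHa}, i.e.\ \eqref{a.est.main} and \eqref{A.mf}, imported from \cite{SZ-UMN}) is the multiplicative interpolation bound $m_{R,x_0}^{1-\alpha/4}\|w_{x_0}\|^{1-\alpha}_{H^{1+\alpha}}\|w_{x_0}\|^{\alpha}_{H^{\alpha,12}}$, in which the Strichartz factor carries only the power $\alpha$ and the fractional norms sit exclusively on $w$; this structure is essential because $m_{R,x_0}\sim 1+\|u\|^4_{L^{12}}+\|v\|^4_{L^{12}}$ is merely $L^1$ in time. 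With the correct estimate, your linear absorption step fails: Young's inequality would put the coefficient $m^{1-\alpha/4}\in L^{4/(4-\alpha)}$ in front of $\|w\|_{H^{\alpha,12}}$, and $4/(4-\alpha)<4/3$ for $\alpha\in(0,2/5]$, so the $L^{4/3}$-integrability you need is not there. One instead needs the paper's grouping \eqref{sm.est3''} with H\"older exponents $\tfrac{4}{4-\alpha}$ and $\tfrac{4}{\alpha}$, followed by the nonlinear-to-linear reduction \eqref{sm.est5'} exploiting $\|m_{R,x_0}\|_{L^1_b}<\infty$ --- precisely the exponent bookkeeping your sketch delegates to the appendix, but which the appendix does not deliver in your additive form.

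A second, structural gap concerns the localization. You localize by the exponential weights $\phi_{\eb,x_0}$ and the weighted linear estimate \eqref{3.al-weight}; the paper deliberately does not, because the composition estimate \eqref{A.mf} is only available for compactly supported cut-offs (a weighted Kato--Ponce is stated in Section \ref{s.sp} as an unproved remark, and what is really needed is Proposition \ref{Lem.v4wHa} in weighted form --- this is the ``essential new difficulty'' flagged in the introduction). The paper instead multiplies by $\psi_{R,x_0}$ and accepts the commutator terms $2\nabla\psi_{R,x_0}\nabla w+\Dx\psi_{R,x_0}\,w$ in \eqref{eq.wx0}; these produce in \eqref{sm.est3} the self-referential term $CR^{-1}\int_0^te^{-\beta(t-s)}\|\xi_w(s)\|_{\E^\alpha_{b,2R}}\,ds$ involving the unknown uniformly local norm itself, and closing the loop requires taking the supremum over $x_0$ and then a \emph{second} application of Gronwall as in \eqref{5.eb-div}. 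Your proposal has no counterpart of this layer, and invoking the commutator estimates of Proposition \ref{Prop2.com} is not sufficient: those control $[\phi_{\eb,x_0},(1-\Dx)^{s/2}]$ acting on a given function, not the weighted fractional norm of the quintic composition. As written, the proof rests on an unestablished weighted nonlinear estimate and on a too-strong linear-in-Strichartz bound; both must be replaced by the cut-off localization, the multiplicative estimate \eqref{A.mf}, and the double Gronwall scheme.
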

\begin{proof} Let the cut-off function $\psi(x)\in C^\infty_0(\R^3)$
 be the same as in Remark
 \ref{Rem2.cut} and let
 $$
 \psi_{R,x_0}(x):=\psi_0\(\frac{x-x_0}R\),
 $$
where the parameter $R\ge1$ will be specified below (for the proof of this proposition
 we may fix $R=1$, for what follows later we need $R\gg1$). Then, we have obvious estimates
 \begin{equation}\label{5.cut}
|\Nx\psi_{R,x_0}(x)|+|\Dx\psi_{R,x_0}(x)|\le CR^{-1},
 \end{equation}
 where $C$ is independent of $R$ and $x_0$.
Let us set
 $$
 v_{x_0}=\psi_{R,x_0}v, \ \ w_{x_0}=\psi_{R,x_0}w\ \ \text{and}\ \  u_{x_0}:=\psi_{R,x_0}u.
 $$
  Then $w_{x_0}$ solves
\begin{multline}\label{eq.wx0}
\Dt^2 w_{x_0}+\gamma\Dt w_{x_0}-\Dx w_{x_0}+w_{x_0}=-\psi_{R,x_0}(f(u)-f(v))-\\-
\Dx\psi_{R,x_0}w-2\nabla\psi_{R,x_0}\nabla w+ Lv_{x_0}+\psi_{R,x_0}g,\ \
\xi_{w_{x_0}}|_{t=0}=0.
\end{multline}
Also, without loss of generality, we may assume that $f'(0)=0$. Indeed, in general case we may just replace
$f(u)$ by $\tilde f(u):=f(u)-f'(0)u$ and the extra term $f'(0)(u-v)$ which will appear in the
 right-hand side of equation \eqref{5.eq.w} is under the control due to estimates \eqref{4.dis}
  and \eqref{5.wen}.
 \par
The right-hand side of this equation contains the term $\psi_{R,x_0}g$ which is only
 $L^2$ and not $H^\alpha$ and this prevents us to use the linear Strichartz
  estimate \eqref{lin.est} directly. To overcome this difficulty, we introduce the functions
   $$
   \theta_{x_0}:=(1-\Dx)^{-1}(\psi_{R,x_0}g) \ \ \text{and}\ \ \ \bar w_{x_0}:=w_{x_0}-\theta_{x_0}.
   $$
Then the last function solves
\begin{multline}\label{eq.wx01}
\Dt^2 \bar w_{x_0}+\gamma\Dt \bar w_{x_0}+(1-\Dx) \bar w_{x_0}=-\psi_{R,x_0}(f(u)-f(v))-\\-
\Dx\psi_{R,x_0}w-2\nabla\psi_{R,x_0}\nabla w+ Lv_{x_0},\ \
\xi_{w_{x_0}}\big|_{t=0}=\{-\theta_{x_0},0\}
\end{multline}
and we may apply \eqref{lin.est} to this equation instead. Note also that due to the
elliptic regularity and Sobolev embedding theorem,
$$
\|\theta_{x_0}\|_{H^{2}}+\|\theta_{x_0}\|_{H^{\alpha,12}}\le C\|g\|_{L^2_b}
$$
(here we have used that $\alpha\le2/5$),
 there is no difference in estimation the corresponding energy and Strichartz norms of $\bar w_{x_0}$ and $w_{x_0}$.
\par
 Applying linear Strichartz estimates \eqref{lin.est} to equation \eqref{eq.wx01}
and using \eqref{4.dis} and \eqref{5.vdec} together with \eqref{5.cut},  we find
\begin{multline}
\label{sm.est3}
\|\xi_{w_{x_0}}(t)\|_{\E^\alpha}+
\left(\int_0^t e^{-4\beta(t-s)}\|w_{x_0}(s)\|^4_{H^{\alpha,12}}\,ds\right)^\frac{1}{4}\le
\\\le
C\int_0^te^{-\beta(t-s)}\|\psi_{x_0}(f(u)-f(v))\|_{H^\alpha}\,ds+\\+
CR^{-1}\int_0^te^{-\beta(t-s)}\|\xi_w(s)\|_{\E^\alpha_{b,2R}}\,ds+Q_R(\|\xi_u(0)\|_{\E_b})e^{-\beta t}
+Q_R(\|g\|_{L^2_b}),
\end{multline}
where the constants $C$ and $\beta>0$ are independent of $R$ and $x_0$, the monotone function $Q_R$
 may depend on $R$ (but not on $x_0$) and
 $$
 \|\xi_v\|_{\E_{b,R}^\alpha}:=\sup_{x_0\in\R^3}\|\xi_v\|_{\E^\alpha(B^{R}_{x_0})}.
 $$
 The key problem is to estimate the integral in the right-hand side of  \eqref{sm.est3}
 which contains non-linearity $f$. To this end, we use estimate
 \eqref{A.mf}, see Appendix A, together with \eqref{4.dis} and \eqref{5.vdec} to derive
\begin{multline}\label{sm.est3'}
\|\psi_{x_0}(f(u)-f(v))\|_{H^\alpha(\R^3)}\le\\
 C\left(1+\|u\|_{L^{12}(B^{2R}_{x_0})}+\|v\|_{L^{12}(B^{2R}_{x_0})}\right)^{4-\alpha}
 \(1+\|u\|_{H^1(B^{2R}_{x_0})}+\|v\|_{H^1(B^{2R}_{x_0})}\)^\alpha\times\\
 \|w_{x_0}\|^{1-\alpha}_{H^{1+\alpha}(\R^3)}\|w_{x_0}\|^{\alpha}_{H^{\alpha,12}(\R^3)}
 \le m_{R,x_0}(t)^{1-\frac\alpha4}\|w_{x_0}\|^{1-\alpha}_{H^{1+\alpha}(\R^3)}\|w_{x_0}\|^{\alpha}_{H^{\alpha,12}(\R^3)},
\end{multline}
where
\begin{equation}\label{5.m}
m_{R,x_0}(t):=K_R\left(1+\|u(t)\|^4_{L^{12}(B^{2R}_{x_0})}+\|v(t)\|^4_{L^{12}(B^{2R}_{x_0})}\right)
\end{equation}
and the constant $K_R=K_R(\|\xi_u(0)\|_{\E_b},\|g\|_{L^2_b})$.
\par
Using now  Holder's inequality in time with
exponents $\tfrac{4}{4-\alpha}$ and $\tfrac{4}{\alpha}$ we have the chain of inequalities as follows
\begin{multline}\label{sm.est3''}
 \int_0^te^{-\beta(t-s)}\|\psi_{R,x_0}(f(u)-f(v))\|_{H^\alpha}\,ds\le\\\le
\left(\int_0^te^{-k_\alpha\beta(t-s)}m_{R,x_0}(s)
\|w_{x_0}\|^{k_\alpha}_{H^{1+\alpha}}\,ds\right)^{1-\alpha/4}\times\\
\left(\int_0^te^{-4\beta(t-s)}\|w_{x_0}\|^4_{H^{\alpha,12}}\,ds\right)^\frac{\alpha}{4}\le
C\left(\int_0^te^{k_\alpha\beta(t-s)}m_{R,x_0}\|w_{x_0}\|^{k_\alpha}_{H^{1+\alpha}}\,ds\right)^{\frac1{k_\alpha}}\\+
\frac{1}{4}\left(\int_0^te^{-4\beta(t-s)}\|w_{x_0}\|^4_{H^{\alpha,12}}\,ds\right)^\frac{1}{4},
\end{multline}
where $k_\alpha:=\frac{4-4\alpha}{4-\alpha}$ and at the last step we used Young's
inequality with exponents $\tfrac{1}{\alpha}$ and $\tfrac{1}{1-\alpha}$.
\par
The second integral will be cancelled with the left-hand side of \eqref{sm.est3} and for the first one
we continue the estimate using Young's inequality
 in time with exponents $\tfrac{1}{k_\alpha}$ and $\tfrac{1}{1-k_\alpha}$:
\begin{multline}\label{sm.est5'}
\left(\int_0^te^{-k_\alpha\beta(t-s)}m_{R,x_0}(s)
\|\xi_{w_{x_0}}(s)\|^{k_\alpha}_{\E^\alpha}\,ds\right)^\frac{1}{k_\alpha}=\\
\left(\int_0^te^{-k^2_\alpha\beta(t-s)}m^{k_\alpha}_{R,x_0}(s)
\|\xi_{w_{x_0}}(s)\|^{k_\alpha}_{\E^\alpha}e^{-k_\alpha(1-k_\alpha)\beta(t-s)}
m^{1-k_\alpha}_{R,x_0}(s)\,ds\right)^\frac{1}{k_\alpha}\leq\\
\left(\int_0^te^{-k_\alpha\beta(t-s)}m_{R,x_0}(s)
\|\xi_{w_{x_0}}(s)\|_{\E^\alpha}\,ds\right)\left(\int_0^t
e^{-k_\alpha\beta(t-s)}m_{R,x_0}(s)\,ds\right)^\frac{1-k_\alpha}{k_\alpha}\leq\\\le
K_R\int_0^te^{-k_\alpha\beta(t-s)}m_{R,x_0}(s)\|\xi_{w_{x_0}}(s)\|_{\E^\alpha}\,ds,
\end{multline}
for some constant $K_R$ depending on $R$, $\|\xi_u(0)\|_{\E_b}$ and $\|g\|_{L^2_b}$ (here we
have also implicitly used estimates \eqref{5.vdec} and \eqref{4.dis}).
Inserting these estimates into the right-hand side of \eqref{sm.est3}, we arrive at
\begin{multline}\label{sm.est5.2}
\|\xi_{w_{x_0}}(t)\|_{\E^\alpha}+\left(\int_0^t e^{-4\beta(t-s)}
\|w_{x_0}(s)\|^4_{H^{\alpha,12}}\,ds\right)^\frac{1}{4}\le\\\le
 K_R\int_0^te^{-k_\alpha\beta(t-s)}m_{R,x_0}(s)\|\xi_{w_{x_0}}(s)\|_{\E^\alpha}\,ds+\\+
 CR^{-1}\int_0^te^{-\beta(t-s)}\|\xi_w(s)\|_{\E^\alpha_{b,2R}}\,ds+Q_R(\|\xi_u(0)\|_{\E_b})e^{-\beta t}
+Q_R(\|g\|_{L^2_b}).
\end{multline}
To complete the proof we need the following version of the Gronwall lemma.

\begin{lemma}\label{Lem8.gr} Let the function $Y\in C_{loc}([\tau,\infty))$ satisfies
	$$
	Y(t)\le H(t)+\int_\tau^te^{-\beta_0(t-s)}(l(s)Y(s)+G(s))\,ds,\ \ \ t\ge\tau
	$$
	for some constant  $\beta_0$, some functions $H\in L^\infty_{loc}([\tau,\infty))$,
$G\in L^1_{loc}([\tau,\infty))$ and non-negative function $l(t)\ge0$ such that $l\in L^1_{loc}([\tau,\infty))$. Then, the following estimate holds:
\begin{equation}\label{8.gr1}
	Y(t)\le H(t)+\int_\tau^t e^{-\beta_0(t-s)+\int_s^tl(\kappa)\,d\kappa}(l(s)H(s)+G(s))\,ds
	\end{equation}
for all $t\ge\tau$.
\end{lemma}
The proof of this  lemma is standard and is left to the reader.
\par
Using estimates \eqref{4.dis} and \eqref{5.vdec}, we see that
\begin{equation}\label{5.mbig}
K_R\|m_{R,x_0}\|_{L^1_b}\le K,
\end{equation}
where the constant $K$ depends on $R$ and norms $\|\xi_u(0)\|_{\E_b}$ and $\|g\|_{L^2_b}$ (but is
 independent of $x_0$). Applying the Gronwall inequality \eqref{8.gr1} with
\begin{multline*}
 Y(t)=\|\xi_{w_{x_0}}(t)\|_{\E^\alpha},\  l(t)=K_Rm_{R,x_0}(t), \ \beta_0=k_\alpha \beta,\\
 G(t)=\|\xi_w(t)\|_{\E^\alpha_{b,2R}} \text{ and } H(t)=Q_R(\|\xi_u(0)\|_{\E_b})e^{-\beta t}
+Q_R(\|g\|_{L^2_b})
\end{multline*}
to \eqref{sm.est5.2} after
 the straightforward estimations, we arrive at
 \begin{equation}\label{est.w.div}
\|\xi_{w_{x_0}}(t)\|_{\E^\alpha}\le
CR^{-1}\int_0^te^{K(t-s)}\|\xi_w(s)\|_{\E^\alpha_{b,2R}}\,ds+Qe^{Kt},
 \end{equation}
 where the constants $Q$ and $K$ depend on $R$, $\|\xi_u(0)\|_{\E_b}$ and $\|g\|_{L^2_b}$.
 Note that in this estimate  the constant $C$ depends on $K$ since we have used an obvious estimate
 $$
\int_s^tl(\kappa)\,d\kappa\le K+K(t-s),
 $$
 where we cannot avoid the first term $K$ in the right-hand side. Thus, the constant $C$ depends
  on $R$ as well, but only through the constant $K$. In the sequel we modify this estimate
   in such a way that $K$ will be small and independent of $R$, then the constant $C$ will
    automatically be independent of $R$ as well (this observation is not important for the proof of the current
     proposition since we can just fix $R=1$ here, but will be crucial for what follows later).
\par
Taking the supremum with respect to $x_0\in\R^3$ from both parts and using the
  standard inequalities
  $$
  C_1\|\xi_w\|_{\E_{b,R}}\le\|\xi_w\|_{\E_{b,2R}}\le C_2\|\xi_w\|_{\E_{b,R}},
  $$
  where the constants $C_i$ are independent of $R$,
we conclude that
 \begin{multline}\label{5.eb-div}
\|\xi_{w}(t)\|_{\E^\alpha_{b,R}}\le
C'R^{-1}\int_0^te^{K(t-s)}\|\xi_w(s)\|_{\E^\alpha_{b,R}}\,ds+\\+Q_R(\|\xi_u(0)\|_{\E_b}+\|g\|_{L^2_b})e^{Kt},
 \end{multline}
Applying the Gronwall inequality again, we end up with the desired inequality \eqref{5.wgr}
 for the $\E^\alpha_b$-energy. To get the estimate for the Strichartz part it is now enough
 to use \eqref{sm.est5.2}. Thus, the proposition is proved.
\end{proof}
We now want to improve estimate \eqref{5.wgr} and get its dissipative analogue. To this end,
we need to obtain the analogue of estimate \eqref{sm.est5.2}, where the $L^1_b$-norm of $m_{R,x_0}(t)$
will be {\it small}. Fixing also $R$ large enough, the Gronwall inequality would give us the
 desired dissipative estimate. The key idea is to split the solution $u$ in a sum
 \begin{equation}\label{5.split}
 u(t)=\tilde v(t)+\tilde w(t)
 \end{equation}
of more regular ($\widetilde w$) and small ($\widetilde v$) parts using already
proved propositions \ref{Lem5.v} and \ref{Prop5.wdiv} and then replace the function $u$ in \eqref{5.m} by
its small part $\tilde v$ (estimating the term $f(u)-f(v)$ in a more accurate way). To this end,
we need the following lemma.

\begin{lemma}\label{Lem8.usplit} Let the assumptions  of Proposition \ref{Lem5.v}. Then, for every
 $\delta>0$ and $\alpha\in(0,\frac25]$ there exists time $T_\delta$ depending on $\|\xi_u(0)\|_{\E_b}$ such that the SS-solution
  $u(t)$ of problem \eqref{eq.main} possesses
   decomposition \eqref{5.split} such that, for all $t\ge T_\delta$,
	\begin{equation}\label{8.small1}
	 \|\xi_{\tilde v}(t)\|_{\E_b}+\|\tilde v\|_{L^4(t,t+1;L^{12}_b)}\le \delta
	\end{equation}
and
	\begin{equation}\label{8.smooth}
	 \|\xi_{\tilde w}(t)\|_{\E_b^\alpha}+\|\tilde w\|_{L^4(t,t+1;H^{\alpha,12}_b)}\le M_\delta,
	\end{equation}
 where the constant $M_\delta$ depends on $\|g\|_{L^2_b}$ and $\delta$, but is
  independent of $t$ and the norm $\|\xi_u(0)\|_{\E_b}$ of the initial data.	
\end{lemma}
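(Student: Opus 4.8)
The plan is to build the splitting by a \emph{sliding restart} of the decomposition $u=v+w$ already analysed in Propositions \ref{Lem5.v} and \ref{Prop5.wdiv}, arranged so that the decaying part is always ``old'' (hence small) while the regular part is always ``young'' (so that its exponentially growing bound has not had time to become large). The first step is to remove the dependence on the initial data: Theorem \ref{Th4.dis} provides a time $T_0=T_0(\|\xi_u(0)\|_{\E_b})$ and a radius $R_0$ depending only on $\|g\|_{L^2_b}$ such that $\|\xi_u(t)\|_{\E_b}\le R_0$ for all $t\ge T_0$. Every restart below will be performed at a time $\ge T_0$, so that the data of the auxiliary problems are bounded by $R_0$ uniformly, and this is precisely what will make the final constant $M_\delta$ independent of $\|\xi_u(0)\|_{\E_b}$.

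Next I would fix the window length $T=T(\delta)\in\N$ so large that $Q(R_0)e^{-\beta T}\le\delta$, where $\beta$ and $Q$ are the constants from \eqref{5.vdec} (its Strichartz part decays at the same rate). Since the equation is autonomous, for each integer $n\ge T_0+T$ I may restart the decomposition at the time $\tau_n:=n-T\ge T_0$: let $v^{(n)}$ solve \eqref{5.eq.v} with $\xi_{v^{(n)}}(\tau_n)=\xi_u(\tau_n)$ and set $w^{(n)}:=u-v^{(n)}$, so that $\xi_{w^{(n)}}(\tau_n)=0$ as in \eqref{5.eq.w} and, by uniqueness, $v^{(n)}+w^{(n)}=u$ for $t\ge\tau_n$. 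I then define $\tilde v$ and $\tilde w$ piecewise in time by $\tilde v(t):=v^{(n)}(t)$ and $\tilde w(t):=w^{(n)}(t)$ for $t\in[n,n+1)$. This is a genuine decomposition $u=\tilde v+\tilde w$ on each unit interval, the two functions being allowed to jump at integer times.

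On $[n,n+1)$ the decaying part has evolved for a time $t-\tau_n\in[T,T+1)$, so Proposition \ref{Lem5.v} applied after $\tau_n$, together with $\|\xi_u(\tau_n)\|_{\E_b}\le R_0$, gives $\|\xi_{\tilde v}(t)\|_{\E_b}+\|\tilde v\|_{L^4(t,t+1;L^{12}_b)}\le Q(R_0)e^{-\beta T}\le\delta$, which is \eqref{8.small1}. Symmetrically, the regular part has evolved for a time at most $T+2$, so Proposition \ref{Prop5.wdiv} applied after $\tau_n$ (with zero data for $w$) yields $\|\xi_{\tilde w}(t)\|_{\E^\alpha_b}+\|\tilde w\|_{L^4(t,t+1;H^{\alpha,12}_b)}\le e^{K(T+2)}\big(Q(R_0)+Q(\|g\|_{L^2_b})\big)=:M_\delta$, which is \eqref{8.smooth}. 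A sliding interval $[t,t+1]$ meets at most two of the unit pieces, so the space-time norms over $[t,t+1]$ only pick up a bounded factor; and since every restart is based on $u(\tau_n)$ with $\|\xi_u(\tau_n)\|_{\E_b}\le R_0$, the constant $M_\delta$ depends only on $\delta$ and $\|g\|_{L^2_b}$, not on $\|\xi_u(0)\|_{\E_b}$. Setting $T_\delta:=T_0+T$ completes the construction.

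The main obstacle is exactly the exponential growth $e^{Kt}$ in Proposition \ref{Prop5.wdiv}: it rules out any single global decomposition, since carrying one regular remainder forward in time lets its $\E^\alpha_b$-norm blow up, whereas re-decomposing $u$ from scratch forces the decaying part to start at the full size $R_0$ rather than at $\delta$. The sliding restart resolves this tension by decoupling the two clocks — the decaying part is always read off a full window $T$ after its own fresh restart, and the regular part always only a window $T$ after a restart from zero data — at the price of giving up continuity of $\tilde v,\tilde w$ in time, which is harmless for the estimates \eqref{8.small1}--\eqref{8.smooth}. The only points demanding care are keeping the restart data uniformly bounded (handled by the dissipativity step) and controlling the sliding unit-interval Strichartz norms across the jumps.
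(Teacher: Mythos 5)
Your proposal is correct and is essentially the paper's own argument: the paper likewise first reduces to the absorbing ball via \eqref{4.dis}, then restarts the decomposition \eqref{5.eq.v}--\eqref{5.eq.w} at times $T_n=T(n-1)$ and glues the pieces on shifted windows $[nT,(n+1)T)$, so that the $v$-part has always decayed for at least time $T$ while the $w$-part's $e^{Kt}$ bound is only ever evaluated up to elapsed time $2T$. The only cosmetic differences are your unit-length gluing intervals (versus the paper's length-$T$ windows) and your explicit remarks on the straddling interval $[t,t+1]$ and on keeping restarts past the absorbing time $T_0$, which the paper handles by assuming $T'=0$ without loss of generality.
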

\begin{proof} We first note that, due to the dissipative estimate \eqref{4.dis}, the solution $u(t)$
  satisfies the estimate
 \begin{equation}\label{5.abs}
 \|\xi_u(t)\|_{\E_b}+\|u\|_{L^4(t,t+1;L^{12}_b)}\le M_0:=2Q(\|g\|_{L^2_b})
\end{equation}
 for all $t\ge T'$, where $T'$ depends on the norm of the initial data only. By this reason,
 we may assume without loss of generality that $T'=0$ and the solution $u$ satisfies \eqref{5.abs}
  from the very beginning.
\par
   Let us now fix a big number $T=T(\delta)$ and
  consider decompositions $u(t)=v_n(t)+w_n(t)$, $t\ge nT$ which are defined by equations \eqref{5.eq.v}
   and \eqref{5.eq.w}, but starting with the time moment $T_n=T(n-1)$ with the initial data
   $$
    \xi_{v_n}\big|_{t=T_n}=\xi_u\big|_{t=T_n},\ \ \xi_{w_n}\big|_{t=T_n}=0.
   $$
   Then, due to estimates \eqref{5.vdec} and \eqref{5.wgr} we get
   \begin{equation}\label{5.1n}
   \|\xi_{v_n}(t)\|_{\E_b}+\|v_n\|_{L^4_b(t,t+1;L^{12}_b)}\le Q(M_0)e^{-\beta t}\le\delta
   \end{equation}
   if $t\ge T_n+T=Tn$ and $T$ is chosen to satisfy
    $$
    Q(M_0)e^{-\beta T}=\delta
    $$
    and
   \begin{equation}\label{5.2n}
\|\xi_{w_n}(t)\|_{\E_b}+\|w_n\|_{L^4_b(t,t+1;L^{12}_b)}\le e^{Kt}(Q(M_0)+Q(\|g\|_{L^2_b}))\le M_\delta
   \end{equation}
   if $t\le T_n+2T=T(n+1)$ and $M_\delta:=e^{2KT}(Q(M_0)+Q(\|g\|_{L^2_b}))$.
\par
Finally, we define the desired functions $\tilde v(t)$ and $\tilde w(t)$ for $t\ge T_\delta:=T$ as piece-wise
continuous hybrid functions:
\begin{equation}\label{hybrid}
\tilde v(t):=v_n(t),\ \ \tilde w(t):=w_n(t),\ \ t\in [nT,(n+1)T).
\end{equation}
The desired properties of $\tilde v$ and $\tilde w$ are now guaranteed by estimates \eqref{5.1n} and \eqref{5.2n}
 and the lemma is proved. Crucial for the above construction is that we define the functions $v_n(t)$ and $w_n(t)$
 starting from the initial time $T_n=T(n-1)$, but using these functions in \eqref{hybrid} on the time
  interval $t\in[Tn,T(n+1)]$ only and this time shift of length $T$ guarantees that $v_n(t)$ is already
  small for	$t\in[Tn,T(n+1)]$ due to exponential decay of the $v_n$-component. Thus, the proposition is proved. 	
\end{proof}
Since the function $v(t)$ defined by \eqref{5.eq.v} is exponentially decaying, we may also assume that
 $T_\delta$ is large enough that
 \begin{equation}\label{5.vd}
\|\xi_v(t)\|_{\E_b}+\|v\|_{L^4(t,t+1;L^{12}_b)}\le \delta,\ \ t\ge T_\delta.
 \end{equation}
We are now ready to obtain a non-growing estimate for $w$.
\begin{prop}\label{Prop5.ws}
	Let assumptions of Proposition  \ref{Lem5.v} hold.
Then the solution $w$ of problem \eqref{5.eq.w} obeys the estimate
	\begin{equation}\label{5.w-dis}
		\|\xi_w(t)\|_{\E^\alpha_b}+\|w\|_{L^4([t,t+1];H^{\alpha,12}_b)}\le
 Q(\|\xi_u(0)\|_{\E_b})e^{-\beta t}+Q(\|g\|_{L^2_b}),
	\end{equation}
	for some positive  constant $\beta$ and monotone function $Q$ which do not depend on $t$
 and $\xi_u(0)$.
\end{prop}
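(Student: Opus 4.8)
The plan is to upgrade the exponentially divergent bound \eqref{5.wgr} to its dissipative form \eqref{5.w-dis} by rerunning the localized Strichartz/Gronwall argument of Proposition \ref{Prop5.wdiv}, but with the weight $m_{R,x_0}$ from \eqref{5.m} made small in the $L^1_b$-norm. To this end I fix $\delta>0$ (to be chosen small) and work first on $t\ge T_\delta$, where by Lemma \ref{Lem8.usplit} the solution admits the auxiliary decomposition $u=\tilde v+\tilde w$ with $\tilde v$ small in the sense of \eqref{8.small1} and $\tilde w$ bounded in $\E^\alpha_b$ by $M_\delta$ as in \eqref{8.smooth}, and where in addition the component $v$ of \eqref{5.eq.v} is small by \eqref{5.vd}. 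The transient interval $t\le T_\delta$ will be handled at the end via the finite bound \eqref{5.wgr}.

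The heart of the matter is to estimate the nonlinear term $f(u)-f(v)$ appearing in \eqref{eq.wx01} more accurately, exploiting the two genuinely distinct splittings $u=v+w$ and $u=\tilde v+\tilde w$. I would write
\[
f(u)-f(v)=\bigl[f(u)-f(\tilde v)\bigr]+\bigl[f(\tilde v)-f(v)\bigr].
\]
For the first bracket the difference is $u-\tilde v=\tilde w$, which is regular and bounded: bounding its $H^\alpha$-norm via the fractional estimate of Appendix A yields a quartic weight controlled a priori by $M_0$ through \eqref{5.abs}, multiplied by $\|\tilde w\|_{H^{1+\alpha}},\|\tilde w\|_{H^{\alpha,12}}\le M_\delta$; hence this bracket contributes only a bounded source and does not involve the unknown higher norm of $w$. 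For the second bracket the difference is $\tilde v-v=w-\tilde w$, and the crucial gain is that the quartic weight is now carried by the small functions $\tilde v,v$ rather than by $u$, which is precisely the replacement of $u$ by its small part $\tilde v$ in \eqref{5.m}. Writing $\|w-\tilde w\|_{H^{1+\alpha}}\le\|\xi_w\|_{\E^\alpha}+M_\delta$, the unknown $\|\xi_w\|_{\E^\alpha}$ then appears multiplied by the small coefficient $\tilde m_{R,x_0}\sim 1+\|\tilde v\|^4_{L^{12}}+\|v\|^4_{L^{12}}$, together with an extra source of size $\sim\delta M_\delta$.

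Carrying these two estimates through the H\"older/Young-in-time manipulations of \eqref{sm.est3''}--\eqref{sm.est5'} reproduces the analogue of \eqref{sm.est5.2}, but with the kernel $l(t)=K_R\tilde m_{R,x_0}(t)$ now satisfying $\|l\|_{L^1_b}\le\epsilon$, with $\epsilon$ as small as we wish by taking $\delta$ small, the right-hand side consisting of the localization cross-term $CR^{-1}\int_0^te^{-\beta(t-s)}\|\xi_w(s)\|_{\E^\alpha_{b,2R}}\,ds$ plus bounded sources controlled by $M_\delta$ and $M_0$. I would then apply the Gronwall Lemma \ref{Lem8.gr}: since $\int_s^t l(\kappa)\,d\kappa\le\epsilon+\epsilon(t-s)$, the Gronwall exponent no longer overcomes the decay $e^{-k_\alpha\beta(t-s)}$, so the resulting estimate is genuinely dissipative; moreover, as noted after \eqref{est.w.div}, the constant $C$ is now independent of $R$, so fixing $R$ large absorbs the cross-term after taking the supremum over $x_0$. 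This gives \eqref{5.w-dis} for $t\ge T_\delta$, and combining with the finite bound \eqref{5.wgr} on $[0,T_\delta]$ (absorbed into $Q(\|\xi_u(0)\|_{\E_b})$) extends it to all $t\ge0$.

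I expect the main obstacle to be the localized fractional estimate for the second bracket: one must verify that although $\tilde v$ and $v$ lie individually only in $H^1_b$, their difference $\tilde v-v=w-\tilde w$ carries the full $\E^\alpha_b$-regularity of $w$, so that the quintic difference can legitimately be bounded by the small quartic weight times the unknown higher norm through the Kato-Ponce inequality \eqref{2.KP} and the chain rule of Appendix A. Keeping careful track of which factor plays the role of the \emph{difference} in each bracket --- and ensuring that the regular part $\tilde w$ feeds only the bounded source $Q(\|g\|_{L^2_b})$ while the unknown $\|\xi_w\|_{\E^\alpha}$ is multiplied solely by the small weight --- is the delicate point on which the dissipativity hinges.
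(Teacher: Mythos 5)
Your proposal is correct and follows essentially the same route as the paper's proof: the same two-bracket splitting $f(u)-f(v)=[f(\tilde v+\tilde w)-f(\tilde v)]+[f(\tilde v)-f(v)]$ built on Lemma \ref{Lem8.usplit}, with the first bracket absorbed into a bounded source via \eqref{A.mf} and the second estimated via the split version \eqref{A.mfs} (which is exactly the localized fractional estimate you flag as the delicate point, applied to $w_1=w$ and $w_2=-\tilde w$ separately so that only the $w$-term carries the unknown norm), followed by the same small-kernel Gronwall argument and the observation that smallness of the exponent makes the constant $R$-independent, allowing absorption of the cross-term by taking $R$ large. The one small bookkeeping point is the order of choices: since $\tilde m_{R,x_0}$ contains the $R$-dependent constant $K_R$, the paper fixes $\delta=\delta(R)$ \emph{for each given} $R$ (rather than $\delta$ first), which is consistent with your scheme because the resulting constants $C'$, $C''$ are independent of both $R$ and $\delta$.
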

\begin{proof} We will utilise again estimate \eqref{sm.est3}, but will estimate the difference $f(u)-f(v)$
 in a more accurate way. To this end, we first note that without loss of generality we may assume
  that $T_\delta=0$ in Lemma \ref{Lem8.usplit}. Indeed, in general case we just apply Strichartz estimate \eqref{lin.est}
  to equation for $w$ starting not from $t=0$, but from $t=T_\delta$ and estimate the initial
  data $\xi_w\big|_{t=T_\delta}$ using Proposition \ref{Prop5.wdiv}. This will give us the same
   type of estimate \eqref{sm.est3} up to maybe different function $Q_R$. As in the proof of
    Proposition \ref{Prop5.wdiv}, we also assume that $f'(0)=0$.
    \par
We split the difference $f(u)-f(v)$ as follows
$$
f(u)-f(v)=[f(\tilde v+\tilde w)-f(\tilde v)]+[f(\tilde v)-f(v)]
$$
The first term can be estimated using inequality \eqref{A.mf} exactly as in the proof
of Proposition \eqref{Prop5.wdiv}. Thus, due to estimates
\eqref{8.small1} and \eqref{8.smooth}, we have
\begin{equation}\label{5.ww}
\int_0^te^{-\beta(t-s)}\|\psi_{R,x_0}(f(\tilde v+\tilde w)-f(\tilde v))\|_{H^\alpha}\,ds\le Q_R=Q_R(M_0,\delta).
\end{equation}
To estimate the second term we note that
$$
\tilde v-v=(u-\tilde w)-(u-w)=w-\tilde w
$$
and, therefore, due to \eqref{A.mfs},
\begin{multline}
\|\psi_{R,x_0}(f(\tilde v)-f(v)\|_{H^\alpha}\le
 C(1+\|\tilde v\|_{L^{12}(B^{2R}_{x_0})}+\| v\|_{L^{12}(B^{2R}_{x_0})})^{4-\alpha}\times\\\times
 (\|\tilde v\|_{H^1(B^{2R}_{x_0})}+\| v\|_{H^1(B^{2R}_{x_0})})^\alpha\times\\\times
 \(\|\psi_{R,x_0}w\|^{1-\alpha}_{H^{1+\alpha}}\|\psi_{R,x_0} w\|^\alpha_{H^{\alpha,12}}+
 \|\psi_{R,x_0}\tilde w\|^{1-\alpha}_{H^{1+\alpha}}\|\psi_{R,x_0} \tilde w\|^\alpha_{H^{\alpha,12}}\).
\end{multline}
The term containing $\tilde w$ can be estimated exactly as in \eqref{5.ww} and using \eqref{8.small1}
 and \eqref{5.vd} for estimating the first term, we arrive at
\begin{multline}\label{5.ww1}
\int_0^te^{-\beta(t-s)}\|\psi_{R,x_0}(f(\tilde v)-f( v))\|_{H^\alpha}\,ds\le\\
\delta^\alpha\int_0^te^{-\beta(t-s)}\tilde m_{R,x_0}(s)^{1-\alpha/4}\|\psi_{R,x_0}w\|^{1-\alpha}_{H^{1+\alpha}}
\|\psi_{R,x_0} w\|^\alpha_{H^{\alpha,12}}\,ds
 +Q_R(M_0,\delta),
\end{multline}
where
$$
\tilde m_{R,x_0}(t):=K_R(1+\|v(t)\|^4_{L^{12}(B^{2R}_{x_0})}+\|\tilde v(t)\|^4_{L^{12}(B^{2R}_{x_0})}).
$$
Inserting the obtained estimates into \eqref{sm.est3} and using the H\"older inequality (exactly as in the
 proof of Proposition \ref{Prop5.wdiv}), we end up with
\begin{multline}\label{sm.est-good}
\|\xi_{w_{x_0}}(t)\|_{\E^\alpha}+\left(\int_0^t e^{-4\beta(t-s)}
\|w_{x_0}(s)\|^4_{H^{\alpha,12}}\,ds\right)^\frac{1}{4}\le\\\le
 \delta^{\frac{7\alpha}{4-4\alpha}}\int_0^te^{-k_\alpha\beta(t-s)}\tilde m_{R,x_0}(s)\|\xi_{w_{x_0}}(s)\|_{\E^\alpha}\,ds+\\+
 CR^{-1}\int_0^te^{-\beta(t-s)}\|\xi_w(s)\|_{\E^\alpha_{b,2R}}\,ds+Q_R(\|\xi_u(0)\|_{\E_b})e^{-\beta t}
+Q_R(\|g\|_{L^2_b}),
\end{multline}
where in contrast to \eqref{sm.est5.2} we have an extra small parameter $\delta$. Crucial for us that
 the $L^1_b$-norm of $\tilde m_{R,x_0}$ is independent of $\delta$. Therefore, for every fixed $R$ we may fix
  $\delta=\delta(R)$ such that
  $$
  \delta^{\frac{7\alpha}{4-4\alpha}}\|\tilde m_{R,x_0}\|_{L^1_b}\le \frac12 k_\alpha\beta
  $$
  and, therefore, the Gronwall inequality \eqref{8.gr1} applied to \eqref{sm.est-good} gives the
   dissipative analogue of \eqref{est.w.div}
 \begin{multline}\label{est.w.ddis}
\|\xi_{w_{x_0}}(t)\|_{\E^\alpha}\le
\frac{C'}R\int_0^te^{-k_\alpha\beta(t-s)/2}\|\xi_w(s)\|_{\E^\alpha_{b,2R}}\,ds+\\+
Q_R(\|\xi_u(0)\|_{\E_b})e^{-k_\alpha\beta t/2}+Q_R(\|g\|_{L^2_b}).
 \end{multline}
It is important that the constant $C'$ here is independent of $R$.
 Taking the supremum over $x_0\in\R^3$ from both sides of this inequality, analogously to \eqref{5.eb-div} we arrive at
 \begin{multline}
\|\xi_{w}(t)\|_{\E^\alpha_{b,R}}\le
\frac {C''}{R}\int_0^te^{-k_\alpha\beta(t-s)/2}\|\xi_w(s)\|_{\E^\alpha_{b,R}}\,ds+\\+
Q_R(\|\xi_u(0)\|_{\E_b})e^{-k_\alpha\beta t/2}+Q_R(\|g\|_{L^2_b}).
 \end{multline}
Fixing now $R>0$ large enough that $C''/R\le\frac14k_\alpha\beta$ and applying the Gronwall inequality again,
 we get the desired dissipative estimate for $\|\xi_w(t)\|_{\E_b}$:
 $$
 \|\xi_w(t)\|_{\E_{b,R}}\le Q(\|\xi_u(0)\|_{\E_{b,R}})e^{-k_\alpha \beta t/4}+Q(\|g\|_{L^2_b}).
 $$
 The dissipative estimate for the Strichartz norm follows now from  \eqref{sm.est-good} exactly as in
  Proposition \ref{Prop5.wdiv}. So, the proposition is proved.
\end{proof}
The next corollary gives the well-posedness and dissipativity of solutions of equation
 \eqref{eq.main} in higher energy spaces.

\begin{cor}\label{Cor5.adis} Let the assumptions of Proposition \ref{Lem5.v} hold and let, in addition,
$\xi_u(0)\in\E_b^\alpha$ for some $\alpha\in(0,\frac25]$.
  Then the corresponding solution $u(t)\in\E_b^\alpha$ for all $t\ge0$ and the following estimate holds:
  \begin{equation}\label{5.aldis}
\|\xi_u(t)\|_{\E_b^\alpha}+\|u\|_{L^4(t,t+1;H^{\alpha,12}_b)}\le
Q(\|\xi_u(0)\|_{\E^\alpha_b})e^{-\beta t}+Q(\|g\|_{L^2_b}),
  \end{equation}
  where the positive constant $\beta$ and monotone increasing function $Q$ may depend on $\alpha$,
  but is independent on $g$, $u$ and $t$.
\end{cor}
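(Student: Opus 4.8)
The plan is to work with the splitting $u(t)=v(t)+w(t)$ of \eqref{5.vw}--\eqref{5.eq.w} and to upgrade each summand to the higher space $\E^\alpha_b$ separately. The regular component $w$ requires nothing new: Proposition \ref{Prop5.ws} already furnishes the dissipative bound \eqref{5.w-dis} in $\E^\alpha_b$, and since that proposition uses only $\xi_u(0)\in\E_b$ and $g\in L^2_b$, it applies here verbatim and produces precisely the term $Q(\|g\|_{L^2_b})$ together with an exponentially decaying piece. Hence the whole task reduces to controlling the decaying component $v$ --- which now starts from $\xi_v(0)=\xi_u(0)\in\E^\alpha_b$ --- in the norm of $\E^\alpha_b$. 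Once that is achieved, I would add the two bounds, use the embedding $\|\xi_u(0)\|_{\E_b}\le C\|\xi_u(0)\|_{\E^\alpha_b}$ to absorb the $w$-contribution into $Q(\|\xi_u(0)\|_{\E^\alpha_b})$, and read off \eqref{5.aldis}; the Strichartz part follows in the same manner, and $u(t)=v(t)+w(t)\in\E^\alpha_b$ gives the asserted well-posedness.

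For $v$ I would reproduce the scheme of Propositions \ref{Prop5.wdiv} and \ref{Prop5.ws} with $w_{x_0}$ replaced by $v_{x_0}=\psi_{R,x_0}v$. The homogeneous equation \eqref{5.eq.v} is already of the desired shape once I absorb the linear terms $Lv$ and $f'(0)v$ into the elliptic operator: for $L$ large the shifted zeroth-order coefficient $1+L+f'(0)$ stays positive, so the weighted linear Strichartz estimate \eqref{3.al-weight} at regularity $\alpha$ applies without change, and the only forcing left is the genuinely nonlinear remainder $\tilde f(v):=f(v)-f'(0)v$, which satisfies $\tilde f(0)=\tilde f'(0)=0$. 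Estimating $\|\psi_{R,x_0}\tilde f(v)\|_{H^\alpha}$ by the fractional bounds \eqref{A.mf}, \eqref{A.mfs} of Appendix A (with the second argument set to $0$) and running the same H\"older-in-time manipulations that lead to \eqref{sm.est5.2}, I obtain an integral inequality of exactly that form, but whose coefficient $m_{R,x_0}(s)$ now carries a genuine positive power of the small quantity $\|v(s)\|_{L^{12}(B^{2R}_{x_0})}$ instead of a standalone constant.

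The decay is then extracted exactly as in Proposition \ref{Prop5.ws}. Since $v$ already obeys the $\E_b$- and Strichartz-dissipative estimate \eqref{5.vdec}, for any prescribed $\delta$ there is a time $T_\delta$ beyond which $\|v\|_{L^4(t,t+1;L^{12}_b)}\le\delta$, so the $L^1_b$-mass of the nonlinear coefficient over $[T_\delta,\infty)$ can be made smaller than any fixed fraction of $k_\alpha\beta$. On $[0,T_\delta]$ I would first record the exponentially growing a priori bound of Proposition \ref{Prop5.wdiv} type, which both guarantees $\xi_v(t)\in\E^\alpha_b$ for all finite $t$ and supplies the initial datum for the Gronwall lemma \ref{Lem8.gr}; on $[T_\delta,\infty)$ the smallness of the coefficient turns that lemma into a genuine contraction. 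Fixing $R$ large afterwards to absorb the nonlocal $CR^{-1}$ term (the same two-stage choice, $\delta$ first and $R$ second, as in the passage from \eqref{est.w.ddis} to the final estimate of Proposition \ref{Prop5.ws}), I arrive at $\|\xi_v(t)\|_{\E^\alpha_b}+\|v\|_{L^4(t,t+1;H^{\alpha,12}_b)}\le Q(\|\xi_u(0)\|_{\E^\alpha_b})e^{-\beta t}$, with no forcing term since \eqref{5.eq.v} has none.

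The step I expect to be the main obstacle is precisely this $v$-estimate, and within it the requirement that the coefficient be honestly small for large $t$ rather than merely bounded. This is why reducing to $\tilde f'(0)=0$ (to kill the standalone constant in the Moser-type bound) and exploiting the already-proved exponential decay of $v$ in the lower-order norms are both indispensable: without the genuine smallness of $\|v\|_{L^{12}}$ the Gronwall argument would only reproduce the exponentially growing bound of Proposition \ref{Prop5.wdiv} and never the dissipative one. The remaining bookkeeping --- localization by $\psi_{R,x_0}$, the H\"older and Young exponents built on $k_\alpha$, and the absorption of the $CR^{-1}$ nonlocal term --- is identical to what is already carried out for $w$, so I would invoke it rather than repeat it.
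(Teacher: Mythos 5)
Your argument is correct, but it takes a genuinely different (and noticeably heavier) route than the paper, which disposes of the corollary in one line: since the initial data now lie in $\E^\alpha_b$, the decaying component is simply unnecessary --- one takes $v\equiv 0$ in the splitting, so that $u$ itself plays the role of $w$ in \eqref{5.eq.w}, and repeats the proof of Proposition \ref{Prop5.ws} word by word; the only change is that the linear estimate \eqref{lin.est} is now applied with the nonzero initial datum $\xi_u(0)\in\E^\alpha_b$, which contributes exactly the decaying term $Q(\|\xi_u(0)\|_{\E^\alpha_b})e^{-\beta t}$ to the function $H(t)$ in the Gronwall step, while the splitting $u=\tilde v+\tilde w$ of Lemma \ref{Lem8.usplit} (which requires only $\xi_u(0)\in\E_b$) still supplies the smallness of the Gronwall coefficient. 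You instead keep $v\not\equiv 0$ and prove from scratch an $\E^\alpha_b$-decay estimate for the nonlinear auxiliary problem \eqref{5.eq.v}, which forces you to rerun the whole Proposition \ref{Prop5.wdiv}/Proposition \ref{Prop5.ws} machinery a second time (exponentially growing bound first, restart at $T_\delta$, smallness of the coefficient, $R$-independence of the constant multiplying $R^{-1}$). That extra pass does close: your reduction to $\tilde f'(0)=0$ is the same trick the paper uses in Proposition \ref{Prop5.wdiv}, and after it the Moser-type bound \eqref{A.mf} with second argument $0$ produces the pure factor $\|v\|^\alpha_{H^1(B^{2R}_{x_0})}$ --- note that it is this $H^1$-factor, rather than the $L^{12}$-factor you name, that carries no standalone constant and decays exponentially by \eqref{5.vdec}, so your Gronwall coefficient in fact has finite and eventually small $L^1$-mass uniformly in $x_0$, and the two-stage choice of $T_\delta$ and $R$ goes through exactly as in Proposition \ref{Prop5.ws}, consistently with the caveat of Remark \ref{Rem5.igo-go}. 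What your route buys is a standalone dissipative $\E^\alpha_b$-estimate for equation \eqref{5.eq.v} with smooth data, which has some independent interest; what it costs is precisely the duplication that the paper's choice $v\equiv 0$ was designed to avoid.
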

Indeed, the proof of this estimate follows word by word to the proof of Proposition \ref{Prop5.ws} and
even slightly simpler since we may take $v(t)\equiv0$, so we leave it to the reader.

\begin{rem}\label{Rem5.igo-go} To the best of our knowledge the idea to split the
 solution $u$ into a sum \eqref{5.split} of regular and small components which are
 constructed using the previously obtained splitting into decaying
  and regular, but exponentially growing components has been suggested in \cite{ZCPAA2004} for the
  study of cubic non-autonomous damped wave equations. It has been widely
  used later in various modifications, see e.g., \cite{CP,YangSun} and, in particular, in \cite{SZ-UMN}
  which is most
   close to our work and where the finite energy solutions of quintic wave equation have been studied. However,
   only the smallness in mean for the $\tilde v$ component has been obtained there
   $$
   \int_\tau^t\|\tilde v(\tau)\|_{L^{12}}^4+\|\xi_{\tilde v}(\tau)\|_{\E}\,d\tau\le C_\delta+\delta(t-\tau)
   $$
   for all $\delta>0$. The extra  term  $C_\delta$ is not dangerous for finite energy solutions
   where the second integral in the right-hand side of \eqref{sm.est-good} is absent, but
    is {\it not acceptable} in our case since it leads to the dependence of the
    constant $C''$ in \eqref{est.w.ddis} on $\delta=\delta(R)$ and, as a result, we will
     be unable to make the constant $C'R^{-1}$ small no matter how big $R$ is.
     \par
     Thus, the result of Lemma \ref{Lem8.usplit} is an essential and useful improvement of the
      scheme which has been somehow overseen in the previous papers and which has an independent interest.
\end{rem}

\section{Attractors and concluding remarks}\label{s6}
The aim of this section is to  build up the attractor theory for the SS-solutions of the damped
 quintic wave equation \eqref{eq.main} and to discuss its natural generalizations.
 We restrict ourselves to consider the autonomous case only, so assumption \eqref{5.g}
  is assumed to be satisfied (see Remark \ref{Rem6.non} for a brief discussion of the non-autonomous case).
  In this case, due to Theorem \ref{Th4.dis}, equation \eqref{eq.main} defines
   a dissipative semigroup $S(t)$, $t\ge0$, in the uniformly local phase space $\E_b$:
\begin{equation}\label{6.sem}
S(t)\xi_0:=\xi_u(t),\ \ t\ge0,\ \ S(t):\E_b\to\E_b,
\end{equation}
where $u(t)$ is a uniquely defined SS-solution of problem \eqref{eq.main} with
 the initial data $\xi_0\in\E_b$. Moreover, estimate \eqref{4.dis} now reads
 \begin{equation}
\|S(t)\xi_0\|_{\E_b}\le Q(\|\xi_0\|_{\E_b})e^{-\beta t}+Q(\|g\|_{L^2_b})
 \end{equation}
and guarantees the existence of an absorbing ball for the solution semigroup in $\E_b$.
 However, in contrast to the case of bounded domains or/and finite energy solutions, the solution
 semigroup $S(t)$ does not possess in general a compact global attractor in the space $\E_b$, so the
  concept of the so-called {\it locally compact} attractor is naturally used instead,
   see \cite{MirZel} and references therein for more details.
\par
We recall that, by definition, a set $\Cal A$ is a locally compact global attractor of a semigroup $S(t)$ acting
 in the uniformly local space $\E_b$ if
 \par
 1. $\Cal A$ is bounded in $\E_b$ and is compact in $\E_{loc}$. The latter means that for any ball $B^R_{x_0}$,
 the restriction $\Cal A\big|_{B^R_{x_0}}$ is compact in $\E(B^R_{x_0})$.
 \par
 2. $\Cal A$ is strictly invariant, i.e. $S(t)\Cal A=\Cal A$ for all $t\ge0$.
 \par
 3. $\Cal A$ attracts the images of all bounded in $\E_b$ sets in the topology of $\E_{loc}$. This means that, for every
 bounded set $B\subset\E_b$ and every neighbourhood $\Cal O(\Cal A)$ of the attractor $\Cal A$ in
  the topology of $\E_{loc}$, there exists $T=T(B,\Cal O)$ such that
  $$
  S(t)B\subset \Cal O(\Cal A),\ \ \text{for}\ \ t\ge T.
  $$
  The existence of such an attractor can be verified using the following standard attractor's existence result.

  \begin{prop}\label{Prop6.atr} Let the semigroup $S(t):\E_b\to\E_b$ be continuous for every fixed $t\ge0$ in the topology
   of $\E_{loc}$ and possesses a bounded in $\E_b$ and compact in $\E_{loc}$ attracting set $\Cal B$. Then
   there exists a (locally compact global) attractor $\Cal A\subset\Cal B$ for this semigroup. Moreover,
    this attractor is generated by all bounded trajectories of the semigroup $S(t)$ defined
     for all $t\in\R$:
\begin{equation}\label{6.rep}
\Cal A=\Cal K\big|_{t=0},
\end{equation}
where
\begin{equation}\label{6.ker}
\Cal K:=\{\xi_u\in L^\infty(\R,\E_b),\ \ \xi_u(t+\tau)=S(t)\xi_u(\tau),\ \ \tau\in\R,\ \ t\in\R_+\}
\end{equation}
is the set of all bounded complete trajectories of the semigroup $S(t)$ (the  kernel of $S(t)$
 in the terminology of Chepyzhov and Vishik, see \cite{CV}).
   \end{prop}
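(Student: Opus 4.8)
The plan is to follow the classical $\omega$-limit set construction of the global attractor, the only subtlety being that one must constantly distinguish the two topologies involved: boundedness is measured in the strong norm of $\E_b$, while convergence and compactness are measured in the weaker $\E_{loc}$-topology. A preliminary remark that legitimizes all the limiting arguments is that, on any $\E_b$-bounded set, the $\E_{loc}$-topology is metrizable, since it is generated by the countable family of seminorms $\xi\mapsto\|\xi\|_{\E(B^n_0)}$, $n\in\N$; hence I may work throughout with sequences rather than nets.

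First I would define the candidate attractor as the $\E_{loc}$-$\omega$-limit set of the attracting set,
$$
\Cal A:=\bigcap_{s\ge0}\overline{\bigcup_{t\ge s}S(t)\Cal B}^{\,\E_{loc}}.
$$
Because $\Cal B$ is attracting and $\E_{loc}$-compact, for large $s$ the tail $\bigcup_{t\ge s}S(t)\Cal B$ lies in an arbitrarily small $\E_{loc}$-neighbourhood of $\Cal B$ and is therefore $\E_{loc}$-precompact; the sets under the intersection are thus nested, nonempty and $\E_{loc}$-compact, so $\Cal A$ is a nonempty $\E_{loc}$-compact set. Since each $S(t)\Cal B$ remains in a fixed $\E_b$-bounded neighbourhood of the $\E_b$-bounded set $\Cal B$, the set $\Cal A$ is bounded in $\E_b$, and as $\Cal B$ is $\E_{loc}$-closed one obtains the claimed inclusion $\Cal A\subset\Cal B$. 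This settles property~1.

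Next I would prove strict invariance $S(t)\Cal A=\Cal A$ using the assumed $\E_{loc}$-continuity of $S(t)$. The inclusion $S(t)\Cal A\subset\Cal A$ is immediate: if $\xi=\lim_n S(t_n)\xi_n$ in $\E_{loc}$ with $\xi_n\in\Cal B$ and $t_n\to\infty$, then $S(t)\xi=\lim_n S(t+t_n)\xi_n\in\Cal A$. For the reverse inclusion, given $\xi\in\Cal A$ as above I set $\eta_n:=S(t_n-t)\xi_n$ for $t_n\ge t$; these lie in the precompact tail, so along a subsequence $\eta_n\to\eta\in\Cal A$ in $\E_{loc}$, and continuity yields $S(t)\eta=\lim_n S(t_n)\xi_n=\xi$, whence $\xi\in S(t)\Cal A$. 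Property~3 then follows in the usual way: $\Cal A=\omega(\Cal B)$ attracts $\Cal B$ in $\E_{loc}$, $\Cal B$ attracts every $\E_b$-bounded set, and composing these (again extracting $\E_{loc}$-convergent subsequences from precompact tails to preclude escape from any neighbourhood of $\Cal A$) shows that $\Cal A$ attracts all bounded sets in $\E_{loc}$.

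Finally I would establish the kernel representation $\Cal A=\Cal K|_{t=0}$. The inclusion $\Cal K|_{t=0}\subset\Cal A$ is easy: for a bounded complete trajectory $\xi_u\in\Cal K$ one has $\xi_u(0)=S(t_n)\xi_u(-t_n)$ with $\{\xi_u(-t_n)\}$ $\E_b$-bounded and $t_n\to\infty$, so $\xi_u(0)$ is an $\E_{loc}$-limit of such images and hence lies in $\Cal A$. The reverse inclusion $\Cal A\subset\Cal K|_{t=0}$ is the step I expect to be the main obstacle, since it requires constructing a complete bounded trajectory through a prescribed point $\xi_0\in\Cal A$. I would use strict invariance to select, for each $n$, a preimage in $\Cal A$ mapping onto $\xi_0$ after time $n$, extend this backward by the semigroup to all integer-shifted times, and then invoke the $\E_{loc}$-compactness of $\Cal A$ in a diagonal argument to pass to the limit and obtain a genuine complete trajectory $\xi_u:\R\to\Cal A$ with $\xi_u(0)=\xi_0$; the $\E_{loc}$-continuity of $S(t)$ secures the semigroup identity in the limit, and $\E_b$-boundedness is automatic because the trajectory lies in $\Cal A$. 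The delicate point is precisely that compactness is available only in the weak $\E_{loc}$-topology, so the limiting trajectory must be verified to stay $\E_b$-bounded (it does, lying in $\Cal A$) and to satisfy the semigroup relation, both of which the diagonal construction guarantees.
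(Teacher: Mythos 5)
The paper itself contains no proof of this proposition: it is stated as a standard criterion with a pointer to \cite{BV,CV}. Your proposal reconstructs exactly the classical argument from those references --- $\Cal A$ defined as the $\E_{loc}$-$\omega$-limit set of $\Cal B$, invariance via the assumed $\E_{loc}$-continuity of $S(t)$, attraction by subsequence extraction, and the kernel representation via backward selection --- and its overall architecture is correct. However, two of your local justifications are wrong as stated, and one step is only gestured at.

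First, a fixed tail $\bigcup_{t\ge s}S(t)\Cal B$ need \emph{not} be $\E_{loc}$-precompact: the inclusion of the tail in an $\eb$-neighbourhood of $\Cal B$ holds only for $s\ge s(\eb)$ depending on $\eb$, a subset of a fixed $\eb_0$-neighbourhood of a compact set need not be totally bounded, and since no continuity in $t$ is assumed, the finite-time portion of the tail is uncontrolled. The correct patch is cheaper: any $\xi\in\Cal A$ is a limit of $S(t_n)\xi_n$ with $\xi_n\in\Cal B$, $t_n\to\infty$, and attraction gives $\dist_{\E_{loc}}(S(t_n)\xi_n,\Cal B)\to0$, so by compactness of $\Cal B$ (which is closed, $\E_{loc}$ being metrizable) one gets $\Cal A\subset\Cal B$ directly; nonemptiness, compactness and $\E_b$-boundedness of $\Cal A$ all follow from this inclusion. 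This also replaces your second false claim, that each $S(t)\Cal B$ ``remains in a fixed $\E_b$-bounded neighbourhood of $\Cal B$'': attraction in the hypotheses is only in the $\E_{loc}$-topology, and $\E_{loc}$-neighbourhoods of $\Cal B$ constrain the norm on finitely many balls only, hence are never bounded in $\E_b$; no uniform $\E_b$-bound on the orbit of $\Cal B$ follows from the assumptions, and the boundedness of $\Cal A$ must come from $\Cal A\subset\Cal B$, as your own next sentence in effect provides. Every later appeal to the ``precompact tail'' should accordingly be read as: the times tend to infinity, hence the points approach the compact $\Cal B$ and a subsequence converges --- in that form all your extractions are valid. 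Second, in the attraction step the word ``composing'' hides the only non-trivial point, since attraction is not transitive in general: one needs, for a limit point $\zeta$ of $S(t_n)\xi_n$ with $\xi_n$ in a bounded set, the factorization $S(t_n)\xi_n=S(s)\bigl(S(t_n-s)\xi_n\bigr)$ for each fixed $s$, plus compactness of $\Cal B$ and continuity of $S(s)$, to conclude $\zeta\in S(s)\Cal B\subset\overline{\bigcup_{t\ge s}S(t)\Cal B}$ for all $s$, i.e. $\zeta\in\Cal A$; your parenthetical points in the right direction but this factorization is the missing ingredient. The kernel representation part is sound; note only that strict invariance already permits a recursive choice of preimages $\eta_{-n-1}\in\Cal A$ with $S(1)\eta_{-n-1}=\eta_{-n}$, which produces a consistent complete bounded trajectory immediately and makes your compactness-plus-diagonal argument unnecessary (though it, too, is valid and is indeed what one must do in settings where only a weaker invariance is available).
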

For the proof of this criterion, see e.g., \cite{BV,CV}.
\par
Applying this criterion, to the solution semigroup $S(t)$ generated by equation \eqref{eq.main}, we
get the following result.

\begin{theorem}\label{Th6.attr} Let the nonlinearity $f$ satisfy \eqref{4.f} and the external force $g$ enjoy \eqref{5.g}.
 Let also the semigroup $S(t)$ associated with equation \eqref{eq.main} be defined by \eqref{6.sem}. Then this
  semigroup possesses a (locally compact) global attractor $\Cal A$ which is a bounded set of
   $\E^{\alpha}_b$ for $\alpha\in(0,\frac25]$. Moreover, the representation formula \eqref{6.rep} holds and
    the set $\Cal K$ of all bounded complete solutions of \eqref{eq.main} possesses the following estimate:
    \begin{equation}\label{6.ka}
\|\xi_u\|_{L^\infty(\R,\E_b^\alpha)}+\|u\|_{L^4_b(\R,H^{\alpha,12}_b)}\le Q(\|g\|_{L^2_b}),\ \ \xi_u\in\Cal K,
    \end{equation}
where the function $Q$ is independent of $\xi_u\in\Cal K$ and $\alpha\le\frac25$.
\end{theorem}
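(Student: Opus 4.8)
The plan is to apply the abstract existence criterion of Proposition \ref{Prop6.atr}. Thus I must produce a set $\Cal B$ that is bounded in $\E_b$, compact in $\E_{loc}$ and attracting in the topology of $\E_{loc}$, and verify that each map $S(t)$ is continuous on bounded subsets of $\E_b$ with respect to $\E_{loc}$. For the attracting set I would take the closed ball
$$
\Cal B:=\{\xi\in\E^\alpha_b:\ \|\xi\|_{\E^\alpha_b}\le 2Q(\|g\|_{L^2_b})\}
$$
with $\alpha\in(0,\tfrac25]$ and $Q$ the function from \eqref{5.w-dis}. This set is bounded in $\E_b$ by the continuous embedding $\E^\alpha_b\subset\E_b$. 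Its compactness in $\E_{loc}$ follows from the compactness of the embeddings $H^{1+\alpha}(B^R_{x_0})\hookrightarrow H^1(B^R_{x_0})$ and $H^\alpha(B^R_{x_0})\hookrightarrow L^2(B^R_{x_0})$ on bounded domains: given a sequence in $\Cal B$ one extracts, via a diagonal argument over a countable family of balls covering $\R^3$, a subsequence converging in $\E(B^R_{x_0})$ for every ball, and the local lower semicontinuity of the $\E^\alpha_b$-norm keeps the limit inside $\Cal B$.

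That $\Cal B$ attracts every bounded set $B\subset\E_b$ is exactly the content of the smoothing decomposition: writing $u=v+w$ as in \eqref{5.vw}, Proposition \ref{Lem5.v} gives $\|\xi_v(t)\|_{\E_b}\le Q(\rho)e^{-\beta t}$ and Proposition \ref{Prop5.ws} gives $\|\xi_w(t)\|_{\E^\alpha_b}\le Q(\rho)e^{-\beta t}+Q(\|g\|_{L^2_b})$, uniformly for $\|\xi_0\|_{\E_b}\le\rho$. Hence for $t$ large $\xi_w(t)\in\Cal B$ and $\mathrm{dist}_{\E_b}(S(t)\xi_0,\Cal B)\le\|\xi_v(t)\|_{\E_b}\to0$, which a fortiori yields attraction in $\E_{loc}$.

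The $\E_{loc}$-continuity of $S(t)$ is where the finite speed of propagation enters. If $\xi_0^n\to\xi_0$ in $\E_{loc}$ with $\sup_n\|\xi_0^n\|_{\E_b}<\infty$, then the dissipative estimate \eqref{4.dis} together with the energy-to-Strichartz bound \eqref{4.es} makes the Strichartz norms of the corresponding solutions uniformly bounded, so the constant in \eqref{4.cone} of Proposition \ref{Prop4.cone} is uniform in $n$. Since the external force is the same, \eqref{4.cone} reads $\|\xi_{u_n}(t)-\xi_u(t)\|_{\E(B^{R-t}_{x_0})}\le C\|\xi_0^n-\xi_0\|_{\E(B^R_{x_0})}\to0$ for every fixed $R>t$ and $x_0$, which is precisely $S(t)\xi_0^n\to S(t)\xi_0$ in $\E_{loc}$. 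With the hypotheses of Proposition \ref{Prop6.atr} verified, I obtain the locally compact global attractor $\Cal A\subset\Cal B$ together with the representation \eqref{6.rep}.

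It remains to prove the uniform $\E^\alpha_b$ bound \eqref{6.ka} on the set $\Cal K$ of complete bounded trajectories, which also upgrades $\Cal A=\Cal K|_{t=0}$ to a bounded subset of $\E^\alpha_b$. Here the trick is to run the smoothing decomposition backward in time. Fix $\xi_u\in\Cal K$ and $t\in\R$; for each $\tau<t$ decompose the solution on $[\tau,t]$ as $u=v_\tau+w_\tau$ with $\xi_{v_\tau}(\tau)=\xi_u(\tau)$, $\xi_{w_\tau}(\tau)=0$. Since $\sup_s\|\xi_u(s)\|_{\E_b}=:\rho_0<\infty$, Propositions \ref{Lem5.v} and \ref{Prop5.ws} give $\|\xi_{v_\tau}(t)\|_{\E_b}\le Q(\rho_0)e^{-\beta(t-\tau)}$ and $\|\xi_{w_\tau}(t)\|_{\E^\alpha_b}\le Q(\rho_0)e^{-\beta(t-\tau)}+Q(\|g\|_{L^2_b})$. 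Letting $\tau\to-\infty$, the first quantity tends to $0$, so $w_\tau(t)\to u(t)$ in $\E_b$, while the $\E^\alpha_b$-norms of $w_\tau(t)$ stay bounded by $Q(\|g\|_{L^2_b})$; local lower semicontinuity of the $\E^\alpha_b$-norm then yields $\|\xi_u(t)\|_{\E^\alpha_b}\le Q(\|g\|_{L^2_b})$, uniformly in $t$ and in the trajectory. The same limiting argument applied to the Strichartz part of \eqref{5.w-dis} controls $\|u\|_{L^4_b(\R,H^{\alpha,12}_b)}$, giving \eqref{6.ka}. The main obstacle is precisely this last step: one must guarantee that the $\tau$-dependent factor $Q(\rho_0)$ is attached only to the exponentially decaying term, so that it disappears as $\tau\to-\infty$, and that the surviving bound $Q(\|g\|_{L^2_b})$ is genuinely independent of the chosen complete trajectory — this is exactly what the dissipative, as opposed to merely bounded, form of \eqref{5.w-dis} provides.
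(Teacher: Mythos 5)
Your proposal is correct and follows essentially the same route as the paper: the abstract criterion of Proposition \ref{Prop6.atr} applied to a ball $\Cal B$ in $\E^\alpha_b$, with $\E_{loc}$-continuity of $S(t)$ drawn from the cone estimate \eqref{4.cone} and attraction from the decomposition of Propositions \ref{Lem5.v} and \ref{Prop5.ws}. The only difference is one of detail, not of substance: where the paper declares the compactness of $\Cal B$ obvious and calls \eqref{6.ka} an ``immediate corollary'' of \eqref{5.w-dis}, you correctly spell out the diagonal compactness argument and the backward-in-time limit $\tau\to-\infty$ along a complete trajectory with lower semicontinuity of the $\E^\alpha_b$-norm, which is exactly the argument the paper has in mind.
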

\begin{proof} Indeed, the continuity of the operators $S(t)$ in $\E_{loc}$ is an immediate
 corollary of estimate \eqref{4.cone}. In addition, estimates \eqref{5.vdec} and \eqref{5.w-dis} guarantee that
 the set
 $$
 \Cal B_R:=\{\xi\in\E_b^\alpha,\ \ \|\xi\|_{\E_b^\alpha}\le R\}
 $$
 is an attracting set for $S(t)$ if $R=R(\|g\|_{L^2_b})$ is large enough and $\alpha\in(0,\frac25]$.
  Obviously this set is bounded and closed in $\E_b$ and is compact in $\E_{loc}$. Thus, the existence
   of an attractor $\Cal A\subset\Cal B_R$ follows from Proposition \ref{Prop6.atr}.
   Finally, estimate \eqref{6.ka} is also an immediate corollary of \eqref{5.w-dis} and the theorem is proved.
\end{proof}
As usual, the further regularity of the attractor $\Cal A$ can be obtained by the standard bootstrapping arguments and is restricted by
 the regularity of $f$ and $g$ only. In particular, under our assumptions we may guarantee that
 the solutions are $\E^1_b$-regular.

\begin{theorem}\label{Th6.a-sm} Let the assumptions of Theorem \ref{Th6.attr} hold. Then the attractor
 $\Cal A$ of problem \eqref{eq.main} constructed in the previous theorem is a bounded set of $\Cal E^1_b$.
 Moreover, problem \eqref{eq.main} is globally well-posed in the higher energy space $\E^1_b$ and
  the following dissipative estimate holds:
  \begin{equation}\label{6.dis1}
  \|\xi_u(t)\|_{\E^1_b}\le Q(\|\xi_u(0)\|_{\E^1_b})e^{-\beta t}+Q(\|g\|_{L^2_b}),
  \end{equation}
  where the positive constant $\beta$ and monotone function $Q$ are independent of $u$, $g$ and $t$.
\end{theorem}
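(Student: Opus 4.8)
The plan is to prove Theorem~\ref{Th6.a-sm} by a bootstrap argument that jumps from the intermediate regularity $\E^\alpha_b$ (with $\alpha\in(0,\tfrac25]$, already available from Corollary~\ref{Cor5.adis}) up to the full $\E^1_b$ regularity. The overall strategy mirrors the one used throughout Section~\ref{s5}: localize using the cut-off $\psi_{R,x_0}$ and the weight $\phi_{\eb,x_0}$, reduce to the finite-energy/non-weighted setting via Proposition~\ref{Lem2.T} and Corollary~\ref{Cor2.reg}, and apply the linear Strichartz estimate \eqref{lin.est} (now at the regularity level $\alpha=1$) treating $f(u)$ as a perturbative right-hand side. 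The output of Corollary~\ref{Cor5.adis} is that, once $\xi_u(0)\in\E^\alpha_b$, the solution stays in $\E^\alpha_b$ uniformly and its Strichartz norm $\|u\|_{L^4(t,t+1;H^{\alpha,12}_b)}$ is dissipatively controlled. This is exactly the input needed to estimate $f(u)$ one derivative higher.

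First I would write the equation satisfied by $\d_x u$ (or, more cleanly, apply $(1-\Dx)^{1/2}$ to \eqref{eq.main}) to obtain a linear damped wave equation for the differentiated unknown, with right-hand side $g$ plus the commutator/product term coming from $f(u)$. The key nonlinear estimate is a Kato--Ponce bound on $\|\psi_{R,x_0}\nabla f(u)\|_{L^1(t,t+1;L^2_b)}$ (or on the fractional norm $\|f(u)\|_{H^\alpha}$ at the next level), using Proposition~\ref{Prop2.KP} together with the appendix estimates \eqref{A.mf}, \eqref{A.mfs}. Since $f(u)=u^5+h(u)$ with $|h''|\le C(1+|u|^q)$, $q<3$, the term $\nabla f(u)\sim f'(u)\nabla u\sim (1+|u|^4)\nabla u$ is controlled by interpolating the $L^{12}_b$ Strichartz norm of $u$ against the $\E^\alpha_b$-energy, exactly the quantities made dissipative in Corollary~\ref{Cor5.adis}. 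One then runs the localized linear Strichartz estimate \eqref{lin.est} with $\alpha=1$, absorbs the leading nonlinear contribution into the left-hand side via H\"older and Young inequalities in time (as in \eqref{sm.est3''}--\eqref{sm.est5'}), collects the remainder commutator terms carrying the factor $R^{-1}$ from \eqref{5.cut}, takes the supremum over $x_0$, and closes with the Gronwall lemma~\ref{Lem8.gr} after choosing $R$ large. This yields the dissipative estimate \eqref{6.dis1} for $\|\xi_u(t)\|_{\E^1_b}$, and, restricting to complete bounded trajectories $\xi_u\in\Cal K$, shows $\Cal A=\Cal K\big|_{t=0}$ is bounded in $\E^1_b$.

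The main obstacle I expect is the nonlinear term at the $\E^1_b$ level: the jump from $\E^\alpha_b$ to $\E^1_b$ is not a single clean step because the critical quintic growth makes $\nabla f(u)\sim u^4\nabla u$ barely borderline, and the intermediate exponent $\alpha\le\tfrac25$ was chosen precisely so that $\theta_{x_0}=(1-\Dx)^{-1}(\psi_{R,x_0}g)$ lands in $H^{\alpha,12}$ (see the remark after \eqref{eq.wx01}). Pushing to $\alpha=1$ loses this slack, so I would likely need an intermediate bootstrap: use $\xi_u\in\E^\alpha_b$ with $\alpha\in(0,\tfrac25]$ to first gain, say, $\E^{2\alpha}_b$, and iterate finitely many times until reaching $\E^1_b$, at each stage re-running the localized Strichartz--Kato--Ponce machinery with the previous regularity as input. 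Equally delicate is controlling the $g$-term, which is only $L^2_b$: as in \eqref{eq.wx01} one shifts by the elliptic lift $\theta_{x_0}$ so that the forcing becomes $H^1_b$-regular enough, using elliptic regularity of $1-\Dx$ (Corollary~\ref{Cor2.reg}) and the embedding $H^2_b\subset H^{1,12}_b$ in $\R^3$.

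Once the differentiated equation is set up, the time-integral manipulations and the Gronwall closure are essentially the routine computations already carried out in Propositions~\ref{Prop5.wdiv} and~\ref{Prop5.ws}, so I would invoke them rather than repeat them. The global well-posedness in $\E^1_b$ then follows from the dissipative a priori estimate \eqref{6.dis1} combined with the uniqueness already established in Proposition~\ref{Prop4.cone}, and the attractor's $\E^1_b$-boundedness is immediate from the representation \eqref{6.rep}: every complete bounded trajectory satisfies \eqref{6.dis1} with $t\to-\infty$, forcing the $\E^1_b$-norm on $\Cal A$ to be bounded by $Q(\|g\|_{L^2_b})$.
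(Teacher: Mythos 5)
Your overall strategy---one more bootstrap step from $\E^\alpha_b$ using the dissipative control of Corollary \ref{Cor5.adis} plus the elliptic $\theta$-trick for the $L^2_b$ forcing---is the right idea, and your central nonlinear bound (that $\nabla f(u)\sim (1+|u|^4)\nabla u$ is controlled by interpolating the $L^4(t,t+1;H^{\alpha,12}_b)$ Strichartz norm against the $\E^\alpha_b$ energy) is precisely the paper's estimate \eqref{6.int}, valid for every $\alpha>\frac18$. However, you do not exploit the full strength of this estimate: once \eqref{6.int} is available, $f(u)$ is a \emph{known} external force in $L^1(t,t+1;H^1_b)$, bounded by quantities already made dissipative in \eqref{5.aldis}, so the level-one problem is purely linear. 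Accordingly, the paper replaces the whole Section \ref{s5} machinery by the elementary linear splitting \eqref{6.v}--\eqref{6.w} ($v$ solving the homogeneous linear equation, $w$ the linear equation with right-hand side $g-f(u)$) and applies the weighted linear estimate \eqref{3.al-weight} with $\alpha=1$ exactly once; there is no Kato--Ponce estimate at level one, no absorption via H\"older/Young in time, no Gronwall closure with a large choice of $R$, and no iteration $\alpha\to2\alpha\to\cdots$. Your proposed multi-step iteration is, as described, not supported by the paper's tools: the composition estimates \eqref{A.mf} and \eqref{A.mfs} are proved only for $\alpha\in(0,\frac25]$, and Proposition \ref{Prop2.KP} requires $\alpha<1$, so ``re-running the localized Strichartz--Kato--Ponce machinery'' at intermediate levels $\alpha\in(\frac25,1)$ would need new lemmas you have not supplied---and which, by the above, are unnecessary, since a single step from any $\alpha>\frac18$ reaches $\E^1_b$.

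There is also one concrete error: your treatment of the forcing term rests on the claimed embedding $H^2_b\subset H^{1,12}_b$ in $\R^3$, which is false. By Sobolev embedding, $H^{2}(\R^3)\subset H^{s,12}(\R^3)$ requires $2-\frac32\ge s-\frac14$, i.e.\ $s\le\frac34$; this is exactly why the remark after \eqref{eq.wx01} restricts to $\alpha\le\frac25$, and the slack genuinely disappears at $\alpha=1$, so the lift $\theta=(1-\Dx)^{-1}g\in H^2_b$ has no finite $H^{1,12}_b$ norm and your scheme would break at the last stage if the level-one Strichartz norm of the solution were actually required. The paper sidesteps this entirely because \eqref{6.dis1} asserts only an $\E^1_b$ energy bound: after the shift $\bar w=w-\theta$, the forcing becomes $-f(u)\in L^1(t,t+1;H^1_b)$, the initial datum $\{-\theta,0\}$ lies in $\E^1_b$ with norm $\le C\|g\|_{L^2_b}$, and only the energy component of \eqref{3.al-weight} with $\alpha=1$ is used, the $H^{1,12}$-Strichartz component being simply discarded. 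Your concluding reductions (well-posedness in $\E^1_b$ from the a priori bound together with the uniqueness of Proposition \ref{Prop4.cone}, taking $v\equiv0$ for \eqref{6.dis1}, and boundedness of $\Cal A$ in $\E^1_b$ via the complete bounded trajectories in \eqref{6.rep}) agree with the paper and are fine.
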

\begin{proof} Actually, one extra step of bootstrapping is enough to improve the regularity
of the attractor from
 $\E^\alpha_b$ ($\alpha>\frac18$) to $\E^1_b$. Moreover, the non-linear decomposition \eqref{5.eq.v}
  and \eqref{5.eq.w} is no more necessary and much simpler linear splitting works. Namely,
  let now $u(t)=v(t)+w(t)$ where, in contrast to Section \ref{s5}, the function $v$ solves
   the linear equation
\begin{equation}\label{6.v}
  \Dt v+\gamma\Dt v+(1-\Dx)v=0,\ \ \xi_{v}\big|_{t=0}=\xi_u\big|_{t=0}
\end{equation}
and the smooth component $w$ solves
\begin{equation}\label{6.w}
\Dt w+\gamma\Dt w+(1-\Dx)w=g-f(u),\ \ \xi_w\big|_{t=0}=0.
\end{equation}
Indeed, applying estimate \eqref{3.al-weight} to equation \eqref{6.v} and taking
 the supremum over $x_0\in\R^3$, we arrive at the decaying estimate
 \begin{equation}\label{6.vdec}
 \|\xi_v(t)\|_{\E^\alpha_b}\le C\|\xi_u(0)\|_{\E^\alpha_b}e^{-\beta t}.
 \end{equation}
On the other hand, as not difficult to verify using the growth restriction of $f$ together with the
 Sobolev embedding theorem
 and proper interpolation inequalities,
 \begin{equation}\label{6.int}
\|f(u)\|_{L^1(t,t+1;H^1_b)}\le C\(1+\|u\|^4_{L^4(t,t+1;H^{\alpha,12}_b)}\)
\|\xi_u\|_{L^\infty(t,t+1;\E^\alpha_b)},
 \end{equation}
 where $\alpha>\frac18$. Therefore, we may apply estimate \eqref{3.al-weight} with $\alpha=1$ to equation \eqref{6.w} and obtain
  with the help of estimate \eqref{5.aldis} and the trick with function $\theta$ described
   at the beginning of the proof of Proposition \ref{Prop5.wdiv} that
 \begin{equation}\label{6.w-dis}
\|\xi_w(t)\|_{\E^1_b}\le Q(\|\xi_u(0)\|_{\E^\alpha_b})e^{-\beta t}+Q(\|g\|_{L^2_b}).
 \end{equation}
 Estimates \eqref{6.vdec} and \eqref{6.w-dis} guarantee that the attractor
  $\Cal A$ is a bounded set in $\E^1_b$. Finally, in order to get estimate \eqref{6.dis1}, it is
   sufficient to take $v\equiv0$ and repeat the derivation of \eqref{6.w-dis}. Thus, the theorem is proved.
\end{proof}
\begin{rem} Arguing in a standard way (e.g., using the energy method, see \cite{ball,MS}) one can easily show that
 the attractor $\Cal A$ is a compact set in $\E^1_{loc}$. However, the inclusion $\Cal A\subset\E^{1+\eb}_b$
  for some positive $\eb$
  is not true in general if $g\in L^2_b$ only (we need more regularity of $g$ to get this result).
\end{rem}
\begin{rem}\label{Rem6.ent} Since $H^2_b\subset C_b$, the growth rate of $f$ is no more important if
Theorem \ref{Th6.a-sm} is proved (we may just cut off the non-linearity $f$ outside of the attractor),
 so all further results about the properties of the attractor obtained for energy subcritical
  (sub-cubic) growth rate of the non-linearity are automatically extended to the quintic case.
\par
In particular, as known (see e.g., \cite{MirZel} and references therein), in contrast
 to the case of bounded domains, locally compact attractors in uniformly local spaces usually
  have infinite Hausdorff and fractal dimensions. By this reason, one usually replaces
   the dimension estimates by the proper estimates of Kolmogorov's $\eb$-entropy.
   \par
   We recall that if $K$ is a compact set in a metric space $X$, then by Hausdorff criterion it can be covered by
   finitely many of $\eb$-balls for any $\eb>0$. Let $N_\eb(K,X)$ be the minimal number of such balls.
   Then, by definition, the Kolmogorov's entropy of $K$ in $X$ is the following number:
   $$
   \Bbb H_\eb(K,X):=\log_2N_\eb(K,X),
   $$
see \cite{KoT93} for details. In particular, the case of finite fractal dimension corresponds to the estimate
$$
\Bbb H_\eb(K,X)\le d_f(K)\log_2\frac1\eb+o(\log_2\frac1\eb).
$$
Since the attractor $\Cal A$ is not compact in $\E_b$, but only in $\E_{loc}$, it is natural introduce
 the quantities $\Bbb H_\eb(\Cal A\big|_{B^R_{x_0}},\E(B^R_{x_0}))$ and study their
 dependence on two parameters $R$ and $\eb$. It is known, see \cite{MirZel,ZCPAM} and references therein that,
 for many classes of dissipative PDEs in unbounded domains, these quantities possess the following universal
  estimates:
  \begin{equation}\label{6.Aent}
\Bbb H_\eb(\Cal A\big|_{B^R_{x_0}},\E(B^R_{x_0}))\le C(R+\log_2\frac1\eb)^3\log_2\frac1\eb,
  \end{equation}
  where $C$ is independent of $\eb$ and $R$ and $\eb>0$
   (the exponent $3$ here is the space dimension $x\in\R^3$) and these estimates are sharp, see \cite{EMZ1}.
   \par
   For the case of damped wave equation \eqref{eq.main} with sub-cubic growth rate of the
    non-linearity $f$ they are obtained in \cite{Zhyp} (see also \cite{MPS}). As explained above,
    the result of Theorem \ref{Th6.a-sm} allows us to extend this estimate to the case of
     quintic wave equations in $\R^3$.
\end{rem}

\begin{rem} Similarly to the case of bounded domains, we may introduce {\it exponential attractors}
 for the problem \eqref{eq.main}. Since the global attractor is already infinite-dimensional,
 the properly defined exponential attractor must be also infinite dimensional, so in order
  to control its size it is natural (following \cite{EMZ}) to use universal entropy
   estimates \eqref{6.Aent}. Namely, by definition,  $\Cal M$ is an exponential attractor
    for the semigroup $S(t):\E_b\to\E_b$ if
    \par
    1. The set $\Cal M$ is bounded in $\E_b$ and compact in $\E_{loc}$.
    \par
    2. The set $\Cal M$ enjoys the universal entropy estimates \eqref{6.Aent}.
    \par
    3. The set $\Cal M$ is semi-invariant, i.e., $S(t)\Cal M\subset\Cal M$ for $t\ge0$.
    \par
    4. The exponential attraction property
\begin{equation}
    \dist_{\E_b}(S(t)B,\Cal M)\le Q(\|B\|_{\E_b})e^{-\beta t}
\end{equation}
    holds for every bounded set $B$ in $\E_b$. Here $\dist_{\E_b}(X,Y)$ stands for the
    non-symmetric Hausdorff distance between sets $X$ and $Y$ in $\E_b$ and the positive
     constant $\beta>0$ and monotone function $Q$ are independent of $B$ and $t$.
\par
The existence of such an object in the case of reaction-diffusion equations in unbounded domains in
uniformly local phase spaces is verified in \cite{EMZ}. The estimates for differences between solutions
 for damped wave equations allows to expect the same result to be true for equation \eqref{eq.main} as well.
 We return to this question somewhere else.
 \par
 It also worth to emphasize that the attraction to the exponential attractor holds
 in a {\it uniform}  topology of the space $\E_b$ and this is one of  extra advantages of
 the exponential attractors approach. It is well-known, that for
  the global attractor $\Cal A$ we have the attraction property in a local topology
  of $\E_{loc}$ only (there are natural examples where
   the attraction property in $\E_b$ fail, see \cite{MirZel} for more details).
\end{rem}
\begin{rem}\label{Rem6.non} To conclude we note that the autonomous case of equation \eqref{eq.main} has been chosen
just for simplicity. All of the asymptotic smoothing results hold for general non-autonomous
 external forces $g(t)$ as well if we pose some extra regularity assumptions on $g$, for instance,
 $$
 g\in L^1_b(\R,H^1_b) \ \ \text{or}\ \ \ g\in W^{1,1}_b(\R,L^2_b).
 $$
The only difference is that we will need to consider instead of global attractors their proper
 generalizations to the non-autonomous case (e.g., uniform or pull-back attractors). We also expect that
 most part of the results obtained in \cite{SZ-UMN} for the case of periodic boundary conditions can be
  naturally extended to the case of infinite-energy solutions in the whole space. We return to this problem
  somewhere else.
\end{rem}

\appendix
\section{Estimates in fractional Sobolev spaces}\label{sB}
In this Appendix we discuss the estimates in fractional Sobolev spaces which are necessary
 to treat the nonlinear term $f(u)$ in equation \eqref{eq.main}. We start with the
  corollary of Kato-Ponce inequality which is proved in \cite{SZ-UMN}.
\begin{prop}\label{Lem.v4wHa}
	Let $\alpha\in(0,2/5]$ and let the  functions $v$ and $w$ be such that
\begin{equation}\label{A2.vw}
	v\in L^{12}(\R^3)\cap H^1(\R^3),\quad w\in H^{\alpha,12}(\R^3)\cap H^{1+\alpha}(\R^3).
	\end{equation}
Assume also that the function $h\in C^1(\R)$,  satisfies $h(0)=0$ and
\begin{equation}\label{ap.h}
	|h'(v)|\leq C(1+|v|^3)
	\end{equation}
for some constant $C>0$ and all $v\in\R$. Then $h(v)w\in H^{\alpha}(\R^3)$ and the following estimate holds:
\begin{equation}\label{a.est.main}
	\|h(v)w\|_{H^\alpha}\le C_\alpha\left(1+\|v\|_{L^{12}}^{4-\alpha}\right)\|v\|_{H^1}^\alpha
	\|w\|^{1-\alpha}_{H^{1+\alpha}}\|w\|^{\alpha}_{H^{\alpha,12}},
	\end{equation}
for some positive constant $C_\alpha$.
\end{prop}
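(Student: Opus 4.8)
The plan is to estimate $\|h(v)w\|_{H^\alpha}$ (recall $H^\alpha=H^{\alpha,2}$) by applying the Kato--Ponce inequality of Proposition~\ref{Prop2.KP} on $V=\R^3$ to the product $h(v)\cdot w$ with $r=2$, and then to control each resulting factor by Gagliardo--Nirenberg interpolation. Writing \eqref{2.KP} with two independent H\"older splittings $\tfrac12=\tfrac1{p_i}+\tfrac1{q_i}$ gives
\begin{equation*}
\|h(v)w\|_{H^\alpha}\le C\|h(v)\|_{L^{p_1}}\|w\|_{H^{\alpha,q_1}}+C\|w\|_{L^{p_2}}\|h(v)\|_{H^{\alpha,q_2}}=:\mathrm{I}+\mathrm{II}.
\end{equation*}
The exponents are dictated by the homogeneity of the target \eqref{a.est.main}: since \eqref{ap.h} and $h(0)=0$ give $|h(v)|\le C(|v|+|v|^4)$, the leading behaviour is $v^4w$, so on the right of \eqref{a.est.main} the factor $v$ carries total weight $4$ and $w$ total weight $1$. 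For $\mathrm I$ I would take $p_1=\tfrac{12}{4+\alpha}$, $q_1=\tfrac{12}{2-\alpha}$; for $\mathrm{II}$, $q_2=\tfrac{12}{4+5\alpha}$, $p_2=\tfrac{12}{2-5\alpha}$ (one checks $\tfrac1{p_i}+\tfrac1{q_i}=\tfrac12$ in both cases).

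For term $\mathrm I$ I would bound $\|h(v)\|_{L^{p_1}}\lesssim\|v\|_{L^{p_1}}+\|v\|_{L^{4p_1}}^4$ and interpolate $L^{4p_1}$ between $L^{12}$ and $L^6\supset H^1$ with parameter $\beta=\alpha/4$; this produces exactly $\|v\|_{L^{12}}^{4-\alpha}\|v\|_{H^1}^{\alpha}$ (the linear piece and the constant being swallowed by the prefactor $1+(\cdots)$). The factor $\|w\|_{H^{\alpha,q_1}}$ I would control by the Gagliardo--Nirenberg inequality $\|w\|_{H^{\alpha,q_1}}\lesssim\|w\|_{H^{1+\alpha}}^{1-\alpha}\|w\|_{H^{\alpha,12}}^{\alpha}$, whose scaling balance $\alpha-\tfrac3{q_1}=(1-\alpha)(1+\alpha-\tfrac32)+\alpha(\alpha-\tfrac14)=\tfrac{5\alpha-2}4$ holds for the chosen $q_1$; moreover the interpolated regularity $(1-\alpha)(1+\alpha)+\alpha^2=1$ exceeds the target order $\alpha$, so we sit in the subcritical (hence admissible) regime.

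For term $\mathrm{II}$ I would estimate $\|h(v)\|_{H^{\alpha,q_2}}$ by the fractional chain rule valid for $0<\alpha<1$, namely $\|h(v)\|_{H^{\alpha,q_2}}\lesssim\|h'(v)\|_{L^{4}}\|v\|_{H^{\alpha,b}}$ with $b=\tfrac{12}{1+5\alpha}$ and $\tfrac1{q_2}=\tfrac14+\tfrac1b$. By \eqref{ap.h}, $\|h'(v)\|_{L^4}\lesssim 1+\|v\|_{L^{12}}^3$, while Gagliardo--Nirenberg gives $\|v\|_{H^{\alpha,b}}\lesssim\|v\|_{L^{12}}^{1-\alpha}\|v\|_{H^1}^{\alpha}$ (here the target order is $\alpha=\alpha\cdot1$ and the scaling $\alpha-\tfrac3b=(1-\alpha)(-\tfrac14)+\alpha(1-\tfrac32)=-\tfrac{1+\alpha}4$ matches). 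Together these reconstruct $(1+\|v\|_{L^{12}}^{4-\alpha})\|v\|_{H^1}^{\alpha}$. The companion factor $\|w\|_{L^{p_2}}$ I would control by the Gagliardo--Nirenberg/Sobolev embedding $\|w\|_{L^{p_2}}\lesssim\|w\|_{H^{1+\alpha}}^{1-\alpha}\|w\|_{H^{\alpha,12}}^{\alpha}$, whose balance $-\tfrac3{p_2}=(1-\alpha)(1+\alpha-\tfrac32)+\alpha(\alpha-\tfrac14)$ forces precisely $p_2=\tfrac{12}{2-5\alpha}$.

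I expect the genuinely delicate point to be this last embedding, and it is exactly where the hypothesis $\alpha\le\tfrac25$ enters: the computation forces $p_2=\tfrac{12}{2-5\alpha}$, which is finite for $\alpha<\tfrac25$ and equals $\infty$ at the endpoint $\alpha=\tfrac25$. At that endpoint the estimate must be read as the $L^\infty$ bound, which survives because $H^{\alpha,12}(\R^3)\hookrightarrow L^\infty$ for $\alpha>\tfrac14$; checking the admissibility of all the Gagliardo--Nirenberg exponents and that no forbidden $L^\infty$ endpoint is crossed is the main obstacle, whereas the Kato--Ponce and fractional-chain-rule applications are routine once the exponents above are fixed. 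Collecting $\mathrm I$ and $\mathrm{II}$ and absorbing all lower-order contributions into the prefactor $1+\|v\|_{L^{12}}^{4-\alpha}$ then yields \eqref{a.est.main}.
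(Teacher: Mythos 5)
Your overall skeleton is the intended one: the paper itself does not prove this proposition but quotes it from \cite{SZ-UMN} as a corollary of the Kato--Ponce inequality, and your combination of Proposition \ref{Prop2.KP} with $r=2$, the fractional chain rule and Gagliardo--Nirenberg interpolation, with the exponents $p_1=\tfrac{12}{4+\alpha}$, $q_1=\tfrac{12}{2-\alpha}$, $q_2=\tfrac{12}{4+5\alpha}$, $b=\tfrac{12}{1+5\alpha}$, is the correct (indeed essentially forced) treatment of the top-order, $|v|^4$-type contribution. But two of your steps fail as written. First, the low-order pieces are \emph{not} absorbed by the prefactor with these same exponents. On $\R^3$ the inequality $\|h'(v)\|_{L^4}\lesssim 1+\|v\|_{L^{12}}^3$ is false outright, since constants do not lie in $L^4(\R^3)$ and \eqref{ap.h} allows $h'(0)\neq0$ (in the paper's application $h=f'$, so $h'(0)=f''(0)$ is arbitrary). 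Similarly, in your term $\mathrm I$ the claimed absorption $\|v\|_{L^{p_1}}\lesssim\bigl(1+\|v\|_{L^{12}}^{4-\alpha}\bigr)\|v\|_{H^1}^{\alpha}$ fails: since $p_1=\tfrac{12}{4+\alpha}<6$, reaching $L^{p_1}$ from $L^{12}$ and $H^1$ costs an $H^1$-weight of at least $\tfrac{3+\alpha}{5}>\alpha$, and the family $v_\lambda(x)=\lambda^{1/4}\varphi(\lambda x)$, $\lambda\to0$, has $\|v_\lambda\|_{L^{12}}\sim1$ and $\|v_\lambda\|_{L^{p_1}}\sim\lambda^{-(3+\alpha)/4}$, while the right-hand side is $\sim\lambda^{-5\alpha/4}$, so the absorption fails for every $\alpha<\tfrac34$. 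The cure is to split $h=h_1+h_2$ with $|h_1'|\le C$ and $|h_2'(v)|\le C|v|^3$ (cut $h'$ off at $|v|=1$) and to give the Lipschitz piece its \emph{own} H\"older pairs in \eqref{2.KP} --- e.g.\ $\|v\|_{L^{12/(1+\alpha)}}\lesssim\|v\|_{L^{12}}^{1-\alpha}\|v\|_{H^1}^{\alpha}$ paired with $\|w\|_{H^{\alpha,12/(5-\alpha)}}$ for $\alpha\le\tfrac14$, a modified choice for $\alpha\in(\tfrac14,\tfrac25]$, and $\|h_1'(v)\|_{L^\infty}\le C$ in the chain rule for term $\mathrm{II}$. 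These choices do close, but the point is that the $H^1$-exponent in \eqref{a.est.main} must come out \emph{exactly} $\alpha$ for each piece (it can be padded neither up nor down, as both of your Kato--Ponce terms show), so the exponents cannot be shared between the linear and quartic parts of $h$.

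Second, the endpoint $\alpha=\tfrac25$ is a genuine gap rather than the routine check you describe. There $p_2=\tfrac{12}{2-5\alpha}=\infty$, which is excluded from Proposition \ref{Prop2.KP} as stated ($1<p_i,q_i<\infty$), so an endpoint Leibniz rule with an $L^\infty$ factor is already needed; and your substitute $H^{\alpha,12}(\R^3)\hookrightarrow L^\infty$ (valid for $\alpha>\tfrac14$) only gives $\|w\|_{L^\infty}\lesssim\|w\|_{H^{\alpha,12}}$, i.e.\ weight $1$ on $\|w\|_{H^{\alpha,12}}$ and $0$ on $\|w\|_{H^{1+\alpha}}$, which does not reproduce \eqref{a.est.main} --- and the precise weights $(1-\alpha,\alpha)$ are exactly what the H\"older-in-time step \eqref{sm.est3''} later consumes. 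What is needed is the two-norm inequality $\|w\|_{L^\infty}\lesssim\|w\|_{H^{1+\alpha}}^{1-\alpha}\|w\|_{H^{\alpha,12}}^{\alpha}$ at $\alpha=\tfrac25$. This is not interpolation-plus-embedding (your interpolated space is the critical $H^{1,3}(\R^3)$, which does not embed into $L^\infty$), but it does hold by a Littlewood--Paley crossover argument: $\|\Delta_jw\|_{L^\infty}\lesssim\min\bigl(2^{j(\frac12-\alpha)}\|w\|_{H^{1+\alpha}},\,2^{j(\frac14-\alpha)}\|w\|_{H^{\alpha,12}}\bigr)$, and since $\tfrac12-\alpha>0>\tfrac14-\alpha$, summing over $j$ and optimizing the crossover yields $\|w\|_{L^\infty}\lesssim\|w\|_{H^{1+\alpha}}^{4\alpha-1}\|w\|_{H^{\alpha,12}}^{2-4\alpha}$, whose weights $(4\alpha-1,2-4\alpha)$ agree with the required $(1-\alpha,\alpha)$ precisely when $\alpha=\tfrac25$. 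This is where the restriction $\alpha\le\tfrac25$ in the statement really comes from; as submitted, your argument leaves the proposition unproved at that endpoint and broken for the low-order part of $h$ at every $\alpha$.
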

We need the analogue of this estimate for a  bounded domain $V\subset\R^3$ (used
in the paper  for $V=B^R_{x_0}$ only).
\begin{cor}\label{CorA.V} Let $V$ be a bounded domain in $\R^3$ with smooth boundary and let the assumptions of
Proposition \ref{Lem.v4wHa} hold. Then the following estimate holds:
\begin{multline}\label{A.V}
\|h(v)w\|_{H^\alpha(V)}\le \\\le C_\alpha\left(1+\|v\|_{L^{12}(V)}^{4-\alpha}\right)
\|v\|_{H^1(V)}^\alpha
	\|w\|^{1-\alpha}_{H^{1+\alpha}(V)}\|w\|^{\alpha}_{H^{\alpha,12}(V)}.
\end{multline}
\end{cor}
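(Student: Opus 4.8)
The plan is to deduce the estimate on $V$ from its already established whole-space version, Proposition~\ref{Lem.v4wHa}, by means of the universal extension operator for the Bessel potential spaces. Since $\partial V$ is smooth, the restriction $u\mapsto u|_V$ is a retraction possessing a bounded co-retraction (extension) $E$ which, as recorded just before \eqref{2.int}, may be chosen \emph{independently} of $1<p<\infty$ and of $|s|\le N$ for any fixed $N$. I would fix such an $E$ with $N=2$; this is admissible because $0<\alpha\le\frac25$ forces $1+\alpha\le 2$.

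First I would set $\tilde v:=Ev$ and $\tilde w:=Ew$ on $\R^3$. Since the single operator $E$ is bounded on each of the four relevant spaces $L^{12}=H^{0,12}$, $H^{1}=H^{1,2}$, $H^{\alpha,12}$ and $H^{1+\alpha}=H^{1+\alpha,2}$ simultaneously, with one constant $C=C(V,\alpha)$, I obtain
\begin{equation*}
\|\tilde v\|_{L^{12}(\R^3)}\le C\|v\|_{L^{12}(V)},\qquad
\|\tilde v\|_{H^1(\R^3)}\le C\|v\|_{H^1(V)},
\end{equation*}
and likewise $\|\tilde w\|_{H^{\alpha,12}(\R^3)}\le C\|w\|_{H^{\alpha,12}(V)}$ and $\|\tilde w\|_{H^{1+\alpha}(\R^3)}\le C\|w\|_{H^{1+\alpha}(V)}$. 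In particular $\tilde v\in L^{12}\cap H^1$ and $\tilde w\in H^{\alpha,12}\cap H^{1+\alpha}$, so the hypotheses \eqref{A2.vw} are satisfied by the pair $(\tilde v,\tilde w)$, while the conditions \eqref{ap.h} on $h$ are unchanged.

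Next I would apply Proposition~\ref{Lem.v4wHa} to $\tilde v,\tilde w$ to bound $\|h(\tilde v)\tilde w\|_{H^\alpha(\R^3)}$ by the right-hand side of \eqref{a.est.main} expressed through the whole-space norms of $\tilde v,\tilde w$. The point is that $h(\tilde v)\tilde w$ is an extension of $h(v)w$: since $\tilde v|_V=v$ and $h$ acts pointwise one has $h(\tilde v)|_V=h(v)$, and together with $\tilde w|_V=w$ this gives $\bigl(h(\tilde v)\tilde w\bigr)|_V=h(v)w$. As $\|\cdot\|_{H^\alpha(V)}$ is the factor norm (the infimum over all extensions), it follows that $\|h(v)w\|_{H^\alpha(V)}\le\|h(\tilde v)\tilde w\|_{H^\alpha(\R^3)}$. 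Substituting the four extension bounds into the right-hand side of \eqref{a.est.main} then produces exactly \eqref{A.V}, with $C_\alpha$ enlarged by a fixed power of $\|E\|$.

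The argument is purely a transplantation: the nonlinearity is handled entirely on $\R^3$ through Proposition~\ref{Lem.v4wHa}, and only the linear operators $E$ and restriction intervene in passing between $V$ and $\R^3$. The one place requiring care is that a \emph{single} extension $E$ must control all four norms at once; this is exactly the universality (independence of $E$ from $p$ and from $s$ in the range $|s|\le2$) stated after \eqref{2.int}, so no additional estimate is needed there.
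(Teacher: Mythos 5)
Your proposal is correct and is exactly the paper's argument: the paper dismisses the corollary as ``an immediate corollary of \eqref{a.est.main}, the definition of the spaces $H^\alpha(V)$ and the existence of an extension operator,'' which is precisely your transplantation via a single universal extension $E$, the pointwise identity $h(Ev)\,Ew\big|_V=h(v)w$, and the factor-norm inequality. Your write-up merely spells out the details (including the correct observation that one operator $E$ controls all four norms simultaneously for $|s|\le 2$), so there is nothing to add.
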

Indeed, this is an immediate corollary of \eqref{a.est.main}, the definition of the spaces $H^\alpha(V)$ and
 the existence of an extension operator from $V$ to $\R^3$.
 \par
 We now turn to the estimates of $f(u)-f(v)$ which are crucial for our proof
  of asymptotic smoothing property.

\begin{cor}\label{CorA.f} Let $f\in C^2$ satisfy assumptions \eqref{4.f} and $f'(0)=0$. In addition, let the
functions $u$ and $v$ satisfy \eqref{A2.vw}. Assume also that the cut-off function
 $\psi\in C^\infty_0(\R^3)$ be such that $\psi(x)\equiv1$ for $x\in B^1_0$ and $\psi(x)\equiv0$ for $x\notin B^{3/2}_0$.
\par
Let finally $\psi_{R,x_0}(x):=\psi(R^{-1}(x-x_0))$ for some $R>1$ and $x_0\in\R$.
 Then the following estimate holds:
\begin{multline}\label{A.mf}
\|\psi_{R,x_0}(f(u)-f(v))\|_{H^\alpha}\le
C\(1+\|u\|_{L^{12}(B^{2R}_{x_0})}+\|v\|_{L^{12}(B^{2R}_{x_0})}\)^{4-\alpha}\times\\\times
\(\|u\|_{H^1(B^{2R}_{x_0})}+\|v\|_{H^1(B^{2R}_{x_0})}\)^\alpha
\|\psi_{R,x_0}(u-v)\|_{H^{1+\alpha}}^{1-\alpha}\|\psi_{R,x_0}(u-v)\|_{H^{\alpha,12}}^\alpha,
\end{multline}
where the constant $C$ is independent of $R$ and $x_0$.
\end{cor}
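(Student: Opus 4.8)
The plan is to reduce the estimate to the product inequality of Proposition \ref{Lem.v4wHa} by writing the difference $f(u)-f(v)$ as a product of $(u-v)$ with a quartically growing coefficient. First I would use the fundamental theorem of calculus to write
\[
f(u)-f(v)=(u-v)\int_0^1 f'(z_\theta)\,d\theta,\qquad z_\theta:=(1-\theta)v+\theta u .
\]
Multiplying by the cut-off and applying Minkowski's integral inequality to pull the $\theta$-integration outside the $H^\alpha$-norm, it suffices to bound, uniformly in $\theta\in[0,1]$, the quantity $\|f'(z_\theta)\,\psi_{R,x_0}(u-v)\|_{H^\alpha}$. Here the coefficient $h:=f'$ has exactly the structure required by Proposition \ref{Lem.v4wHa}: indeed \eqref{4.f} gives $|f''(s)|\le C(1+|s|^3)$, so $h=f'$ satisfies the growth bound \eqref{ap.h}, and $h(0)=f'(0)=0$ by assumption, while $w:=\psi_{R,x_0}(u-v)$ plays the role of the second factor.

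The key technical point is the localization of the nonlinear coefficient. Since $w=\psi_{R,x_0}(u-v)$ is supported in $B^{3R/2}_{x_0}$, I would introduce a second cut-off $\tilde\psi_{R,x_0}(x):=\tilde\psi(R^{-1}(x-x_0))$ with $\tilde\psi\equiv1$ on $B^{3/2}_0$ and $\tilde\psi\equiv0$ outside $B^2_0$, so that $\tilde\psi_{R,x_0}\equiv1$ on the support of $w$ and $|\nabla\tilde\psi_{R,x_0}|\le CR^{-1}\le C$ for $R\ge1$. Because $\tilde\psi_{R,x_0}$ equals $1$ wherever $w\neq0$, one has the pointwise identity
\[
f'(z_\theta)\,w=f'\!\left(\tilde\psi_{R,x_0}z_\theta\right)w ,
\]
in which the argument $\tilde\psi_{R,x_0}z_\theta$ is now supported in $B^{2R}_{x_0}$. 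Applying Proposition \ref{Lem.v4wHa} on $\R^3$ with $v$ replaced by $\tilde\psi_{R,x_0}z_\theta$ and $w$ as above then yields
\[
\|f'(z_\theta)w\|_{H^\alpha}\le C\left(1+\|\tilde\psi_{R,x_0}z_\theta\|_{L^{12}}^{4-\alpha}\right)\|\tilde\psi_{R,x_0}z_\theta\|_{H^1}^\alpha\,\|w\|_{H^{1+\alpha}}^{1-\alpha}\|w\|_{H^{\alpha,12}}^\alpha .
\]

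It then remains to convert the global norms of $\tilde\psi_{R,x_0}z_\theta$ into the local norms appearing in \eqref{A.mf}. Since $\tilde\psi_{R,x_0}z_\theta$ is supported in $B^{2R}_{x_0}$ and $|\tilde\psi_{R,x_0}|\le1$, I get $\|\tilde\psi_{R,x_0}z_\theta\|_{L^{12}}\le\|z_\theta\|_{L^{12}(B^{2R}_{x_0})}$, while the product rule together with $|\nabla\tilde\psi_{R,x_0}|\le C$ gives $\|\tilde\psi_{R,x_0}z_\theta\|_{H^1}\le C\|z_\theta\|_{H^1(B^{2R}_{x_0})}$ uniformly in $R\ge1$; using that $z_\theta$ is a convex combination of $u$ and $v$ bounds these by $\|u\|_{L^{12}(B^{2R}_{x_0})}+\|v\|_{L^{12}(B^{2R}_{x_0})}$ and $\|u\|_{H^1(B^{2R}_{x_0})}+\|v\|_{H^1(B^{2R}_{x_0})}$ respectively, with constants independent of $\theta$, $R$ and $x_0$. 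The resulting bound is uniform in $\theta$, so integrating over $\theta\in[0,1]$ produces precisely \eqref{A.mf}. I expect the main obstacle to be exactly this localization step: the fractional norm $H^\alpha$ is nonlocal, so one cannot simply restrict $f'(z_\theta)$ to a ball, and the clean way around this is the observation that replacing $z_\theta$ by the compactly supported $\tilde\psi_{R,x_0}z_\theta$ leaves the product $f'(z_\theta)w$ unchanged — this is what lets me feed a genuinely compactly supported argument into Proposition \ref{Lem.v4wHa} and recover local norms on $B^{2R}_{x_0}$ with constants uniform in $R$ and $x_0$.
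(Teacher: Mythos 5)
Your proof is correct, but it follows a genuinely different route from the paper's. The paper localizes the \emph{whole product} into the bounded domain: using the scaled analogue of \eqref{2.V} it bounds $\|\psi_{R,x_0}(f(u)-f(v))\|_{H^\alpha(\R^3)}$ by $C\|\psi_{R,x_0}(f(u)-f(v))\|_{H^\alpha(B^{2R}_{x_0})}$, then applies the bounded-domain version of the product estimate (Corollary \ref{CorA.V}, which rests on the extension-operator machinery for $H^{s,p}(V)$), and finally passes back to global norms of $\psi_{R,x_0}(u-v)$ via the trivial restriction inequality; uniformity of the constants in $R$ is secured by scaling arguments. You instead never leave $\R^3$: you truncate only the \emph{argument of the coefficient}, observing that $f'(z_\theta)\,\psi_{R,x_0}(u-v)=f'(\tilde\psi_{R,x_0}z_\theta)\,\psi_{R,x_0}(u-v)$ pointwise because $\tilde\psi_{R,x_0}\equiv1$ wherever $\psi_{R,x_0}(u-v)\neq0$, and then feed the compactly supported coefficient directly into the whole-space Proposition \ref{Lem.v4wHa}. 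This is legitimate: the hypotheses on $h=f'$ are verified exactly as in the paper ($f''$ grows at most cubically by \eqref{4.f}, and $f'(0)=0$), and since the only norms of the coefficient appearing in \eqref{a.est.main} are the local-in-nature $L^{12}$ and $H^1$ norms, your conversion $\|\tilde\psi_{R,x_0}z_\theta\|_{L^{12}}\le\|z_\theta\|_{L^{12}(B^{2R}_{x_0})}$ and $\|\tilde\psi_{R,x_0}z_\theta\|_{H^1}\le C\|z_\theta\|_{H^1(B^{2R}_{x_0})}$ (uniform in $R>1$ and $x_0$ thanks to $|\nabla\tilde\psi_{R,x_0}|\le CR^{-1}$) closes the argument without ever invoking fractional Sobolev spaces on bounded domains. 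What each approach buys: the paper's proof reuses its general bounded-domain corollary, which is needed elsewhere in the text anyway, at the price of relying on extension operators and a scaling argument for $R$-uniformity of the comparison constants; your two-cut-off trick sidesteps that machinery entirely and makes the $R,x_0$-uniformity transparent, though it exploits the special circumstance that the fractional norms in \eqref{a.est.main} attach only to the factor $w$, which is already globally defined -- it would not extend as easily to situations where a fractional norm of the coefficient itself had to be localized.
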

\begin{proof} Indeed, using the analogue of estimates \eqref{2.V} for the scaled functions $\psi_{R,x_0}$,
we get
\begin{multline}\label{A.dif}
\|\psi_{R,x_0}(f(u)-f(v))\|_{H^\alpha}\le C\|\psi_{R,x_0}(f(u)-f(v))\|_{H^\alpha(B^{2R}_{x_0})}=\\=
\|\psi_{R,x_0}\int_0^1 f'(\lambda u+(1-\lambda v))(u-v)\,d\lambda\|_{H^\alpha(B^{2R}_{x_0})}\le\\\le
C\int_0^1\|f'(\lambda u+(1-\lambda)v)\psi_{R,x_0}(u-v)\|_{H^\alpha(B^{2R}_{x_0})}\,d\lambda.
\end{multline}
Note that the function $h(u)=f'(u)$ satisfies all assumptions of Proposition \ref{Lem.v4wHa}, so
 we may use \eqref{A.V} to estimate the right-hand side of \eqref{A.dif}. Using also that,
 by the definition of the space $H^\alpha(B^{2R}_{x_0})$,
$$
\|\psi_{R,x_0}u\|_{H^\alpha(B^{2R}_{x_0})}\le \|\psi_{R,x_0}u\|_{H^\alpha(\R^3)},
$$
we get the desired estimate and finish the proof of the corollary.
\end{proof}
We conclude this section by stating one more useful corollary of the key estimate \eqref{A.V}.
\begin{cor}\label{CorA.split} Let the assumptions of Corollary \eqref{CorA.f} hold and let, in addition,
\begin{equation}\label{A.uvw}
u(x)-v(x)=w_1(x)+w_2(x)
\end{equation}
for some functions $w_1$ and $w_2$ satisfying \eqref{A2.vw}. Then the following estimate holds:
\begin{multline}\label{A.mfs}
\|\psi_{R,x_0}(f(u)-f(v))\|_{H^\alpha}\le
C\(1+\|u\|_{L^{12}(B^{2R}_{x_0})}+\|v\|_{L^{12}(B^{2R}_{x_0})}\)^{4-\alpha}\times\\\times
\(\|u\|_{H^1(B^{2R}_{x_0})}+\|v\|_{H^1(B^{2R}_{x_0})}\)^\alpha\times\\\times
\(\|\psi_{R,x_0}w_1\|_{H^{1+\alpha}}^{1-\alpha}\|\psi_{R,x_0}w_1\|_{H^{\alpha,12}}^\alpha+
\|\psi_{R,x_0}w_2\|_{H^{1+\alpha}}^{1-\alpha}\|\psi_{R,x_0}w_2\|_{H^{\alpha,12}}^\alpha\),
\end{multline}
where the constant $C$ is independent of $R$ and $x_0$.
\end{cor}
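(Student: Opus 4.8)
The plan is to follow the proof of Corollary~\ref{CorA.f} almost verbatim, inserting the splitting \eqref{A.uvw} at the very last moment so that the single factor $\psi_{R,x_0}(u-v)$ is replaced by the sum $\psi_{R,x_0}w_1+\psi_{R,x_0}w_2$. First I would localize via the scaled analogue of \eqref{2.V}, which gives
\[
\|\psi_{R,x_0}(f(u)-f(v))\|_{H^\alpha}\le C\|\psi_{R,x_0}(f(u)-f(v))\|_{H^\alpha(B^{2R}_{x_0})},
\]
with $C$ independent of $R$ and $x_0$ by the same scaling argument already used for \eqref{A.mf}. Then I would write the difference in integral form,
\[
f(u)-f(v)=\int_0^1 f'(\lambda u+(1-\lambda)v)(u-v)\,d\lambda,
\]
and substitute $u-v=w_1+w_2$ inside the integrand, so that the quantity to be estimated becomes the $H^\alpha(B^{2R}_{x_0})$-norm of $f'(\lambda u+(1-\lambda)v)\,\psi_{R,x_0}(w_1+w_2)$.

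Next I would apply the triangle inequality in $H^\alpha(B^{2R}_{x_0})$ to break this into the $w_1$- and $w_2$-contributions and estimate each separately by the key Kato--Ponce bound \eqref{A.V} of Corollary~\ref{CorA.V}. Here the role of the function $h$ in Proposition~\ref{Lem.v4wHa} is played by $f'$: indeed \eqref{4.f} yields $f''(u)=20u^3+h''(u)$ with $|h''(u)|\le C(1+|u|^q)$, $q<3$, hence $|f''(u)|\le C(1+|u|^3)$, which is precisely the growth hypothesis \eqref{ap.h}, while $f'(0)=0$ is assumed in Corollary~\ref{CorA.f}. Applying \eqref{A.V} with $w=\psi_{R,x_0}w_1$ and then $w=\psi_{R,x_0}w_2$, and using $\|\psi_{R,x_0}w_i\|_{H^\alpha(B^{2R}_{x_0})}\le\|\psi_{R,x_0}w_i\|_{H^\alpha(\R^3)}$ exactly as in \eqref{A.dif}, produces the two summands in the last factor of \eqref{A.mfs}.

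It then remains to control the $\lambda$-dependent prefactor uniformly. Since $\lambda u+(1-\lambda)v$ is a convex combination, one has $\|\lambda u+(1-\lambda)v\|_{L^{12}(B^{2R}_{x_0})}\le\|u\|_{L^{12}(B^{2R}_{x_0})}+\|v\|_{L^{12}(B^{2R}_{x_0})}$, and likewise for the $H^1(B^{2R}_{x_0})$-norm, uniformly in $\lambda\in[0,1]$; hence the coefficient $(1+\|\cdot\|_{L^{12}})^{4-\alpha}\|\cdot\|_{H^1}^\alpha$ emitted by \eqref{A.V} is bounded by the first two factors on the right-hand side of \eqref{A.mfs}, and the integral $\int_0^1 d\lambda$ contributes only a harmless constant. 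I do not expect any genuine obstacle: the statement is simply the additive-in-$w$ refinement of \eqref{A.mf}, obtained by splitting the target factor rather than the argument of $f'$, and the only points requiring care are the $R$-uniformity of the localization constant (already settled for \eqref{A.mf} by scaling) and the uniform-in-$\lambda$ bound on the prefactor, both of which are routine.
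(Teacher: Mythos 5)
Your proof is correct and follows essentially the same route as the paper, which likewise inserts the splitting \eqref{A.uvw} into the right-hand side of \eqref{A.dif} and applies \eqref{A.V} to each of the two resulting terms separately. The additional details you supply (the verification that $h=f'$ satisfies \eqref{ap.h} and the uniform-in-$\lambda$ bound on the prefactor by convexity) are exactly the routine checks the paper leaves implicit.
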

Indeed, to verify \eqref{A.mfs}, we just need to put \eqref{A.uvw} into the right-hand side of \eqref{A.dif}
 and apply estimate \eqref{A.V} to every of two obtained terms separately.

\section{Proof of commutator estimates}\label{sA}
In this Appendix we give the brief proof of estimates \eqref{2.com1} and \eqref{2.com2} stated
 in Proposition \ref{Prop2.com}, see also \cite{S.ieqwv1h} for the analogous proof
 in the particular case $p=2$. To this end, we will use the following formula for fractional powers:
 \begin{equation}
(1-\Dx)^\alpha u:=\frac1{\Gamma(-\alpha)}\int_0^\infty (e^{-t(1-\Dx)} u-u)\frac{dt}{t^{1+\alpha}}
 \end{equation}
 for $\alpha\in(0,1)$, see e.g., \cite{Tri}. Remind that in our case $\alpha=s/2\in(0,1/2)$. Let now
  $u\in C_0^\infty(\R^n)$ and $\psi$ be either also from
  $C_0^\infty(\R^n)$ or $\psi=\phi_{\eb,x_0}$. Then
\begin{multline*}
  \psi(1-\Dx)^{\alpha}u-(1-\Dx)^{\alpha}(\psi u)=\\=\frac1{\Gamma(-\alpha)}
  \int_0^\infty U(t)\frac{dt}{t^{1+\alpha}}=\frac{-1}{\Gamma(1-\alpha)}
  \int_0^\infty\frac{\Dt U(t)}{t^\alpha}\,dt,
\end{multline*}
where the function $U(t):=\psi e^{-t(1-\Dx)} u-e^{-t(1-\Dx)}(\psi u)$
solves the following parabolic problem:
\begin{equation}\label{2.2heat}
\begin{cases}
\Dt U+(1-\Dx)U=-2\Nx\psi\Nx\bar u-\Dx\psi\bar u:=h_\psi(t),\ \ U\big|_{t=0}=0,\\
\Dt\bar u+(1-\Dx)\bar u=0,\ \ \bar u\big|_{t=0}=u.
\end{cases}
\end{equation}
Note also that
$$
\|h_\psi(t)\|_{L^p}\le C_{\psi,x_0}\|\bar u(t)\|_{W^{1,p}_{\phi_{\eb,x_0}}}
$$
and in the case $\psi=\phi_{\eb,x_0}$ we have $C_{\psi,x_0}=C|\eb|$. Moreover, applying
 the weighted parabolic smoothing property to the second equation of \eqref{2.2heat}, we arrive at
 $$
t^{1/2}\|h_\psi(t)\|_{L^p}\le Ce^{-\kappa t}\|u\|_{L^p_{\phi_{\eb,x_0}}}
 $$
 for some positive $\kappa$ (the weighted smoothing property follows immediately from the
  classical non-weighted one and the trick with multiplication operator $T_{\eb,x_0}$). Thus,
  for every $\delta\in(0,1)$, we have
$$
\int_0^\infty e^{\kappa t}\|h_{\psi}(t)\|_{L^p}^{2-\delta}\,dt\le C_{p,\delta}
C_{\psi,x_0}^{2-\delta}\|u\|_{L^p_{\phi_{\eb,x_0}}}^{2-\delta}.
$$
It is well-known that the heat equation
$$
\Dt W+(1-\Dx)W=h(t),\ \ W\big|_{t=0}=0
$$
possesses the following anisotropic $L^q(L^p)$-regularity estimate
$$
\|\Dt W\|_{L^q(R_+,L^p(\R^3))}+\|W\|_{L^q(\R_+,H^{2,p}(\R^3))}\le C_{p,q}\|h\|_{L^q(\R_+,L^p(\R^3))}
$$
for all $1<p,q<\infty$, see e.g., \cite{LMR}.
Applying this regularity result to the first equation
of \eqref{2.2heat}, we arrive at
$$
\int_0^\infty e^{\kappa t}\|\Dt U(t)\|_{L^p}^{2-\delta}\,dt\le C_{p,\delta}
C_{\psi,x_0}^{2-\delta}\|u\|_{L^p_{\phi_{\eb,x_0}}}^{2-\delta}
$$
and finally
\begin{multline*}
\|\psi(1-\Dx)^{\alpha}u-(1-\Dx)^{\alpha}(\psi u)\|_{L^p}\le
C\int_0^\infty\|\Dt U(t)\|_{L^p}\frac{dt}{t^\alpha}\le\\\le
 \(\int_0^\infty e^{\kappa t}\|\Dt U(t)\|_{L^p}^{2-\delta}\,dt\)^{\frac1{2-\delta}}
 \(\int_0^\infty\frac {e^{-\kappa_1t}dt}{t^{\alpha\frac{2-\delta}{1-\delta}}}\)^{\frac{1-\delta}{2-\delta}}\le\\\le
 C_pC_{\psi,x_0}\|u\|_{L^p_{\phi_{\eb,x_0}}}
\end{multline*}
if $\delta>0$ is small enough that $\alpha\frac{2-\delta}{1-\delta}<1$ and the
commutator estimates are proved.


\end{document}